\newcolumntype{C}{>{\centering\arraybackslash}X}
\newtheorem{lemma}{Lemma}[section]
\newtheorem{proposition}{Proposition}[section]
\newtheorem{cor}[lemma]{Corollary}
\newtheorem{theorem}{Theorem}
\newtheorem*{theorem*}{Theorem}
\theoremstyle{definition}
\newtheorem{hyp}{Assumption}
\newtheorem{conjecture}{Conjecture}[section]
\newtheorem{rem}{Remark}[section]
\newtheorem{definition}{Definition}[section]
\newcommand{\N}{\mathbb{N}}
\renewcommand{\c}{{\bf  c}}
\newcommand{\E}{\mathbb{E}}
\renewcommand{\P}{\mathbb{P}}
\newcommand{\R}{\mathbb{R}}
\newcommand{\Mc}{\mathcal{M}}
\newcommand{\M}{\mathbf{M}}
\newcommand{\la}[1]{\overleftarrow*{#1}}
\newcommand{\diff}{\mathrm{d}}
\newcommand{\vl}{g_{\lambda_\infty+\xi}}
\renewcommand{\epsilon}{\varepsilon}
\newcommand{\eps}{\epsilon}
\newcommand{\vep}{\epsilon}
\newcommand{\hep}{{\hat{\epsilon}}}
\newcommand{\indic}{\mathbf{1}}
\DeclarePairedDelimiter{\abs}{\vert}{\vert}
\DeclarePairedDelimiter{\Angle}{\langle}{\rangle}
\DeclarePairedDelimiter{\norm}{\lVert}{\rVert}
\renewcommand{\angle}{\Angle}
\DeclareMathOperator{\Card}{Card}
\DeclareMathOperator{\Leb}{Leb}
\newcommand{\Dom}{{\Omega}}
\renewcommand{\t}{\hat{t}}
\newlength{\affilskip}
\title{Convergence of spatial branching processes to $\alpha$-stable
CSBPs: Genealogy of semipushed fronts}
\author[1]{Félix Foutel-Rodier}
\author[2]{Emmanuel Schertzer}
\author[3]{Julie Tourniaire}
\affil[1]{Université Paris Cité, CNRS, MAP5, F-75006 Paris, France \vspace{\affilskip}}
\affil[2]{Faculty of Mathematics, University of Vienna, Oskar-Morgenstern-Platz 1, 1090 Wien, Austria\vspace{\affilskip}}
\affil[3]{Université Marie et Louis Pasteur, CNRS, LmB (UMR 6623), F-25000 Besançon, France.}
\date{\today}
\begin{document}

\maketitle

\begin{abstract}
    We consider an inhomogeneous branching diffusion on an unbounded
    domain of $\R^d$ and propose a simple condition under which we
    expect the size process (i.e., the number of particles) and the
    genealogy of the system to converge to those of an
    $\alpha$-stable continuous-state branching process, with
    $\alpha\in(1,2)$. This condition can be seen as the spatial analogue
    of the classical assumption that the tail of the offspring
    distribution of a Galton--Watson process is regularly varying.

    We make a first step towards establishing this result by providing a
    set of sufficient conditions under which the branching diffusion,
    seen as a random marked metric measure space that captures both the
    positions and the genealogical structure of the population, converges to
    an $\alpha$-stable genealogy. These conditions are based on the
    convergence of the moments of the process, which can be efficiently
    computed via recursive formulas.

    We apply this framework to a one-dimensional branching Brownian motion with
    inhomogeneous branching rate and negative drift. This model was introduced by
    Tourniaire as a toy model to investigate the internal dynamics of fluctuating
    pushed fronts. By using our general set of conditions we prove convergence of 
    the genealogy of the process in the semipushed regime, which was
    conjectured to hold by Birzu, Hallatschek, and Korolev.
\end{abstract}

\section{Introduction}

\subsection{Context}

\paragraph{Branching processes at large scales.}
One of the achievements of the theory of branching processes is the
classification of possible universality classes of Galton--Watson models.
These universality classes drive the behaviour at large scales of many of
their aspects, including the fluctuation of the population size
\cite{lamperti1967limit, grimvall1974convergence}, the genealogy
at a given time \cite{fleischmann1977structure, yakymiv1981reduced,
    popovic2004}, and the entire branching structure
\cite{aldous1993continuum, le1998branching, LeGall2002}.
These possible limits are characterised in terms of a function $\psi$
called the \emph{branching mechanism}. When the population size has
finite mean, a branching mechanism is of the form
\begin{equation}\label{eq:branching-mechanism}
    \forall \theta \ge 0,\quad \psi(\theta) = a \theta + \frac{b \theta^2}{2} +
    \int_{(0,\infty)} \big( e^{-\theta z} - 1 + z\theta \big) \Lambda(\diff z),
\end{equation}
for some $a \in \R$, $b \ge 0$ and some measure $\Lambda$ on $(0,
    \infty)$ satisfying
\[
    \int_0^\infty (z \wedge z^2) \Lambda(\diff z) < \infty.
\]

In the present article, we will be particularly interested in the class
of critical $\alpha$-stable branching processes with $\alpha \in (1,2)$,
which correspond to the branching mechanism given by $\psi(\theta) = c
    \theta^\alpha$ for some $c > 0$. The importance of this class lies in the
fact that they correspond to the scaling limits of critical Galton--Watson
processes with heavy (power-law) tail, that is, when the offspring
distribution $X$ satisfies
\begin{equation}\label{eq:heavy-tail-dist}
    \P(X > z) \sim \frac{c}{z^{\alpha}},\qquad \text{as $z \to \infty$,}
\end{equation}
for some $c > 0$. The following result, which can be found in
\cite{zolotarev1957more}, makes this claim rigorous. See the forthcoming
Section~\ref{sec:csbps_intro} for the definition of a continuous-state
branching process.

\begin{proposition} \label{prop:GW}
    Let $(Z_t)_{t \ge 0}$ be a Galton--Watson process in continuous time
    with heavy-tailed offspring distribution \eqref{eq:heavy-tail-dist},
    for $\alpha \in (1, 2)$. Then, letting $\gamma = 1 / (\alpha-1)$,
    there exist $C, C' > 0$ such that 
    \begin{equation} \label{eq:survProbGW}
        \P( Z_{tN} > 0 ) \sim 
        \frac{C'}{(Nt)^{\gamma}},\qquad
        \Big( \frac{Z_{tN}}{N^\gamma} \mid Z_{tN} > 0\Big)
        \implies \bar{\Xi}_t,\qquad \text{as $N \to \infty$,}
    \end{equation}
    where $\bar{\Xi}_t$ follows the entrance law at time $t$ of a
    continuous-state branching process with branching mechanism
    $\psi(\theta) = C \theta^\alpha$. 
\end{proposition}

The existence of a heavy tail \eqref{eq:heavy-tail-dist} indicates
that the dynamics of the population are primarily influenced by rare and
exceptional reproductive events. During such an event, a particle
generates a very large offspring, leading in the limit to a jump in the
population size and to a corresponding branching point with infinite degree
in the genealogy. This stands in sharp contrast to situations when $X$
has finite variance ($\psi(\theta) = c \theta^2$). In this case, the
population evolves through the incremental contributions of each
individual, leading to Feller's diffusion in the limit ($a = 0$ and
$\Lambda = 0$ in \eqref{eq:branching-mechanism}) with a binary genealogy.

\paragraph{Emergence of multiple mergers in spatial models.}
Recent findings suggest that the universality class of $\alpha$-stable
branching processes extends beyond Galton--Watson models and that they
can emerge as the scaling limit of branching processes with a spatial
component. In particular, Feller's diffusion and its genealogy have been
shown to describe the scaling limits of branching processes with a wide
variety of spatial structures \cite{hering1971critical, hering1974limit,
    asmussen1983branching, Miermont2008, powell19, horton2020stochastic,
    harris2020stochastic, harris2022yaglom, gonzalez2022asymptotic, foutel22,
    schertzer2023spectral, boenkost2026powerlawscalingeffectivepopulation}.
More surprisingly, Neveu's continuous-state branching process -- which
corresponds to $\alpha = 1$ and $\psi(\theta)= \theta \log \theta$ --
and $\alpha$-stable branching processes with $\alpha \in (1,2)$
also emerge as the scaling limits of certain binary branching Brownian
motions with absorption \cite{berestycki13, tourniaire21}. Conversely to
Galton--Watson processes with a heavy tail, the jumps in the limit are no
longer the consequence of rare reproduction events, but are driven by the
spatial structure of the system. In this type of models, some particles
manage to make rare excursions away from the origin and to survive for an
exceptionally long time. During this time they produce a large offspring
that accumulates to a jump in the population size at the scaling limit.
Although we recover the universality class of a Galton--Watson model in
the limit, the mechanism underlying the jumps is of a very different
nature: jumps emerge from the spatial structure and not from the
stochasticity in the reproduction.

Altogether, these findings raise the following question: what property of
the spatial structure leads to the emergence of an $\alpha$-stable
branching process? The objective of the present article is to start
tackling this question in the context of branching diffusions. A first
contribution of our work is to propose a simple general criterion
under which we conjecture that a branching diffusion belongs to the
universality class of an $\alpha$-stable branching process, with $\alpha
    \in (1, 2)$. A second contribution is that we confirm this prediction for
the class of branching Brownian motions introduced in \cite{tourniaire21},
by studying the scaling limit of the genealogy of the population seen as
a random metric measure space. This is the first rigorous result about
genealogies in this regime. Finally, an additional source of motivation is that
these branching Brownian motion models approximate the front of a noisy
travelling wave. Our conjecture can be seen as an extension of the ideas
of \cite{birzu2018fluctuations, birzu2021genealogical} in this context,
and our result verifies the predictions of \cite{birzu2021genealogical}
regarding the genealogy of the front in the so-called semipushed regime.
We refer to \cite{berestycki13, tourniaire21, schertzer2023spectral} for
background, where the connection with noisy traveling waves is discussed
at length, or to Section~\ref{sec:noisyFront} for a shorter account.

\subsection{Universality classes of branching diffusions}
\label{sec:branchingDiffusion}

\subsubsection{Criticality}
Consider an open domain $\Dom_\infty \subseteq \mathbb{R}^d$, $d \ge 1$.
Let $(X_t)_{0 \le t < \tau_\infty}$ be a diffusion process taking values
in $\bar{\Dom}_\infty$ and killed upon hitting the boundary $\partial \Dom_\infty$.
That is, before the killing time
$\tau_\infty = \inf\{t > 0 : X_t \in \partial \Dom_\infty\}$,
the process $(X_t)$ evolves according to the stochastic differential equation
\begin{equation}
    \label{eq:SDE}
    \diff X_t = d(X_t)\,\diff t + \sigma(X_t)\,\diff W_t,
    \qquad 0 \le t < \tau_\infty,
\end{equation}
where $(W_t)$ is an $m$-dimensional Brownian motion and
$d \in C^{2}(\bar{\Dom}_\infty, \mathbb{R}^d)$ and
$\sigma \in C^{2}(\bar{\Dom}_\infty, \mathbb{R}^{d \times m})$.
We further assume that $b$ and $\sigma$ are locally bounded functions and
that the diffusion matrix $\sigma \sigma^{*}$ is uniformly positive
definite on $\bar{\Dom}_\infty$. Alternatively, this diffusion can be
defined as the unique solution to the generalised martingale problem
associated to the differential operator
\begin{equation}
    \label{eq:op_L}
    \mathcal{L}_{\infty} = \frac{1}{2}
    \sum_{i,j=1}^d (\sigma \sigma^*)_{ij}(x)\partial_{x_i} \partial_{x_j}
    \ + \ \sum_{i=1}^d d_i(x) \partial_{x_i}
\end{equation}
on $\bar{\Dom}_\infty$ (we refer the reader to~\cite[Section 1.13]{pinsky95} 
for a rigorous definition of this martingale problem). We write
$\tilde{\mathcal{L}}_\infty$ for the adjoint operator.

A (dyadic) branching diffusions killed at the boundary of $\Dom_\infty$
is now defined as follows. Suppose that for every $y\in
\bar{\Dom}_\infty$, we are given a non-negative branching rate $r(y)$
The process starts with one particle at $x\in \Dom_\infty$. This particle
moves according to \eqref{eq:SDE} until it reaches the boundary $\partial
\Dom_\infty$ where it is killed (i.e.~sent to the graveyard state). When
at position $y$, the particle branches at rate $r(y)$ in which case it is
replaced by two daughter particles at the same position $y$. Each of
these offspring then evolves independently, following the same stochastic
dynamics as their parent. We denote by $\mathcal{N}_{t,\infty}$ the set
of all particles alive at time $t$ in the branching diffusion and define
the number of particles as $Z_{t,\infty} \coloneqq
\abs{\mathcal{N}_{t,\infty}}$. For a particle $v\in \mathcal{N}_t$ and
$s\leq t$, we write $X_v(s)$ for the position of the unique ancestor of
$v$ alive at time $s$. We write $\P_x$ for the law of the branching
diffusion started from a single particle at $x$ and $\E_x$ for the
associated expectation.

We now introduce the notion of criticality that will be used throughout
the paper. Define the transition measure $p_t(x,\diff y)$ by
\[
    p_t(x,B) =
    \mathbb{E}_x\left[\exp\left(\int_0^t r(X(s))\diff s\right);X(t)\in B\right]\quad
    \text{for measurable $B\subset \Dom_\infty$}.
\]
We say that the operator $\mathcal{L}_\infty+r$ possesses
a Green's measure on $\Dom_\infty$ if
\[
    \int_0^\infty p_t(x,B)\diff t<\infty,
\]
for all $x\in \Dom_\infty$ and for all bounded measurable $B\subset \Dom_\infty$ such that $\bar{B}\subset \Dom_\infty$.
\begin{definition}[Criticality, \cite{pinsky95}, Section~4.3]
    \label{def:criticality}
    The operator $\mathcal{L}_\infty+r$ is critical if:
    \begin{enumerate}
        \item it does not possess a Green's measure on $\Dom_\infty$; and
        \item there exists a positive harmonic function $h_\infty\in
                  C^{2}(\Dom_\infty)$, in the sense that
              \begin{equation} \label{eq:defHarmonic}
                  (\mathcal{L}_\infty+r)h_\infty=0 \quad \text{on $\Dom_\infty$}
                  \quad  \text{and}
                  \quad h_\infty=0\quad \text{on $\partial \Dom_\infty$.}
              \end{equation}
    \end{enumerate}
\end{definition}

The adjoint $\tilde{\mathcal{L}}_\infty+r$ of a critical operator is also
critical \cite[Theorem~4.3.3]{pinsky95} and hence there exists a
positive function $\tilde{h}_\infty \in C^{2}(\Dom_\infty)$ such that
\[
    (\tilde{\mathcal{L}}_\infty+r)\tilde h_\infty=0 \quad \text{on $\Dom_\infty$}
    \quad  \text{and} \quad
    \tilde h_\infty=0\quad \text{on $\partial \Dom_\infty$.}
\]
Moreover, $h_\infty$ and $\tilde h_\infty$ are unique up to constant
multiples~\cite[Theorem 4.3.4]{pinsky95}, and $h_\infty$ (resp.~$\tilde
    h_\infty$) is the unique invariant positive function (resp.~invariante
density) for the transition measure $p_t(x,\diff y)$~\cite[Theorem~4.8.6]{pinsky95}.

\subsubsection{The spine}
\label{sect:subdomains}

We now assume that the operator ${\cal L}_\infty+r$ is critical. We can define a stochastic
process $(\zeta_{t})_{t \ge 0}$, the \emph{spine process}, as the
solution to the SDE
\begin{equation}
    \label{eq:def_spine_g}
    \diff \zeta_{t,\infty} = d(\zeta_{t,\infty}) \,\diff t + 
    \sigma \sigma^*(\zeta_{t,\infty})
    \frac{\nabla h(\zeta_{t,\infty})}{h(\zeta_{t,\infty})} \,\diff t
    + \sigma(\zeta_{t,\infty})\,\diff W_t,
    \quad t\geq 0,
\end{equation}
Equivalently, it is defined as the unique solution to the generalised
martingale problem associated to the differential operator
$({\cal L}_{\infty}+r)(h_\infty \cdot)/h_\infty$. See
e.g.\ \cite{pinsky95}, Chapter~4. We denote by $q_{t,\infty}(x,\diff y)$ its
transition kernel. It is well known that this kernel can be expressed in terms
of $p_{t,\infty}(x, \diff y)$ via
\begin{equation} \label{eq:kernelSpine}
    q_{t,\infty}(x, \diff y) =
    \frac{h_\infty(y)}{h_\infty(x)}\,p_{t,\infty}(x, \diff y), \quad
    x,y \in \Dom_\infty,
\end{equation}

Intuitively, the spine process describes the position along the ancestral
line (the spine) of a ``typical'' particle in the branching process,
conditioned on eternal survival. The spine process admits a stationary
measure given by
\begin{equation} \label{eq:def_Pi_g}
    \Pi_\infty = h_\infty\,\tilde h_\infty.
\end{equation}
In an unbounded domain, the measure $\Pi_\infty$ might have infinite mass.
We impose that this is not the case. The following terminology is
borrowed from \cite{pinsky95}.

\begin{hyp}[Product $L^1$ criticality] \label{hyp:L1criticality}
    The operator $\mathcal{L}_\infty + r$ is critical and $\int_{\Dom_\infty}
        h_\infty \tilde{h}_\infty < \infty$.
\end{hyp}

The spine process is positive recurrent in that case, see \cite[Section
    4.9]{pinsky95}. Further, from Theorem~4.9.5~(ii) in \cite{pinsky95},
Assumption~1 implies that $\tilde h_\infty$ is also integrable and we
choose the renormalization of $h_\infty$ and $\tilde h_\infty$ such that
\[
    \int_{\Dom_\infty} h_\infty \tilde{h}_\infty = 1,
    \ \ \text{and} \ \
    \int_{\Omega_\infty} \tilde h_\infty = 1.
\]
In particular, $\Pi_\infty$ is the invariant probability distribution for the spine.
Moreover, one can show (see e.g.~\cite[Theorem
    4.9.9]{pinsky95}) that for any bounded measurable function $f$ with
compact support in $\Omega_\infty$,
\begin{equation} \label{eq:cv_PF}
    \lim_{t \to \infty} \int_{\Dom_\infty} p_{t,\infty}(x,y) f(y) \diff y
    = h_\infty(x)\int_{\Dom_\infty}
    \tilde h_\infty(y)f(y)\diff y.
\end{equation}
Intuitively, $h_\infty(x)$ can be thought of as the reproductive value at
$x$, that is, the mean long-term contribution of a particle located at
$x$ to the branching diffusion, while $\tilde h_\infty$ represents the
stable configuration of the system. The limit \eqref{eq:cv_PF} can be
thought of as a generalization of the Perron--Frobenius theorem for
branching diffusions.

\subsubsection{Conjecture}
\label{sect:conjecture}

We can now state the last condition under which we expect a branching
diffusion to belong to the universality class of an $\alpha$-stable
branching process. This condition serves as the spatial analogue of the
heavy-tail condition \eqref{eq:heavy-tail-dist} for critical
Galton--Watson processes.

\begin{hyp}[Spatial power-law tail.]
    \label{hyp:alpha_tail}
    There exist $\alpha \in (1, 2)$ and $c > 0$ such that
    \begin{equation}\label{eq:fundamental-condition}
        \int_{\Dom_\infty}
        \indic_{\{ h_\infty(x) \ge z \}}
        \tilde{h}_{\infty}(x) \diff x
        \sim
        \frac{c}{z^\alpha}, \qquad \text{as $z \to \infty$.}
    \end{equation}
\end{hyp}

Note that this requires the function $h_\infty$ to be unbounded and thus,
by standard theory, the domain $\Dom_\infty$ to be unbounded as well.
Under these assumptions, we expect that the branching diffusion belongs
to the universality class of the corresponding $\alpha$-stable
continuous-state branching process (CSBP).

\begin{conjecture} \label{conj}
    Assume that Assumption \ref{hyp:L1criticality} and \ref{hyp:alpha_tail}
    hold and let $\gamma = 1 / (\alpha - 1)$. If time is rescaled by
    $N$ and the number of particles by $N^\gamma$, the size of the
    branching diffusion and its genealogy converge to those of an
    $\alpha$-stable branching process, with $\alpha \in (1,2)$ given by
    \eqref{eq:fundamental-condition}. Furthermore, the particles are
    asymptotically distributed as i.i.d.\ random variables with
    distribution $\tilde{h}_\infty$.
\end{conjecture}

Let us provide a heuristic justification of this conjecture. See the
forthcoming Figure~\ref{fig:intuition} for a graphical illustration for a
branching Brownian motion. By \eqref{eq:cv_PF}, we expect that an initial
particle at $x$ has on average $h_\infty(x) \tilde{h}_\infty(y)$
descendants located at $y$ at large times. For any $z > 0$, let us
introduce the subdomain
\begin{equation} \label{eq:truncatedDomain}
    \Omega_z \coloneqq \{ x \in \Omega_\infty : h_\infty(x) < z \},
\end{equation}
which can be interpreted as the set of locations from which an initial
particle has less that $z$ descendants, on average at large times.

Let us assume that the number of particles at time $tN$ scales as
$N^\gamma$, with $\gamma = 1/(\alpha-1)$, and denote by
$\bar{Z}_{t,\infty} \coloneqq Z_{tN,\infty} / N^\gamma$ the number of
particles at that time, rescaled by $N^\gamma$. Therefore, under condition
\eqref{eq:fundamental-condition}, we see from \eqref{eq:cv_PF} that the
probability of observing a particle in $\Omega_\infty \setminus \Omega_{z
N^\gamma}$ at time $tN$ should be approximately
\[
    N^\gamma \bar{Z}_{t,\infty}
    \int_{\Omega_\infty \setminus \Omega_{zN^\gamma}} \tilde{h}_\infty(y) \diff y
    \sim
    \bar{Z}_{t,\infty} \frac{c}{N z^\alpha},
    \qquad \text{as $N \to \infty$.}
\]
Recall that $h_\infty(x)$ can be interpreted as the number of descendants at
large times of an initial particle at $x$, so that particles in
$\Omega_\infty \setminus \Omega_{zN^\gamma}$ have an offspring larger
than $z N^\gamma$. After accelerating time by $N$, these heuristics are
consistent with the idea that $\bar{Z}_{t,\infty}$ experiences a jump
larger than $z$ at rate $c \bar{Z}_{t,\infty} / z^\alpha$, which is the
infinitesimal description of an $\alpha$-stable CSBP.

\section{Main results and heuristics}

Since our main results are stated in a technical framework, we begin with
an informal overview of this section.

\begin{enumerate}
    \item In Section \ref{sect:genealogical-structure}, we define the
    genealogy of CSBPs and branching diffusions at a fixed time $t > 0$.
    For CSBPs, the limiting genealogy is constructed as a tree-like
    metric on a continuous space \cite{aldous1993continuum,
    evans1998kingman, evans2006probability}. Informally (see
    Figure~\ref{fig:reduced_process}), it may be viewed as an ultrametric
    tree whose leaves are labeled by an interval.

    \item In Section \ref{sect:moments}, we introduce moments for the
    genealogy of both the limiting CSBP and the branching diffusion.
    Roughly speaking, the moment of order $k$ at time $t$
    corresponds to the (rescaled) number of genealogical trees with
    $k$ leaves and a prescribed structure. We derive recursive
    formulas for these moments and state our first main result
    (Theorem~\ref{thm:main-cv}), which asserts that convergence of
    the moments of a branching diffusion together with a polynomial
    growth condition of the limit entail convergence of the associated
    genealogical structures to an $\alpha$-stable branching process. We
    also obtain a Kolmogorov-type estimate for the survival probability
    of the branching diffusion.

    \item In Section~\ref{sec:BBM}, we apply the preceding results to a
    particle system satisfying Assumptions~\ref{hyp:L1criticality} and
    \ref{hyp:alpha_tail}, for which we establish convergence of the
    genealogy in accordance with Conjecture \ref{conj}. The model is a
    branching Brownian motion with absorption, introduced in
    \cite{schertzer2023spectral} as an extension of \cite{tourniaire21},
    and serves as a toy model for the front of a semipushed (or pushed)
    expansion, which provides a central motivation for this work. Using
    the moment method developed above, we prove convergence of its
    genealogical structure to that of an \(\alpha\)-stable branching process
    (Theorem \ref{th:cv_mmm}). A heuristic for the proof of Theorem
    \ref{th:cv_mmm} is presented in Section \ref{sec:heuristics_BBM}.

    While the proof of the general conjecture for branching diffusions
    remains incomplete, this result provides evidence for
    Conjecture~\ref{conj} in a concrete setting.
\end{enumerate}

\subsection{Genealogical structures}\label{sect:genealogical-structure}

\subsubsection{Genealogical structure of CSBPs}
\label{sec:csbps_intro}

We need to recall several facts about continuous-state branching
processes (CSBPs) before stating our main results. Formally, a CSBP is
defined as a strong Markov process $(\Xi_t)_{t \ge 0}$ on $[0, \infty]$
that satisfies the following branching property:
the sum of two independant copies of the process starting from $x$ and $y$ has the same finite-dimensional distributions as the process starting from $x+y$.

The law of
a CSBP with finite first moment is uniquely characterized by a
branching mechanism $\psi$ as given in \eqref{eq:branching-mechanism}.
The two are connected by a Laplace transform as
\[
    \forall \theta \ge 0,\quad
    \E[ e^{-\theta \Xi_t} \mid \Xi_0 = z] = e^{-z u_t(\theta)},
\]
where $(u_t(\theta))_{t \ge 0}$ is the solution to the
differential equation
\begin{equation} \label{eq:ODECSBP}
    \begin{dcases}
        \partial_t u_t(\theta) = -\psi( u_t(\theta) ) \\
        u_0(\theta) = \theta.
    \end{dcases}
\end{equation}

It is a standard result since the work of Grey \cite{grey1974asymptotic}
that a $\psi$-CSBP can hit $0$ in finite time with positive probability
if and only if its branching mechanism satisfies Grey's condition that
\begin{equation} \label{eq:greyCondition}
    \exists \theta_0 > 0,\quad
    \int_{\theta_0}^\infty \frac{\diff \theta}{\psi(\theta)}
    < \infty, \quad \psi(\theta_0) > 0.
\end{equation}
Under this condition, the probability that the process dies out is given by
\begin{equation} \label{eq:laplaceExponent}
    \forall z, t \ge 0,\qquad \P( \Xi_t = 0 \mid \Xi_0 = z) = e^{-z \bar{u}_t},
    \qquad \bar{u}_t \coloneqq \lim_{\theta \to \infty} u_t(\theta)
\end{equation}
and we can make sense of the process ``started from $0$'' as
the random variable $\bar{\Xi}_t$ whose Laplace transform is
\begin{equation}  \label{eq:laplaceEntrance}
    \forall \theta \ge 0,\quad \bar{u}_t \E[ 1-e^{-\theta \bar{\Xi}_t} ] =
    u_t(\theta).
\end{equation}
We call the distribution of $\bar{\Xi}_t$ the entrance law of the $\psi$-CSBP.

An important genealogical consequence of Grey's condition is that the
population at any time $t > 0$ is descended from a finite number of
ancestors at time $0$, see for instance \cite{bertoin2000Bolthausen}.
In that case, one can construct the genealogy at time $t > 0$ from a
time-inhomogeneous branching process $(Z_{s,t})_{s \in [0,t)}$
called the \emph{reduced process}, as illustrated in
Figure~\ref{fig:reduced_process}. The reduced process is defined by
starting from $Z_{0,t} = 1$ and letting each particle at time $s$ be
replaced by $i \ge 2$ offspring at a rate $r_{i,t-s}$ given implicitly by
\begin{equation} \label{eq:rateReducedCSBP}
    \forall \theta \ge 0,\quad
    \sum_{i \ge 2} r_{i,t-s} \theta^i
    = \theta \Big( \psi'(\bar{u}_{t-s}) - \frac{\psi(\bar{u}_{t-s}) -
        \psi((1-\theta)\bar{u}_{t-s})}{\theta \bar{u}_{t-s}}
    \Big).
\end{equation}
This expression, obtained in \cite[Section~2.7]{LeGall2002}, extends
earlier work of \cite{fleischmann1977structure, yakymiv1981reduced} in the
$\alpha$-stable case.

\begin{figure}
    \centering
    \includegraphics[width=.9\textwidth]{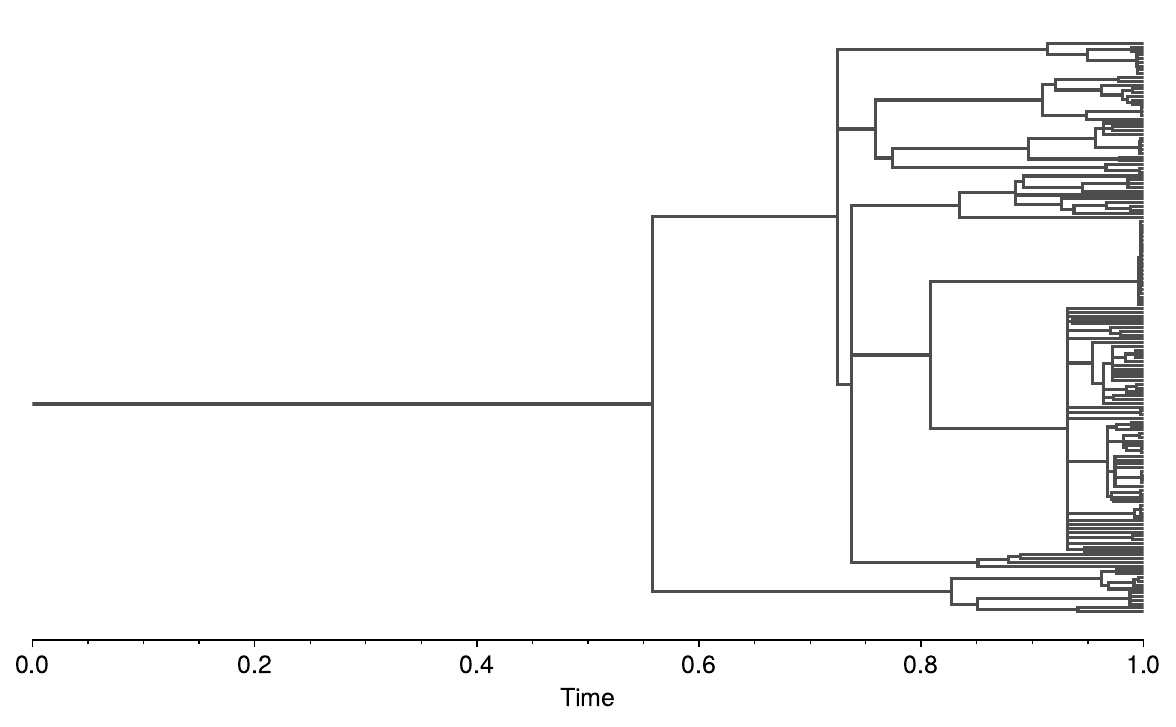}
    \caption{Simulation of the genealogy of an $\alpha$-stable CSBP at
        time $t=1$, with $\alpha = 3/2$. The $\psi$-mm-space corresponds to
        the boundary of this tree, which is the set of leaves of the tree
        (the lineages at time $t=1$) endowed with the tree distance.}
    \label{fig:reduced_process}
\end{figure}

The reduced process is naturally endowed with a tree structure for any
time $s < t$, as illustrated in Figure~\ref{fig:reduced_process}. We want
to define the genealogy at time $t$ of a $\psi$-CSBP as the set of leaves
of that tree. Since the number of lineages in the process explodes as $s
    \uparrow t$, this leaf set is actually continuous and requires a careful
construction. It is a standard idea that such a continuous tree can be
defined as a tree-like metric on a continuous space \cite{aldous1993continuum,
    evans1998kingman, evans2006probability}. In Section~\ref{sec:proofCSBP},
we will define this object rigorously as a random element
$(\mathcal{U}_t, d_t, \vartheta_t)$ of the space of metric measure
spaces, endowed with the Gromov-weak topology of \cite{Greven2009}. We
refer to it as the $\psi$-metric measure space ($\psi$-mm space), see
Definition~\ref{def:psiMmSpace}. The set of labels of the leaves
$\mathcal{U}_t$ can be taken to be an interval with random length, endowed
with the Lebesgue measure $\vartheta_t$. The distance $d_t$ is an
ultrametric distance that gives the coalescence time between pairs of
leaves.

Although the construction of this tree is implicit in
the work of \cite{LeGall2002}, our construction of the measure
$\vartheta_t$ (and thus of the corresponding $\psi$-mm space) appears to
be new. It will enable us to derive recursive moment formulas for
$(\mathcal{U}_t, d_t, \vartheta_t)$ which play a central role in our
approach.

\subsubsection{Genealogical structure of branching diffusions}
\label{sect:gen-structure}

Our convergence results for branching diffusions will be formulated in
terms of the convergence of a random metric space that encodes the
genealogy of the population, augmented with marks that record the spatial
locations of the individuals. Let us define this encoding for branching diffusions.

Recall that $\mathcal{N}_{t,\infty}$ refers to the set  of the particles alive at
time $t$ and that $X_u(t)$ is the location of $u \in \mathcal{N}_{t,\infty}$
at that time. For two individuals $u,v\in\mathcal{N}_{t,\infty}$, we
define their genealogical distance, denoted by $d_{t,\infty}(u,v)$, as
the time to their most recent common ancestor (i.e., their coalescence
time). Furthermore, let $\nu_{t,\infty}$ be the random measure on
$\mathcal{N}_{t,\infty} \times \Omega_\infty$ defined by
\[
    \nu_{t,\infty} = \sum_{u \in \mathcal{N}_{t,\infty}} \delta_{(u, X_u(t))}.
\]
The triple $(\mathcal{N}_{t,\infty}, d_{t,\infty}, \nu_{t,\infty})$ is a
random \emph{marked metric measure space} (mmm-space) in the sense of
\cite{depperschmidt_marked_2011}, see Section~\ref{sec:mmmSpaces} for a
definition. It encodes the genealogy and spatial locations of the
particles at time $t$. We derive our limit by applying the same scaling
as for Galton--Watson processes with heavy tails in
Proposition~\ref{prop:GW}. Namely, we rescale time by $N$ and
population size by $N^\gamma$ and define $(\bar{\mathcal{N}}_{t,\infty},
    \bar{d}_{t,\infty}, \bar{\nu}_{t,\infty})$ as
\begin{equation} \label{eq:rescalingMmmSpace}
    \bar{\mathcal{N}}_{t,\infty} = \mathcal{N}_{tN,\infty},\qquad
    \bar{d}_{t,\infty} = \frac{d_{tN,\infty}}{N},\qquad
    \bar{\nu}_{t,\infty} = \frac{\nu_{tN,\infty}}{N^\gamma}.
\end{equation}
Let us also denote by $\bar{Z}_{t,\infty} = \angle{\bar{\nu}_{t,\infty},
        1}$ the (rescaled) number of particles.

\subsection{Moments and a  convergence criterion}
\label{sect:moments}

\subsubsection{Planar ultrametric matrices}
\label{sec:heuristicMoments}

The moment of order $k$ of a random metric space will be defined as a finite
measure on the space $\mathbb{U}_k$ of $k \times k$ \emph{planar}
ultrametric matrices. Namely, we say that $U = (U_{ij})_{i, j \le k} \in
    \mathbb{U}_k$ is a planar ultrametric matrix if
\begin{equation} \label{eq:planeUltrametric}
    \forall i < \ell < j,\quad U_{i,j} = \max(U_{i,\ell}, U_{\ell,j}).
\end{equation}
Intuitively, one can think of $U \in \mathbb{U}_k$ as the distance
between the $k$ leaves of a planar ultrametric tree. See Figure \ref{fig:notationTree}.

We will enrich this
tree structure with the information about the spatial location of the
leaves, and define the set of \emph{marked} planar ultrametric matrices
as $\mathbb{U}^*_k \coloneqq \mathbb{U}_k \times \Omega^k_\infty$.
For every $i\in[k]$, it will be convenient to introduce the map $X_i
    \colon \mathbb{U}^*_k \to \Omega_\infty$ that gives the mark of the
$i$-th leave of a marked ultrametric matrix.

In order to introduce the planar moments of a $\psi$-mm space in
Section~\ref{sec:recursion-moments}, we will need some preliminary
notation illustrated in Figure~\ref{fig:notationTree}. Informally,
cutting the tree encoded by $U \in \mathbb{U}_k$ at its deepest
branchpoint splits $U$ into several subtrees which leads to the following
definitions. Recall that a \emph{composition} of $k$ is a vector made of
positive integers summing up to $k$.

\begin{figure}[t]
    \centering
    \includegraphics[width=.6\textwidth]{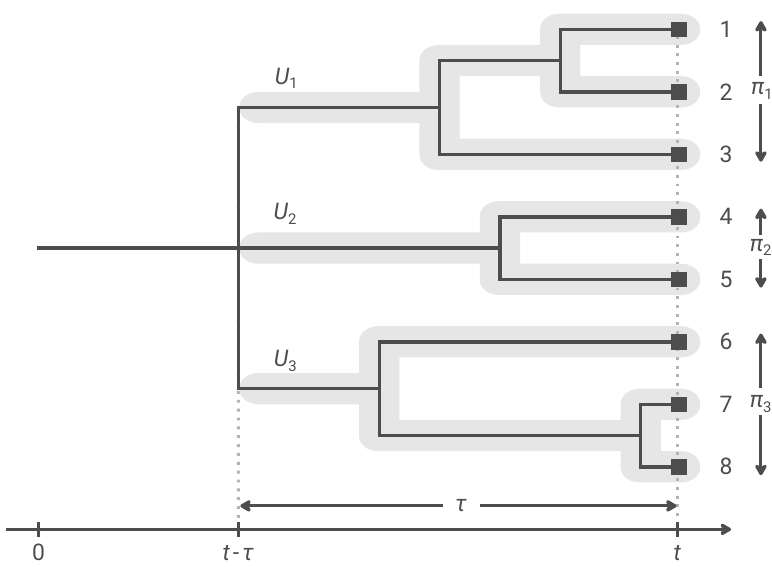}
    \caption{Illustration of Definition~\ref{def:composition}. If the matrix
        $U$ gives the pairwise distances between the leaves of the tree,
        cutting the tree at distance $\tau(U)$ from the leaves splits it into
        three subtrees indicated in grey, corresponding to the partition of
        the leaves $\pi^s(U) = \{ \{1,2,3\}, \{4,5\}, \{6,7,8\}\}$. The
        corresponding composition is $c^s(U) = (3,2,3)$.}
    \label{fig:notationTree}
\end{figure}

\begin{definition}[Decomposition of planar ultrametric matrices]
    \label{def:composition}
    Fix a tree $U \in \mathbb{U}_k$.
    \begin{itemize}
        \item Let $\tau(U) = \max_{i,j \le k} U_{ij}$ be the depth of the first
              branch point of $U$.
        \item Let $\pi(U)$ be the partition of $\{1, \dots, k\}$ such
              that $i \sim j$ iff $U_{ij} < \tau(U)$. The planarity of $U$
              implies that each block of $\pi(U)$ is made of consecutive
              integers and the blocks can be ordered increasingly. We let
              $\pi_p(U)$ be the $p$-th block.
        \item Let $\abs{c(U)}$ be the number of blocks of $\pi(U)$
              and let $c(U) = (c_p(U))_p$ be the composition of $k$
              given by $c_p(U) = \abs{\pi_p(U)}$.
        \item For each $p \le \abs{c(U)}$, let $U_p(U)$ be the
              restriction of $U$ to the $p$-th block $\pi_p(U)$. We will
              often write $U_p \equiv U_p(U)$ to ease the notation.
    \end{itemize}
    All those notations are directly extended to the set of marked
    matrices $\mathbb{U}^*_k$. For instance, for any marked matrix
    $U^* = (U, (x_i)_{i=1}^k) \in \mathbb{U}_k^*$, we will write
    $\tau(U^*) \coloneqq \tau(U)$ for the deepest branching point,
    $U^*_p(U^*)$ for the $p$-th marked subtree at depth $s$. Finally, as
    a shorthand notation, we will write $U^*_p \equiv U_p^*(U^*)$.
\end{definition}

Having introduced the necessary notation, we now define the
planar moments of  branching diffusions and of $\psi$-mm spaces.

\subsubsection{Spatial cutoff and planar moments of a branching diffusion}
\label{sect:cut-off}

The convergence of the genealogy in the forthcoming
Theorem~\ref{th:cv_mmm} will follow from a method of moments for random
marked metric measure spaces as in \cite{foutel22}. A caveat to
using moments in our context is that the target metric space -- the
$\alpha$-stable genealogy -- has infinite variance due to rare
reproductive events. In Section~\ref{sect:conjecture}, we have already
argued that these events correspond to rare excursions of particles in
regions with high reproductive value. In order to recover finite moments,
we will introduce a spatial cutoff by removing all particles that exit an
appropriate subdomain  $\Omega\subseteq \Omega_\infty$. 
This idea will be expanded further in Section
\ref{sec:sufficient_condition}, and see the forthcoming
Figure~\ref{fig:intuition} for a graphical illustration.

Let $\Omega\subseteq \Omega_\infty$ be a \emph{bounded} domain. We define
another branching diffusion by killing particles upon hitting the
boundary of $\Omega$. We will refer to this process as the branching
diffusion on $\Omega$ and we denote by ${\mathcal{L}}_\Omega$ and
$\tilde{\mathcal{L}}_\Omega$ the corresponding operator in
\eqref{eq:op_L} and its dual. Since $\Omega$ is assumed to be bounded, we
have the following classical result on elliptic operators.

\begin{proposition}[Theorem 4.7.1 \cite{pinsky95}]\label{prop:crit-finite}
    If $r$ is bounded, there exists a unique $w_\Omega \in \R$ such
    that ${\cal L}_\Omega + r + w_\Omega$ is critical on $\Omega$.
\end{proposition}

We will denote by $h_\Omega$ and $\tilde h_\Omega$ two  harmonic functions of the operator ${\cal
            L}_{\Omega}+r+w_{\Omega}$ and its dual. We define $\zeta_t \equiv \zeta_{t,\Omega}$ to
be the spine on $\Omega$ defined as in \eqref{eq:def_spine_g} replacing
$h_\infty$ by $h_\Omega$, i.e., the spine $\zeta$
is given by the $h$-transform
\begin{equation}
    \label{eq:spine-cut}
    \diff \zeta_{t} = d(\zeta_{t}) \,\diff t + \sigma \sigma^* (\zeta_{t})
    \frac{\nabla h_\Omega(\zeta_{t})}{h_\Omega(\zeta_{t})} \,\diff t
    + \sigma(\zeta_{t})\,\diff W_t,
    \quad t\geq 0.
\end{equation}
For a fixed $\Omega$, we will often remove the $\Omega$ subscript in the
absence of ambiguity so that quantities without the 
and write for instance ${\cal L} \equiv {\cal L}_\Omega$ and $h \equiv
h_\Omega$ in order to ease the notation. The quantities such as ${\cal
N}_t \equiv {\cal N}_{t,\Omega}$ or $\nu_{t}\equiv \nu_{t,\Omega}$ with
no infinite subscript will refer to the analog quantities of Section
\ref{sect:gen-structure} for the branching diffusion on $\Omega$.

With the notation above, we define the planar moment of order $k$ of the
branching diffusion as the measure $\mathbf{M}^{k,t}_{x} \equiv
    \mathbf{M}^{k,t}_{x,\Omega}$ on $\mathbb{U}^*_k$ such that
\begin{equation} \label{eq:biasedMomentBBM}
    \mathbf{M}^{k,t}_{x}[F]
    =
    \frac{1}{h(x)}
    \E_x\bigg[ \sum_{u_1 < \dots < u_k \in \mathcal{N}_{t}}
        F\big(
        d_t(\mathbf{u}), X_{\mathbf{u}}(t)
        \big) \prod_{i=1}^k h(X_{u_i}(t))
        \bigg],
\end{equation}
for any measurable $F \colon \mathbb{U}^*_k \to \R_+$, and where
\[
    \mathbf{u} = (v_i)_{i \le k},\qquad
    X_{\mathbf{u}}(t) = (X_{u_i}(t))_{i \le k},\qquad
    d_t(\mathbf{u}) = \big( d_t(u_i, u_j) \big)_{i,j \le k}.
\]
Informally, the moment of order $k$ counts the number of trees with $k$
leaves with a given genealogical structure at time $t$ in the population.
Note that in this definition, a particle at $x$ is sampled with a
probability proportional to $h(x)$. Finally, our convergence result will
require that time is rescaled by $N$ and the number of particles by
$N^\gamma$, recall Proposition~\ref{prop:GW}. For that purpose, we define
the rescaled moment of the branching diffusion as 
\begin{equation}\label{eq:scaled_moments}
    \hat{\mathbf{M}}^{k,t}_{x}\big[F(U, (X_i)_i) \big]
    = \frac{1}{N^{\gamma(k-1)}} \mathbf{M}^{k,Nt}_{x}\big[
        F\big( \tfrac{U}{N}, (X_i)_i \big)
        \big].
\end{equation}

\subsubsection{Planar moments of a \texorpdfstring{$\psi$}{psi}-mm space}
\label{sec:recursion-moments}

Assume that $\psi$ is a branching mechanism as in
\eqref{eq:branching-mechanism}, with $a \in \mathbb{R}$, $b \ge 0$, and
$\Lambda$ satisfying a finite exponential moment condition, namely
\begin{equation} \label{eq:finiteExpMoment}
    \exists \eta > 0,\qquad
    \int_{(0, \infty)} \big( e^{\eta z} - 1\big) \Lambda(\mathrm{d} z) < \infty.
\end{equation}
Let $(\mathcal{U}_t, d_t, \vartheta_t)$ denote the associated $\psi$-mm
space as in Section~\ref{sect:genealogical-structure} (or see
the forthcoming Definition~\ref{def:psiMmSpace}). In
contrast to branching diffusions, which are equipped with the
Ulam--Harris labeling and thus carry a natural planar structure,
$\psi$-mm spaces are not planar objects. As a result, planar moments
cannot be defined directly and require additional care. We proceed in two
steps. First, define the $k$-th \emph{unplanar moment} at time $t$ as the
measure on $\mathbb{R}_+^{k \times k}$ whose integral against a
measurable test function $F \colon \mathbb{R}_+^{k \times k} \to
    \mathbb{R}_+$ is given by
\[
    \mathbb{E}\bigg[
        \int_{\mathcal{U}_t^k}
        F\big( d_t(\mathbf{u}) \big)\, \vartheta_t^{\otimes k}(\mathrm{d} \mathbf{u})
        \bigg],
    \qquad \mathbf{u} = (u_i)_{1 \le i \le k},\quad
    d_t(\mathbf{u}) = (d_t(u_i, u_j))_{1 \le i,j \le k}.
\]
Secondly, this unplanar moment can be obtained by summing over all
permutations of the leaves of another measure $\hat{\mathcal{M}}^{k,t}$
on the space of planar matrices $\mathbb{U}_k$. This family of measures,
that we name the $k$-th \emph{planar moment} of a $\psi$-mm space, is
defined as follows.

\begin{definition}[Planar moments of $\psi$-mm spaces]
    \label{def:planarMoments}
    Let $\psi$ be a branching mechanism defined in
    \eqref{eq:branching-mechanism}, with $a \in \R$, $b \ge 0$ and
    $\Lambda$ having a finite exponential moment in the sense of
    \eqref{eq:finiteExpMoment}. The $k$-th planar moment of the $\psi$-mm
    space at time $t > 0$ is the measure $\hat{\mathcal{M}}^{k,t}$ on
    $\mathbb{U}_k$ defined recursively as follows.
    \begin{itemize}
        \item For $k = 1$ and $t \ge 0$, $\hat{\Mc}^{1,t}[1] = e^{-a t}$.
        \item For every $k \geq 2$ and any function $G$ of the product form
          \begin{equation}
              \label{eq:product-form1}
              \forall U \in \mathbb{U}_k,\quad
              G(U) = \indic_{\{ c(U) = c \}} f(\tau(U)) \prod_{p=1}^{\abs{c}} F_p( U_p ),
          \end{equation}
          where $c$ is a composition of $k$, and $f \coloneq \R_+ \to \R$ and the
          functions $F_i \colon \mathbb{U}_{c_i}\to\R$ are continuous and
          bounded,
              \begin{equation}\label{moments:CSNP}
                  \hat{\Mc}^{k, t}[G] = \frac{m_{\abs{c}}}{\abs{c}!}
                  \int_0^t e^{-a(t-s)} f(s)  \prod_{i=1}^{\abs{c}}
                  \hat{\Mc}^{c_i,s}[F_i] \diff s,
              \end{equation}
              where
              \begin{equation} \label{eq:momentJumpMeasure1}
                  \forall p \ge 2,\quad m_p = (-1)^p \psi^{(p)}(0) =
                  \indic_{\{ p = 2\}} b +
                  \int_0^\infty
                  z^p \Lambda(\diff z).
              \end{equation}
    \end{itemize}
\end{definition}

The unplanar moment is now obtained by summing over all permutations of the
leaves as follows. The following result will be proved in
Section~\ref{sec:proofCSBP}.

\begin{proposition} \label{prop:momentMmSpace}
    Suppose that $\psi$ is a branching mechanism as in
    \eqref{eq:branching-mechanism} that satisfies Grey's condition
    \eqref{eq:greyCondition} and let $(\mathcal{U}_t, d_t, \vartheta_t)$
    be the $\psi$-mm from Definition~\ref{def:psiMmSpace} and $\bar{u}_t$
    be as in \eqref{eq:laplaceExponent}. Then for any measurable $F
    \colon \R_+^{k \times k} \to \R_+$,
    \begin{equation} \label{eq:momentPsiMM}
        \bar{u}_t \E\bigg[
            \int_{\mathcal{U}^k_t} F\big( d_t(\mathbf{v}) \big)
            \vartheta_t^{\otimes k}(\diff \mathbf{v})
            \bigg] = \sum_P \hat{\mathcal{M}}^{k,t}[F \circ P],
    \end{equation}
    where the sum runs over all permutations $P$ of $\{1, \dots, k\}$,
    and $P$ acts on $\mathbb{U}_k$ by permuting rows and columns.
\end{proposition}

Definition~\ref{def:planarMoments} provides a recursive characterization
of the (planar) moments of a $\psi$-mm space. In
Section~\ref{sect:moments-recursion}, we use product functionals of the
form \eqref{eq:product-form1} to derive a similar recursion formula for
the moments of the branching diffusion, in analogy with 
Definition~\ref{def:planarMoments}. As these formulas involve heavy
notation, we do not state them here.

Let us nevertheless briefly note that the recursive characterization of
the planar moments of the branching diffusion involves the spine process
\eqref{eq:spine-cut} and exhibits a structure similar to that of the
recursive construction of the planar moments of the $\psi$-mm space
above. In fact, we will show that for branching Brownian motions in the
semipushed regime of Section~\ref{sec:BBM} below, the fast mixing of the
spine implies that the two recursive characterizations coincide at the
limit.

\subsubsection{Sufficient convergence criterion to \texorpdfstring{$\alpha$}{alpha}-stable genealogies}
\label{sec:sufficient_condition}

In the previous sections, we alluded to recursive formulas for the
(planar) moments of the $\psi$-mm space and branching diffusions. In this
section, we address the following question: under which conditions does
convergence of the moments of a branching diffusion imply convergence to
an $\alpha$-stable branching process? This is the focus of the present section.

To illustrate the forthcoming approach, we briefly outline how one might
prove the convergence of a Galton--Watson process with heavy tailed
offspring distribution to an $\alpha$-stable CSBP using the method of
moments. The idea is to truncate large reproduction events by suppressing
all births with more than $A N^\gamma$ offspring for some arbitrary large
$A$. Under this truncation, one expects the moments to remain bounded and
to converge to those of a $\psi_A$-CSBP with jump measure
\[
    \Lambda_A(\diff z) = c \mathbf{1}_{(0,A)}(z) \frac{\diff z}{z^{\alpha+1}}.
\]
By letting $A \to \infty$ one should recover the full jump structure.

Let us now try to apply a similar methodology for branching diffusions.
For any $z > 0$, we defined $\Omega_z$ in \eqref{eq:truncatedDomain} as
the set of locations $x \in \Omega_\infty$ such that $h_\infty(x) \le z$.
By our heuristics in Section~\ref{sect:conjecture}, we expect a particle
outside of $\Omega_{AN^\gamma}$ to generate a jump in the limit which is
larger than $A$, after appropriate rescaling. Therefore, it is natural to
introduce a cutoff at $z = AN^\gamma$ for some $A \ge 1$ and $N > 0$ and
to kill the diffusion at the boundary of the bounded domain
$\Omega_{AN^\gamma}$. The purpose of this specific choice is that we
expect that the branching diffusion converges as $N \to \infty$ with $A
\ge 1$ fixed to a CSBP with truncated jump measure. The parameter $A$
controls the accuracy of the cutoff, and we should recover all the jumps
of the process by further letting $A \to \infty$. In the following, we
will denote  by
\[
    \mathcal{N}_{t,(N,A)} \coloneqq \mathcal{N}_{t,\Omega_{AN^\gamma}},
    \ \ h_{(N,A)} \coloneqq h_{\Omega_{AN^\gamma}}, \ \
    \mathbf{M}^{k,t}_{x,(N,A)} \coloneqq \mathbf{M}^{k,t}_{x,\Omega_{AN^\gamma}}
\]
and $\hat{\mathbf{M}}^{k,t}_{x,(N,A)}$ the rescaled moment as in
\eqref{eq:biasedMomentBBM}. In the absence of ambiguity, we will remove
the subscript $(N,A)$ to ease the notation. We will also write
$\Lambda_A$ for the pushforward of a measure $\Lambda$ by the dilatation
map $z \mapsto Az$ with mass rescaled by $A^{-\alpha}$, formally defined
as
\begin{equation}\label{eq:rescaled_lambda}
    \int_{(0,\infty)} f(z) \Lambda_A(\diff z)
    = A^{-\alpha} \int_{(0,\infty)} f(Az) \Lambda(\diff z).
\end{equation}

\begin{theorem} \label{thm:main-cv}
    Consider a branching diffusion satisfying
    Assumption~\ref{hyp:L1criticality} and such that the following holds
    for some $\alpha \in (1, 2)$.
    \begin{enumerate}
        \item There exists a measure $\Lambda$ with a finite exponential
              moment \eqref{eq:finiteExpMoment}  such that for each $A \ge 1$,
              there exist $a_A \in \R$, $b_A \ge 0$ with
              \begin{equation} \label{eq:convergenceMoment}
                  \forall k \geq 1, \: t \ge 0, \quad
                  \lim_{N \to \infty} \hat{\mathbf{M}}^{k,t}_{x} =
                  \hat{\mathcal{M}}^{k,t}_{A} \otimes
                  (h_\infty \tilde h_\infty)^{\otimes k},
              \end{equation}
              where $\hat{\mathcal{M}}^{k,t}_{A}$ is the $k$-th planar moment
              of the $\psi_{A}$-mm space from Definition~\ref{def:planarMoments}
              with
              \begin{equation} \label{eq:psiA}
                  \psi_{A}(\theta) = a_{A}\theta + \frac{b_{A}}{2}\theta^2
                  + \int_{(0,\infty)} \big( e^{-\theta z} - 1 + \theta z \big)
                  \Lambda_{A}(\diff z).
              \end{equation}
              Further, $\lim_{A\to\infty} a_{A}, b_{A}= 0$.

        \item For each $A \ge 1$, $\lim_{N\to\infty} h(x) = h_\infty(x)$
              uniformly on compact sets,
              \begin{equation} \label{eq:mainCV2}
                  \sup_{N \ge 1} \sup_{x \in \Omega_{AN^\gamma}}
                  h(x) N^{-\gamma} < \infty,
                  \qquad
                  \lim_{N \to \infty} \frac{1}{h(x)} \E_x[ Z_{tN} ] 
                  = e^{-a_A t}, \quad t \ge 0.
              \end{equation}
    \end{enumerate}
    Then there exists $C > 0$ such that, for any $t > 0$ and
    $x \in \Omega_\infty$, the following holds for some sequence
    $(\eta_N)_{N \ge 1}$ with $\eta_N \to 0$ as $N \to \infty$:
    \begin{enumerate}
        \item[(1)] Let $\bar{u}_t$ be as in \eqref{eq:laplaceExponent}
              for the branching mechanism $\psi(\theta) = C \theta^\alpha$,
              then there exists $C'$ such that 
              \[
                  \P_x( \bar Z_{t,\infty} > \eta_N ) \sim
                  \frac{C' h_\infty(x)}{(Nt)^\gamma},
                  \qquad \text{as $N \to \infty$}.
              \]
        \item[(2)] Let $(\mathcal{U}_t, d_t, \vartheta_t)$ be the
              $\psi$-mm space corresponding to $\psi(\theta) = C
                  \theta^\alpha$. Conditional on $\{ \bar{Z}_{t,\infty} >
                  \eta_N\}$ under $\P_x$,
              \[
                  \big( \bar{\mathcal{N}}_{t,\infty}, \bar{d}_{t,\infty},
                  \bar{\nu}_{t,\infty} \big)
                  \longrightarrow
                  \left(\mathcal{U}_t, d_t,\vartheta_t \otimes \tilde h_\infty \right),
                  \qquad \text{as $N \to \infty$},
              \]
              in distribution for the marked Gromov-weak topology.
    \end{enumerate}
\end{theorem}

This result should be compared with that for heavy-tailed critical
Galton--Watson processes that we recalled in Proposition~\ref{prop:GW}.
The first item of our theorem may be interpreted as a weak analogue
for branching diffusions of the first part of \eqref{eq:survProbGW}:
the probability of observing a macroscopic population at time $tN$
exhibits the same polynomial decay $C'/(tN)^{\gamma}$. We conjecture that
the same result also holds for $\eta_N = 0$; however, to keep the paper
at a reasonable length, we defer the investigation of this case to future
work. The marked Gromov-weak convergence in the second item formalises
Conjecture~\ref{conj}. Under our condition, it implies that 1) the
genealogy of the branching diffusion converges to the tree structure
encoded by $(\mathcal{U}_t, d_t, \vartheta_t)$, 2) the population size
converges to the same limit as in \eqref{eq:survProbGW}, and that 3) the
location of the particles are asymptotically i.i.d.\ with distribution
$\tilde{h}_\infty(y) \diff y$.

Next, let us discuss the conditions of the theorem  briefly. First, the
branching process with a cutoff is defined on a bounded domain so that the
moment formulas of Section \ref{sect:moments-recursion} allow for an
explicit computation. In practice, this allows us to compute the moments of
the branching diffusion recursively. In Section~\ref{sec:heuristics_BBM},
we will show how such a computation can be carried out on a specific
branching diffusion.

Secondly, it will follow directly from the convergence of the moments in
\eqref{eq:convergenceMoment} that, for $A \ge 1$ fixed, the genealogy of
the branching diffusion on $\Omega$ converges as $N \to \infty$ to some
$\psi_A$-mm space. The convergence of the branching diffusion on the full
domain $\Omega_\infty$ then follows in two steps. First, the fact that
the jump measure $\Lambda_A$ of $\psi_A$ is obtained by the dilatation map
\eqref{eq:rescaled_lambda} will be sufficient to deduce that the
$\psi_A$-mm space converges as $A \to \infty$ to an $\alpha$-stable
genealogy. Second, a perturbation argument for random metric spaces will
allow us to interchange the limits as $N \to \infty$ and as $A \to
    \infty$, showing that the limit of the branching diffusion without cutoff
coincides with the $\alpha$-stable genealogy. Finally, Point~2 is a set
of technical conditions that ensure that we can go from sampling
particles proportional to $h(x)$ to sampling particles uniformly from the
population.

\subsection{Branching Brownian motion in the semipushed regime}
\label{sec:BBM}

Theorem~\ref{thm:main-cv} reduces the convergence of the genealogical
structure of a branching diffusion to an $\alpha$-stable CSBP to a moment
computation. Moments are computed through the recursive formula displayed
in Section~\ref{sec:recursion-moments}. To illustrate the method, we will
now apply those results to a particle system satisfying
Assumption~\ref{hyp:L1criticality} and \ref{hyp:alpha_tail} and for which
we can rigorously compute the limit of the moments. This system is a
branching Brownian motion with absorption that was introduced in
\cite{schertzer2023spectral} (as an extension of the model in
\cite{tourniaire21}) as a toy approximation of the front of a pushed
expansion. This was the original source of motivation for our work, see
Section~\ref{sec:noisyFront} below for a discussion of the connexion
between this model and the front of noisy travelling waves.

\subsubsection{Model and convergence of the genealogy}
\label{sec:model_BMM}

Consider a dyadic branching Brownian motion (BBM) on $\Omega_\infty = (0,\infty)$
with killing at $0$, negative drift $-\mu$ and space-dependent branching
rate
\begin{equation}\label{def:r}
    r(x) = \frac{1}{2}W(x)+\frac{1}{2}.
\end{equation}
We assume that $W \colon [0, \infty) \to \R$ is non-negative,
continuously differentiable, and with support in $[0, 1]$. For this BBM,
the operator $\mathcal{L}_\infty$ from \eqref{eq:op_L} is
\[
    \mathcal{L}_\infty = \frac{1}{2}\partial_{xx}-\mu\partial_x,
    \quad \text{on $\Dom_\infty=(0,\infty)$.}
\]
We first argue as in \cite[Section~1.6]{tourniaire21} that, for some
functions $W$, the drift $\mu$ can be chosen so that $\mathcal{L}_\infty+r$ is
critical. Note that if we write $g(x) = e^{\mu x}f(x)$ then
\[
    e^{\mu x}(\mathcal{L}_\infty+r)f(x)=\left(\mathcal{A}+\frac{1-\mu^2}{2}\right)g(x), \quad x\in \Dom_\infty,
\]
with
\begin{equation} \label{eq:operatorA}
    \mathcal{A} \coloneqq \frac{1}{2}\partial_{xx}+\frac{1}{2} W,
    \quad \text{on} \quad \Dom_\infty=(0,\infty).
\end{equation}
We can distinguish three cases:
\begin{enumerate}[(i)]
    \item If $\mathcal{A}$ possesses a Green's measure -- in which case
        we say that $\mathcal{A}$ is \emph{subcritical} -- then the
        operator $\mathcal{L}_\infty+r$ is also subcritical for any
        choice of $\mu \geq 1$.
    \item If $\mathcal{A}$ is critical, then so is $\mathcal{L}_\infty+r$ for $\mu=1$.
    \item If $\mathcal{A}$ is neither subcritical nor critical, it is
          said to be \emph{supercritical} and it follows from
          \cite[Theorem~4.6.7]{pinsky95} that
          \begin{equation}
              \label{eq:A_critical}
              \exists ! \;
              \lambda_\infty>0\quad \text{such that} \quad
              \mathcal{A}-\lambda_\infty \quad  \text{is critical}
          \end{equation}
          and
          \begin{equation}
              \label{def:mu}
              \mathcal{L}_\infty+r \quad \text{critical} \iff
              \mu=\sqrt{1+2\lambda_\infty}.
          \end{equation}
\end{enumerate}

In case~(iii), let $v_\infty$ denote the positive harmonic function for
$\mathcal{A}-\lambda_\infty$ with $v_\infty(1) = 1$. On $[1,\infty)$,
\begin{equation}
    \label{eq:decay_v}
    v_\infty(x)=e^{-\beta(x-1)}, \quad \text{with}
    \quad \beta \coloneqq \sqrt{2\lambda_{\infty}}.
\end{equation}
The set of positive  harmonic functions for $\mathcal{L}_\infty+r$
and for $\tilde{\mathcal{L}}_\infty+r$ on $\Dom_\infty$ then consist,
respectively, of positive multiples of
\begin{equation} \label{eq:h_inf}
    h_\infty(x) = c_{h} e^{\mu x}v_\infty(x), \qquad\text{and}
    \qquad \tilde{h}_\infty(x) = c_{\tilde h} e^{-\mu x}
    v_\infty(x),
\end{equation}
where the constants $c_{h}$ and $c_{\tilde h}$ are chosen so that
$\angle{\tilde{h}_\infty, 1} = 1$ and $\angle{\tilde{h}_\infty, h_\infty} = 1$.
With this definition, we see from \eqref{eq:decay_v} that
there exists $c > 0$ such that
\begin{equation} \label{def:alpha}
    \int_0^\infty \indic_{\{ h_\infty(x) < z\}} \tilde{h}_\infty(x) \diff x
    \sim
    \frac{c}{z^\alpha},\qquad
    \text{with} \quad  \alpha \coloneqq \frac{\mu+\beta}{\mu-\beta}.
\end{equation}
Therefore, the BBM has a spatial power-law tail in the sense of
Assumption~\ref{hyp:alpha_tail} precisely when
\begin{equation}\label{Hwp}
    \alpha\in(1,2) \Longleftrightarrow \mu> 3\beta  \;
    {\Longleftrightarrow  \mu\in \left(1,\frac{3}{2\sqrt{2}}\right)}, \tag{$H_{sp}$}
\end{equation}
and in this case, we say that the BBM is in the semipushed
regime. (Note that under \eqref{Hwp}, the operator $\mathcal{L}_\infty+r$
also satisfies Assumption~\ref{hyp:L1criticality}.)

\begin{rem}
    In case (iii), $\lambda_\infty$ is the \emph{generalised principal
        eigenvalue} of the operator $\mathcal{A}$ on $\Omega_\infty$. In
    cases (i) and (ii), this principal eigenvalue is actually
    $\lambda_\infty = 0$, see \cite[Theorem 4.3.2]{pinsky95}.
\end{rem}

\begin{theorem} \label{th:cv_mmm}
    Under \eqref{Hwp}, the conditions of Theorem~\ref{thm:main-cv} hold
    for the BBM for $\alpha \in (1,2)$ given by~\eqref{def:alpha}.
\end{theorem}

\begin{rem}
    Our result derives the limit of the genealogy when the process starts
    from a single particle. It should not be hard from our estimates to
    extend this result to a front-like initial configuration, for which
    the process starts from a large number of particles. In turn, this
    can be used to derive the finite-dimensional convergence of the
    population size to an $\alpha$-stable CSBP as in
    \cite[Theorem~1.2]{tourniaire21}.
\end{rem}

\subsubsection{Related literature and stochastic fronts}
\label{sec:noisyFront}

Branching Brownian motions with absorption have been extensively studied
in the probabilistic literature, in particular when $W \equiv 0$
\cite{kesten1978branching, harris2006further, Harris2007SurvivalBBM,
    berestycki2011survival, berestycki13, berestycki2014critical,
    berestycki2015critical, maillard20}. The version of the model with $W
    \not\equiv 0$ that we consider here was introduced more recently in
\cite{tourniaire21} as a particle system that approximates the front of
the following stochastic partial differential equation
\begin{equation} \label{eq:spde}
    \partial_t u = \partial_{xx} u + u(Bu + 1)(1 - u) +
    \sqrt{\tfrac{u(1-u)}{N}} \xi,
\end{equation}
for some space-time white noise $\xi$. We refer to the discussion in
\cite{tourniaire21, schertzer2023spectral} for the connexion between the
two models. Noisy reaction--diffusion equations as \eqref{eq:spde} have
received considerable attention, we refer to~\cite{panja2004effects} for background.
Let us simply mention that Birzu, Hallatschek, and Korolev
\cite{birzu2018fluctuations, birzu2021genealogical} recently identified a
new phase transition for \eqref{eq:spde} at $B = 4$. Using simulations
and analytical arguments, they further showed that this transition
impacts the shape and timescale of the genealogy of the population
\cite{birzu2021genealogical}: the genealogy is expected to be binary when
$B > 4$ (the fully pushed regime) and to have multiple mergers when $B
\in (2, 4)$ (the semipushed regime). In the fully pushed regime,
convergence to a binary genealogy has already been rigorously established
for a Moran-type model \cite{etheridge2022genealogies} and for our BBM
when $\alpha > 2$ in \eqref{def:alpha} \cite{schertzer2023spectral}.
The original motivation for our work was to show rigorously the results
of \cite{birzu2021genealogical} in the semipushed regime for the BBM
model.

In the special case when $W$ is a step function, \cite{tourniaire21}
already showed that the number of particles in the BBM converges to an
$\alpha$-stable CSBP when $\alpha \in (1, 2)$. Our work improves upon
this result in several directions. Most obviously,
Theorem~\ref{th:cv_mmm} yields the convergence of both the number of
particles and the genealogy of the system to those of an $\alpha$-stable
branching process. Hence, this verifies rigorously the results of
\cite{birzu2021genealogical} in the semipushed case. Moreover, the proof
of Theorem~\ref{th:cv_mmm} relies on a completely different set of
arguments from those in \cite{tourniaire21}. This new approach is more
robust and allowed us to take a first step towards describing the
universality class of this model by considering a whole class of
functions $W$. Lastly, a large share of our arguments are fully general.
This led us to conjecture that the phenomena uncovered by
\cite{birzu2018fluctuations, birzu2021genealogical} are generic features
of spatial particle systems which are not restricted to stochastic
fronts. We gave in \eqref{eq:fundamental-condition} a characterisation of
the exponent that should drive the dynamics of a general branching
diffusion, and provided in Theorem~\ref{thm:main-cv} a set of moment
conditions under which the genealogy does converge.

Finally, let us mention that \cite{berestycki13} proved convergence of
the genealogy of the BBM to the Bolthausen--Sznitman
coalescent when $W \equiv 0$ (which corresponds to the pulled regime of
\eqref{eq:spde}). We believe that this result should extend to any $W$
such that the operator $\mathcal{A}$ in \eqref{eq:operatorA} is
subcritical (Case~(i) above). Note that the operator $\mathcal{A}$
becomes critical (Case~(ii) above) exactly at the transition between the
pulled and the pushed regime ($B = 2$ in \eqref{eq:spde}), which is
referred to as the pushmi-pullyu case in PDE theory
\cite{an2023quantitative}. See also the work of \cite{derrida2023cross}
for the critical window around this transition.

\subsection{Heuristics for the BBM moments}
\label{sec:heuristics_BBM}

\begin{figure}[t]
    \centering
    \includegraphics[width=.5\textwidth]{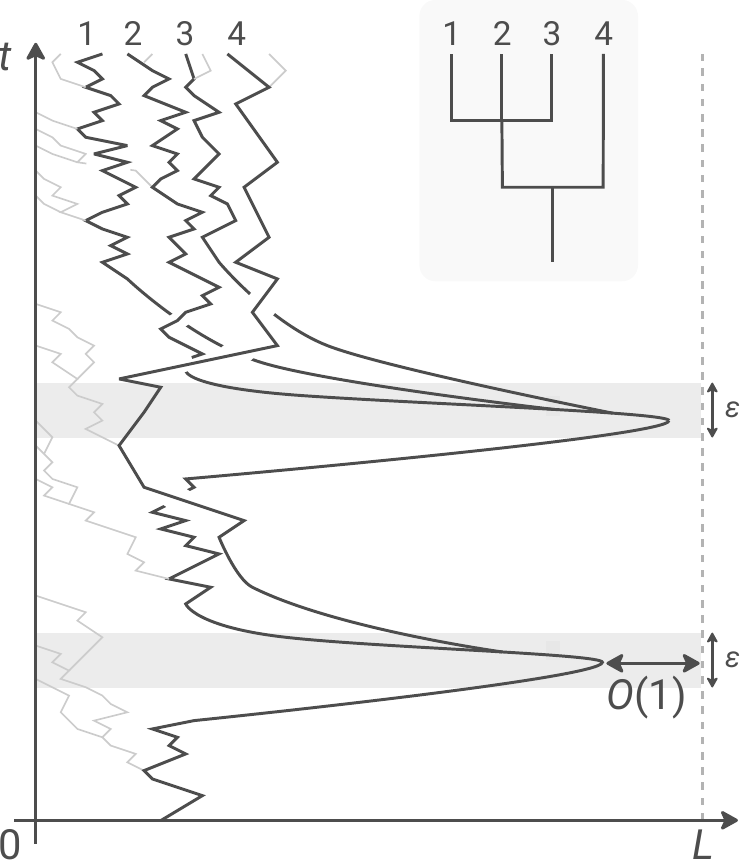}
    \caption{Illustration of the behaviour of the BBM. The ancestral
    lineages of particles sampled at time $t$ typically remain close to
    the absorbing boundary except at coalescence times. Forward in time,
    these events correspond to a particle reaching a distance of
    order $L$ from the origin and producing a large offspring in a short
    amount of time (of order $\epsilon$). Recall that $L$ is chosen in
    such a way that $h(L)$ is of order $N^\gamma$. To capture these, we
    contract the branch points occuring on a time window of lenght
    $\epsilon$ after this event, leading to the genealogy on the top. 
    }
    \label{fig:intuition}
\end{figure}

In this section we give the main ideas behind the proof of
Theorem~\ref{th:cv_mmm}.  In view of
Theorem~\ref{thm:main-cv}, the bulk of the technical work in showing
convergence to an $\alpha$-stable genealogy is to compute the limit of
the moments of the BBM with a cutoff, see Section~\ref{sec:sufficient_condition}.
Let $A \ge 1$ and $N > 0$. It is a
direct computation using \eqref{eq:h_inf} that the truncated domain of
the BBM is an interval $\Omega_{A N^\gamma} = (0, L_{(N,A)})$ and that, up to
a constant shift,
\begin{equation} \label{def:LA0}
    L \equiv L_{(N,A)} \coloneqq \frac{1}{2\beta} \log(N)
    + \frac{1}{\mu-\beta} \log(A).
\end{equation}
The eigenelements of the BBM on $(0, L)$ are defined in terms of the
solutions to the following Sturm--Liouville problem
\begin{equation}\tag{SLP}\label{SLP}
    \frac{1}{2} u''(x) + \frac{1}{2} W(x)u(x) = \lambda u(x),
\end{equation}
with  boundary conditions
\begin{equation}\tag{BC}\label{BC}
    u(0) = u(L) = 0.
\end{equation}
Let $\lambda \equiv \lambda_{(N,A)}$ and $v \equiv v_{(N,A)} \ge 0$
be respectively the maximal eigenvalue and eigenfunction (see
\cite[Chapter 4]{zettl10}) of \eqref{SLP} with boundary conditions
\eqref{BC}. 
Recall the definition of $\lambda_\infty$ from \eqref{eq:A_critical} and
let
\begin{equation} \label{eq:w}
    w \equiv w_{(N,A)} \coloneqq \lambda_{\infty} - \lambda.
\end{equation}
We further define
\begin{equation} \label{eq:def_h}
    h(x) \equiv h_{(N,A)}(x) = c_{h}e^{\mu x} v(x),
    \qquad
    \tilde{h}(x) \equiv \tilde{h}_{(N,A)}(x) = c_{\tilde{h}} e^{-\mu x} v(x),
    \quad x \in [0, L],
\end{equation}
where $v$ is renormalized in such a way that $v(1) = 1$ and $c_{h}$,
$c_{\tilde h}$ are as in \eqref{eq:h_inf}. It is not hard to check that
$\tilde{h}$ and $h$ are the left and right eigenfunctions, respectively,
of the generator of the BBM with cutoff at $L$, corresponding to the
eigenvalue $-w$.
The spectral
analysis of \eqref{SLP} carried out in \cite{schertzer2023spectral} shows
(Lemma~\ref{lem:spectral}) that there exists $c_w > 0$ such that
\begin{equation} \label{eq:heuristicW}
    \lim_{N \to \infty} Nw = \frac{c_w}{A^{\alpha-1}}.
\end{equation}
A large part of the argument is dedicated to proving that for every
$A \ge 1$,
\begin{equation} \label{eq:convergenceMoment2}
    \forall k \geq 1,\: t \ge 0, \quad
    \lim_{N \to \infty} \hat{\mathbf{M}}^{k,t}_{x} =
    \hat{\mathcal{M}}^{k,t}_{A} \otimes
    (h_\infty \tilde{h}_\infty)^{\otimes k},
\end{equation}
where $h_\infty$, $\tilde h_\infty$ are as in \eqref{eq:h_inf},
$\hat{\mathcal{M}}^{k,t}_A$ is the $k$-th planar moment of the
$\psi_A$-mm space with
\[
    \psi_{A}(\theta)
    = \frac{c_{w}}{A^{\alpha-1}}\theta
    +\int_{(0,\infty)} \big( e^{-\theta z} - 1 + \theta z \big)
    \Lambda_{A}(\diff z), \quad \theta \geq 0,
\]
and $\Lambda_A$ is the pushforward of some measure $\Lambda$ by a dilatation map
as defined in \eqref{eq:rescaled_lambda}.
The aim of this section is to outline the main steps of the proof of
\eqref{eq:convergenceMoment2}.

\subsubsection{Capturing the multiple mergers}

One of the difficulties of our model is that the higher order branchpoints
in the limit emerge from the accumulation of binary branchpoints and are
not directly visible in the BBM. In order to circumvent this issue,
we consider a modification of the moments of the BBM for which all
branchpoints that occur less than $\epsilon$ unit of time after the first
branchpoint $\tau$ are merged, see the picture in Figure~\ref{fig:mergingMap}.
Formally, this is done by defining a map $S^\epsilon = (S^\epsilon_{ij})_{i,j} 
\colon \mathbb{U}_k \to \mathbb{U}_k$ on the space of ultrametric
matrices as follows
\begin{equation} \label{eq:mergedMap}
    \forall U \in \mathbb{U}_k,\qquad
    S_{ij}^\epsilon(U) =
    \begin{dcases}
        U_{ij}  & \text{if $U_{ij} < \tau(U) - \epsilon$,} \\
        \tau(U) & \text{else.}
    \end{dcases}
\end{equation}
As long as $\epsilon \ll N$, the branchpoints which are merged by the map
$S^\epsilon$ eventually accumulate to a single multiple merger on the
timescale $N$. We will establish \eqref{eq:convergenceMoment2} by
computing the limit of the moments of the BBM evaluated against
functionals of the form $G \circ S^{\epsilon / N}$, where $G$ is of the
product form
\begin{equation} \label{eq:productFuncMarks}
    \forall U^* \in \mathbb{U}^*_k,\quad
    G(U^*) = \indic_{\{ c(U) = c \}} f(\tau(U)) \prod_{p=1}^{\abs{c}} F_p(U^*_p)
\end{equation}
similarly to equation \eqref{eq:product-form1}. Using the branching
property of the BBM, we will show (Proposition~\ref{cor:many-to-few})
that
\begin{multline} \label{eq:branching:rn0}
    \hat{\mathbf{M}}_x^{k,t}[G \circ S^{\frac{\epsilon}{N}}]
    = \frac{1}{N^{\gamma(\abs{c}-\alpha)}}
    \int_{\epsilon/N}^{t} f(u) e^{-Nw(t-u)}
    \E_x\bigg(
    r(\bar{\zeta}_{t-u}) h(\bar{\zeta}_{t-u})
    \\ \times
    \sum_{n=1}^{\abs{c}-1} \mathbf{M}_{\bar{\zeta}_{t-u}}^{n,\eps}\bigg[
        \prod_{i=1}^n \hat{\mathbf{M}}^{c_i,u-\frac{\eps}{N}}_{X_i}[F_i]
        \bigg]
    \mathbf{M}_{\bar{\zeta}_{t-u}}^{\abs{c}-i,\eps}\bigg[
    \prod_{i=1}^{\abs{c}-n} \hat{\mathbf{M}}^{c_{i+n},u-\frac{\eps}{N}}_{X_i}[F_{i+n}]
    \bigg] \bigg) \diff u,
\end{multline}
where $\bar{\zeta}_s = \zeta_{Ns}$ is the spine process of the BBM with
cutoff with time accelerated by a factor $N$. (We recall that this spine
process is obtained via the Doob $h$-transform~\eqref{eq:spine-cut}, with
$h$ defined in~\eqref{eq:def_h}.)

This recursion should be compared with that for the planar moments of the
limiting $\psi_A$-mm space in \eqref{moments:CSNP}. We compute the limit as
$N \to \infty$ of \eqref{eq:branching:rn0} in two steps.

\begin{figure}
    \centering
    \includegraphics[width=.7\textwidth]{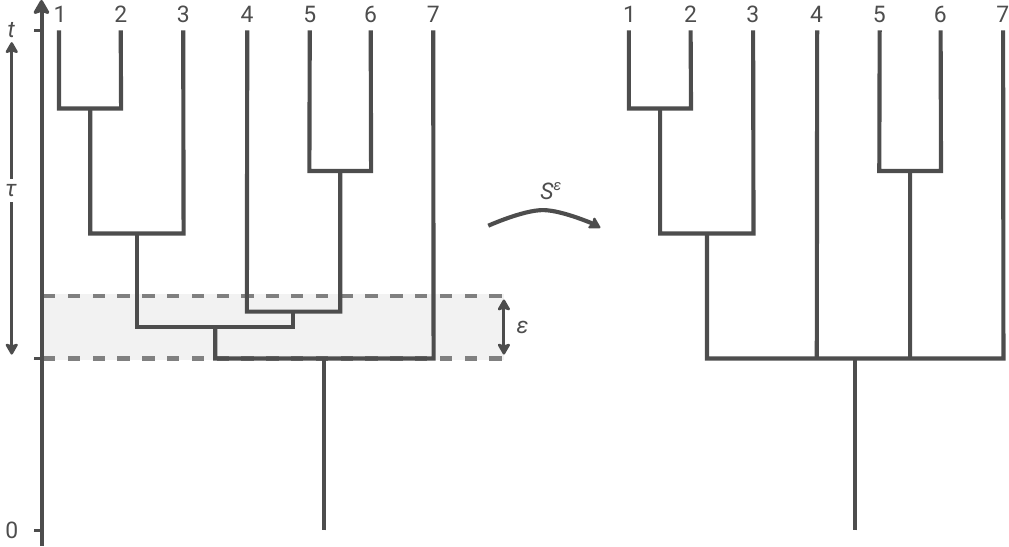}
    \caption{Illustration of the map $S^{\epsilon}$. All branchpoints
        in the original tree (left) whose distance to the leaves lies within
        $[\tau, \tau+\epsilon]$ are merged to a single branchpoint at $\tau$
        (right).}
    \label{fig:mergingMap}
\end{figure}

\subsubsection{Step 1: Fast mixing of the spine process}
\label{sec:step1}

According to \eqref{eq:branching:rn0}, the $k$-th moment of the BBM
depends on the initial position $x$ only through the value of the
accelerated spine process $\bar{\zeta}_s$ for $s>0$. We will show in
Section~\ref{sec:spectral_theory} that the spine process relaxes to its
equilibrium distribution on the timescale of order $\log(N)\ll N$. More
precisely, if we denote by $q_t$ the transition kernel of the spine
process, then
\[
    q_t(x,y) \approx \Pi(y) \quad \text{for } t \gg \log(N),
\]
with
\begin{equation} \label{eq:Pi}
    \Pi(y)\, \diff y = h(y)\tilde h(y)\, \diff y
    = c_h c_{\tilde h} v(y)^2 \diff y. 
\end{equation}
Thus, we replace $\bar{\zeta}_{t-u}$ in \eqref{eq:branching:rn0} by a
random variable distributed according to $\Pi$ to obtain the
approximation that, for any $\mathbf{c} \in (0,1)$,
\begin{equation} \label{eq:heuristicInitialCondition}
    \hat{\M}^{k,t}_x[F] \approx
    \hat{\M}^{k,t}_\Pi[F] \coloneqq
    \int_0^L \hat{\M}^{k,t}_y[F] \Pi(y) \diff y,\qquad
    x \in (0, \mathbf{c}L).
\end{equation}
Note that the starting point $x$ needs to be far from the killing
boundary at $L$ because $\mathbf{c} < 1$.

Indeed, the approximation \eqref{eq:heuristicInitialCondition}
breaks down if $x$ is too close from the right boundary, since
$L$ was chosen so that a particle started close to $L$ releases a number
of particles of the order $N^\gamma$ before it returns close to the
left boundary at $0$. In turn, we will use
\eqref{eq:heuristicInitialCondition} to deduce that
\[
    \forall y\in[0,L], \quad \mathbf{M}_{y}^{j,\eps}\left[
        \prod_{i=1}^j \hat{\mathbf{M}}^{c_i,u-\frac{\eps}{N}}_{X_i}[F_i]
        \right] \ \approx  \ \mathbf{M}_{y}^{j,\eps}[1]
    \prod_{i=1}^j \hat{\mathbf{M}}^{c_i,u}_{\Pi}[F_i].
\]
For this approximation to hold, we need that $\epsilon$ is large enough
so that the typical particles of a BBM started close to the right
boundary are far from $L$ (in $(0, \mathbf{c}L)$ for some $\mathbf{c}
< 1$) at time $\epsilon$. Since spine particles have a drift to the left, we
will see (Lemma~\ref{lem:boundSpineProcess2}) that it is sufficient to let
$\epsilon = a L$ for some $a > 0$.

If for $p \ge 2$ we let
\begin{equation}\label{eq:m_c}
    \hat{m}_{p} \equiv \hat m_{p,(N,A)}
    \coloneqq \frac{p!}{N^{\gamma(p-\alpha)}}
    \int_0^L r(y) h(y)\Pi(y) \sum_{j=1}^{p-1}
    \mathbf{M}_y^{j,\eps}[1] {\mathbf{M}}_y^{p-j,\eps}[1]
    \diff y
\end{equation}
the two approximations above and the estimate for $wN$ in
\eqref{eq:heuristicW} show (Proposition~\ref{lem:approx:1}) that
\begin{equation} \label{eq:approxRecursion}
    \hat{\mathbf{M}}^{k,t}_x[G \circ S^\epsilon]
    \approx \hat{\mathbf{M}}^{k,t}_\Pi[G \circ S^\epsilon]
    \approx
    \frac{\hat{m}_{\abs{c}}}{\abs{c}!}
    \int_0^t f(u) e^{-\frac{c_w}{A^{\alpha-1}}(t-u)}
    \prod_{i=1}^{\abs{c}} \hat{\mathbf{M}}_\Pi^{c_i,u}[F_i] \diff u.
\end{equation}
The latter relation mimics the recursive relation for the moments
of $\psi$-mm spaces as written in Definition~\ref{def:planarMoments}.
\subsubsection{Step 2. The reversed process}
\label{sec:step2}

According to the recursion formula for the moments of a $\psi$-metric
measure space, see \eqref{moments:CSNP}, we now need to identify
$\hat{m}_p$ in \eqref{eq:m_c} with the $p$-th moment of a certain jump
measure. The key observation is that the invariant distribution of the
spine has density proportional to $\tilde h(x)h(x)$, see~\eqref{eq:Pi}.
As a
consequence, the integral in \eqref{eq:m_c} involves integrating $h^2(x)
    \tilde{h}(x)$. From \eqref{eq:h_inf}, for every $x > 1$
\[
    h^2(x) \tilde{h}(x) \to h_\infty^2(x) \tilde h_{\infty}(x)
    = c_h^2 c_{\tilde{h}} e^{(\mu -3\beta)x},
    \qquad \text{as $N \to \infty$.}
\]
In the semipushed regime \eqref{Hwp} we have $\mu - 3\beta > 0$ so that
the mass of the integral is concentrated at the killing boundary $L$. This
reflects that branching points in the BBM are typically close to the
boundary $L$. See Figure~\ref{fig:intuition}.

When estimating $\hat{m}_p$ in \eqref{eq:m_c}, it then becomes natural to
look at the process started from an initial particle close to $L$.
Reversing the spatial axis, this is a BBM on $[0, L]$ with
\emph{positive} drift $\mu$ and branching rate $r(L-x)$. Since $r(L-x) =
\frac{1}{2}$ for $x<L-1$, we will approximate this process by a BBM with
a single killing boundary at $0$ and constant branching rate $1/2$. We
will call this approximation the \emph{reversed branching Brownian
motion} (reversed BBM). The unique boundary of the reversed BBM is at $0$
and corresponds to the right boundary at $L$ in the original BBM. As will
become clear later, the reversed BBM will be a good approximation of the
original BBM started close to $L$ until a time of order $a L$ for $a <
1/\beta$, which will ensure that no particles in the original process
enter the interval $[0,1]$ with high probability 
(see Lemma~\ref{lem:boundSpineProcess}). Combining this constraint with that
from Step~1 leads us to choose $\epsilon = aL$ with $0 < a < 1/\beta$,
and we let
\begin{equation} \label{eq:defEps}
    \epsilon \equiv \epsilon_{(N,A)} \coloneqq \frac{L}{2\beta}.
\end{equation}

We now denote by $\overleftarrow{\mathcal{N}_t}$ the particles alive at
time $t$ in the reversed process, and by $\overleftarrow{X_v}(t)$ the
position of a particle $v \in \overleftarrow{\mathcal{N}_t}$ at time $t$.
Define for $z \ge 0$
\begin{equation}\label{eq:notation_reversed}
    \la{v}(z) = 2 e^{\beta} \sinh(\beta z),\quad
    \la{h}(z) = c_h \la{v}(z) e^{-\mu z},\quad
    \la{\widetilde{h}}(z) = c_{\tilde{h}} \la{v}(z) e^{\mu z},\quad
    \la{\Pi}(z) = \la{\widetilde{h}}(z) \la{h}(z)
\end{equation}
with $c_h$ and $c_{\tilde{h}}$ as in \eqref{eq:h_inf}.
All these quantities play an analogous role to their corresponding
objects for the forward BBM; for instance $\overleftarrow{h}$ can
be interpreted as a harmonic function for the reversed BBM.
A direct computation (Lemma~\ref{lem:relationReverseEigenelements})
will reveal that
\[
    h(L-z) \approx AN^\gamma \la{h}(z),\qquad \Pi(L-z) \approx
    (AN^\gamma)^{1-\alpha} \la{\Pi}(z).
\]
Using this estimate, by coupling the process started from $L-z$ with the
reversed BBM started from $z$ we will show
(Lemma~\ref{lem:comparisonReversed}) that
\[
    \M^{p,t}_{L-z}[1]
    =
    \frac{1}{h(L-z)}
    \E_{L-z}\Bigg[
        \sum_{v_1 < \dots < v_p \in \mathcal{N}_t}
        \prod_{i=1}^p h(X_{v_i}(t))
        \Bigg]
    \approx
    (AN^\gamma)^{p-1} \la{\M}^{p,t}_z[1],
\]
where $\la{\M}^{p,t}_z[1]$ is the $p$-th planar moment of the reversed
BBM defined analogously to the original BBM as
\begin{equation} \label{eq:biasedReversedMoment}
    \la{\M}^{p,t}_z[1] \coloneqq
    \frac{1}{\la{h}(z)}
    \la{\E}_z\Bigg[
        \sum_{v_1 < \dots < v_p \in \la{\mathcal{N}}_t}
        \prod_{i=1}^p \la{h}(\la{X_v}(t))
        \Bigg].
\end{equation}
Making a change of variable $y = L - z$ in \eqref{eq:m_c}, this
approximation will entail (Proposition~\ref{lem:estimateBranchingMoment})
that
\begin{equation} \label{eq:heuristic-1}
    \hat{m}_p \approx A^{p-\alpha} \frac{p!}{2}
    \int_0^\infty \la{h}(z) \la{\Pi}(z) \sum_{j=1}^{p-1}
    \la{\M}^{j,\infty}_z[1] \la{\M}^{p-j,\infty}_z[1] \diff z.
\end{equation}

\subsubsection{Identifying the jump measure}
\label{sec:step3}

The previous two steps have established that
\[
    \hat{\M}_x^{k,t}[G \circ S^{\frac{\epsilon}{N}}]
    \approx
    \frac{\hat{m}_{\abs{c}}}{\abs{c}!}
    \int_0^t f(u) e^{-\frac{c_w}{A^{\alpha-1}}(t-u)}
    \prod_{i=1}^{\abs{c}} \hat{\mathbf{M}}_\Pi^{c_i,u}[F_i] \diff u
\]
with $\hat{m}_{\abs{c}}$ given in \eqref{eq:heuristic-1}.
Comparing this expression with that for the moments of a $\psi$-mm space
in \eqref{moments:CSNP}, the result will follow by induction provided
that we can find a measure $\Lambda$ such that $\hat{m}_p$ is the $p$-th
moment of the dilated measure $\Lambda_A$ defined in
\eqref{eq:rescaled_lambda}. Since, by definition of $\Lambda_A$,
\[
    \int_{(0,\infty)} z^p \Lambda_A(\diff z)
    = A^{p-\alpha} \int_{(0,\infty)} z^p \Lambda(\diff z),
\]
we are left with finding $\Lambda$ such that
\[
    \int_{(0,\infty)} z^p \Lambda(\diff z) = \frac{p!}{2}
    \int_0^\infty \la{h}(z) \la{\Pi}(z) \sum_{j=1}^{p-1}
    \la{\M}^{j,\infty}_z[1] \la{\M}^{p-j,\infty}_z[1] \diff z.
\]
By computing the moments of the reversed BBM, we shall see in
Proposition~\ref{prop:limitReversed} that the appropriate measure
$\Lambda$ can be defined as the following limit in the vague topology
\[
    \Lambda \coloneqq \lim_{z \to \infty}
    \frac{\beta}{2} \la{\widetilde{h}}(z) \mathscr{L}_z(\la{W}_\infty),
\]
where $\la{W}_\infty$ is the limit of the additive martingale of the
reversed BBM defined as
\begin{equation} \label{eq:reversedMartingale}
    \forall t \ge 0,\quad \la{W_t} =
    \sum_{v \in \la{\mathcal{N}}_t} \la{h}\big( \la{X_v}(t) \big).
\end{equation}
Intuitively, the measure $\Lambda$ is obtained by picking a particle from
the ``bulk'' of the population, that is according to $\la{\tilde{h}}$,
and looking at the number of particles that it releases in the long term,
which is given by the value of $\la{W}_\infty$.

\begin{table}[t]
    \centering\renewcommand{\arraystretch}{1.2}
    \begin{tabularx}{\textwidth}{ccC}
        \toprule
        Notation
        & Definition 
        & Description\\
        \midrule

        $\lambda_\infty$, $v_\infty$
        & \eqref{eq:operatorA}
        & Principal eigenvalue, eigenfunction of
        $\mathcal{A}$ \\

        $\mu = \sqrt{1 + 2\lambda_\infty}$                         
        & \eqref{def:mu}
        & Drift of the particles in the BBM \\

        $\beta = \sqrt{2 \lambda_\infty}$ 
        & \eqref{eq:decay_v}
        & Drift of the spine process $(\zeta_{t,\infty})_{t \ge 0}$ \\

        $\alpha = \frac{\mu+\beta}{\mu-\beta}$ 
        & \eqref{def:alpha}
        & Exponent of the spatial power law \\

        $\gamma = \frac{1}{\alpha-1}$ 
        & \eqref{eq:survProbGW}
        & The number of particles at time $N$ scales as $N^\gamma$ \\

        $\epsilon = \frac{1}{2\beta} L$
        & \eqref{eq:defEps}
        & Timescale over which branch points accumulate \\

        $h_\infty(x) = c_h e^{\mu x} v_\infty(x)$ 
        & \eqref{eq:h_inf}
        & Principal right eigenfunction of $\mathcal{L}_\infty + r$ \\

        $\tilde{h}_\infty(x) =c_{\tilde h} e^{-\mu x} v_\infty(x)$ 
        & \eqref{eq:h_inf} 
        & Principal left eigenfunction of $\mathcal{L}_\infty + r$ \\

        $\Pi_\infty(x) = h_\infty(x) \tilde{h}_\infty(x)$
        & \eqref{eq:def_Pi_g}
        & Stationary distribution of the spine process $(\zeta_{t,\infty})_{t \ge 0}$ \\

        $(\mathcal{N}_{t,\infty}, d_{t,\infty}, \nu_{t,\infty})$
        & Section~\ref{sect:gen-structure}
        & Mmm-space encoding the genealogy of the BBM \\

        $(\bar{\mathcal{N}}_{t,\infty}, \bar{d}_{t,\infty}, \bar{\nu}_{t,\infty})$
        & \eqref{eq:rescalingMmmSpace}
        & Rescaled mmm-space of the BBM \\

        $L \equiv L_{(N,A)}$
        & \eqref{def:LA0}
        & Length of the cutoff interval \\

        $\lambda, v$
        & \eqref{SLP}
        & Principal eigenvalue, eigenfunction of \eqref{SLP} \\

        $w = \lambda_\infty - \lambda$
        & \eqref{eq:w}
        & Subcriticality parameter of the BBM on $(0, L)$ \\

        $h(x) = c_h e^{\mu x} v(x)$
        & \eqref{eq:def_h}
        & Principal right eigenfunction of $\mathcal{L} + r$ on $(0,L)$ \\

        $\tilde{h}(x) = c_{\tilde{h}} e^{-\mu x} v(x)$
        & \eqref{eq:def_h}
        & Principal left eigenfunction of $\mathcal{L} + r$ on $(0,L)$ \\

        $\Pi(x) = h(x)\tilde{h}(x) $
        & \eqref{eq:defPi}
        & Stationary distribution of $(\zeta_t)_{t \ge 0}$ \\

        $(\bar{\mathcal{N}}_{t}, \bar{d}_{t}, \bar{\nu}_{t})$
        & Section~\ref{sect:cut-off}
        & Rescaled mmm-space of the BBM on $[0, L]$ \\

        $\mathbf{M}^{k,t}_{x}$ 
        & \eqref{eq:biasedMomentBBM} 
        & Planar moments of the mmm-space of the BBM with cutoff \\

        $\hat{\mathbf{M}}^{k,t}_{x}$ 
        & \eqref{eq:scaled_moments} 
        & Rescaled version of the moments $\mathbf{M}^{k,t}_{x}$ \\

        $\hat{m}_p$ 
        & \eqref{eq:m_c}
        & Approximate moment of the jump measure \\

        $(\mathcal{U}_t, d_t, \vartheta_t)$
        & Definition~\ref{def:psiMmSpace}
        & $\psi$-mm space \\

        $\hat{\Mc}^{k, t}$ 
        & \eqref{eq:momentJumpMeasure1}
        & Planar moments of the $\psi$-mm space \\

        $\la{v}$, $\la{h}$, $\la{\tilde{h}}$, $\la{\Pi}$
        & \eqref{eq:notation_reversed}
        & Eigenelements of the reversed BBM \\

        $\la{\M}^{k,t}_z[1]$ 
        & \eqref{eq:biasedReversedMoment}
        & Moments of the reversed BBM \\
        \bottomrule
    \end{tabularx}
    \caption{Main notation for the BBM. 
    }
\end{table}

\section{The metric and sampling structure of CSBPs}
\label{sect:geneaalogy_CSBP}

The objective of this section is to construct the genealogy of a CSBP,
the $\psi$-metric measure space $(\mathcal{U}_t, d_t, \vartheta_t)$, and
to give an expression for its moments. We construct the genealogy as the
boundary of a time-inhomogeneous branching process called a reduced process.
We carry the construction out and compute the moments for general
reduced processes, and specialise these results to CSBPs in
Section~\ref{sec:proofCSBP}.

\subsection{Marked metric measure spaces}
\label{sec:mmmSpaces}

In this section, we recall the minimal amount of theory on metric
measure spaces needed to construct the limit and to prove convergence to
it. We refer to \cite{depperschmidt_marked_2011, Greven2009} for
extensive accounts on the general theory.

A marked metric measure space is a triple $(U, d, \nu)$ with $(U,
d)$ a complete separable metric space and $\nu$ a finite measure on
$U \times \Omega_\infty$, that encodes the spatial locations of the
elements of $U$. All the metric spaces that we will consider here
are ultrametric, and so we use the letter $U$ to denote them. We define
the marked Gromov-weak topology as the topology induced by the
functionals
\begin{equation} \label{eq:defPolynomial}
    \Phi \colon (U, d, \nu) \mapsto \int_{(U \times \Omega_\infty)^k}
    F\big( d(\mathbf{u}), \mathbf{x} \big)
    \nu^k(\diff \mathbf{u}, \diff \mathbf{x}),
\end{equation}
where $F \colon \R^{k \times k}_+ \times \Omega_\infty^k \to \R$ is continuous
bounded and with the notation
\[
    \mathbf{u} = (u_i)_{1 \le i \le k},\qquad
    d(\mathbf{u}) = (d(u_i, u_j))_{1 \le i,j \le k},\qquad
    \mathbf{x} = (x_i)_{1 \le i \le k}.
\]
Such a functional is referred to as a polynomial in \cite{Greven2009}.
Two mmm-spaces are indistinguishable in the marked Gromov-weak topology
if and only if their supports are in measure and mark preserving isometry
\cite[Theorem~1]{depperschmidt_marked_2011}, in which case they are
called isomorphic. In order for the marked Gromov-weak topology to be
separated (and actually Polish), we formally work with isomorphic classes
of mmm-spaces without mentioning it in practice.

Although marks are required to represent the location of the particles in
a branching diffusion, the genealogy of a CSBP is more naturally
constructed as plain ultrametric space (without marks). Metric spaces
with no marks can be fit into the framework above by assuming that the
mark space $E$ be made of a single element. In this case, any measure on
$U \times E$ can be identified with its projection on $U$, and we say
that $(U, d, \nu)$ is an ultrametric measure space. We simply refer to the
corresponding topology as the Gromov-weak topology.

Finally, the space of all marked ultrametric measure spaces is endowed
with its Borel $\sigma$-algebra and with a corresponding notion of weak
convergence of finite measures defined on that $\sigma$-algebra. Here, we
will need to work with a weaker notion of \emph{vague} convergence of
measures, formally introduced in \cite{foutel24vague} for instance. A
sequence of (finite) measures $(M_n)_{n \ge 1}$ on the space of
mmm-spaces converges vaguely to some limit $M$ if its restriction to the
set $\{ (U,d,\nu) : \nu(U \times \Omega_\infty) > a \}$ converges weakly
for a.e.\ $a > 0$. Finally, we denote by $\mathscr{L}(U,d,\nu)$ the law
of a random mmm-space $(U,d,\nu)$.

\subsection{Reduced processes and their boundary}
\label{sec:reducedProcess}

\paragraph{Definition.}
Consider a collection $(r_{i,\tau})_{i \ge 2, \tau > 0}$ of rates, and a
fixed time horizon $t > 0$. Throughout this section, we will use $s$ for
a time variable going in the natural direction of time, and $\tau = t-s$
for the time remaining until the horizon $t$, which is interpreted as the
distance to the leaves. We consider a branching particle system in which
each particle at time $t-\tau$ branches independently into $i \ge 2$
daughter particles at rate $r_{i,\tau}$. It will be convenient to
introduce the notation
\[
    \forall \tau > 0,\qquad r_\tau = \sum_{i \ge 2} r_{i,\tau},\qquad
    K_\tau \sim \Big(\frac{r_{i,\tau}}{r_\tau}\Big)_{i \ge 2}, \qquad
    m_\tau = \sum_{i \ge 2} (i-1) r_{i,\tau}
\]
for the total branching rate $r_\tau$, the number of offspring $K_\tau$,
and the expected jump size $m_\tau = r_\tau(\E[K_\tau]-1)$. We assume that
\[
    \forall \tau > 0,\quad \int_\tau^t m_x \diff x < \infty,
\]
which prevents the population from exploding before time $t$.

As is usually done, the branching process before time $t$ can be
constructed as a random tree $\mathcal{T}$, which is a random subset of
the Ulam--Harris labels
\[
    \mathcal{V} \coloneqq \{ \varnothing \} \cup \bigcup_{n = 1}^\infty
    \N^n.
\]
For $v, w \in \mathcal{V}$, we let $vw$ be the concatenation of $v$ and
$w$; and $v \wedge w$ be their most-recent common ancestor, that is the
longest $u$ such that $v = uv'$ and $w = uw'$ for some $v', w' \in
    \mathcal{V}$. This tree is endowed with (continuous) edge lengths, which
we represent as a collection of birth times $(\sigma_u)_{u \in
            \mathcal{T}}$ and life lengths $(\omega_u)_{u \in \mathcal{T}}$. We will
further denote by
\[
    \forall s < t,\qquad \mathcal{N}_{s,t} \coloneqq
    \{ v \in \mathcal{T} : s \in [\sigma_v, \sigma_v + \omega_v) \},
    \qquad Z_{s,t} \coloneqq \Card \mathcal{N}_{s,t}
\]
the set of individuals alive at time $s$ and the population size at that
time. It will be important to note that the process $(e^{- \int_0^s
            m_{t-y} \diff y} Z_{s,t})_{s \in [0,t)}$ is a non-negative martingale.

\paragraph{Constructing the boundary.}
If $Z_{s,t} \to \infty$ as $s \uparrow t$, the boundary of
$\mathcal{T}$ is a continuous set. In this case, we define the genealogy
of the population as a random mm-space obtained by letting $s \uparrow t$
as follows. Define the genealogical distance of the population at time $s
    < t$ as
\[
    \forall v, w \in \mathcal{N}_{s,t},\quad d_{s,t}(v, w)
    = s - (\sigma_{v \wedge w} + \omega_{v \wedge w}).
\]
(The distance between $v$ and $w$ is their coalescence time.) We endow
each $v \in \mathcal{N}_{s,t}$ with a weight that captures its asymptotic
contribution to the growth of the population. Namely, fix some $v \in
    \mathcal{V}$ and define
\[
    \forall s \in [0, t),\qquad
    Z^v_{s,t} \coloneqq \Card \{ w \in \mathcal{N}_{s,t} : w \succeq v
    \}.
\]
By the branching property, $(Z^v_{s',t})_{s' \in [s,t)}$ is distributed
as $(Z_{s',t-s})_{s' \in [0,t-s)}$ for any $v \in \mathcal{N}_{s,t}$.
Therefore, the following is well-defined as the limit of a non-negative
martingale:
\[
    \forall v \in \mathcal{N}_{s,t},\quad
    W^v_{s,t} \coloneqq \lim_{s' \to t} e^{-\int_0^{s'} m_{t-y} \diff y}
    Z^v_{s',t},
    \qquad
    W^\varnothing_{0,t} \coloneqq \lim_{s' \to t} e^{-\int_0^s m_{t-y} \diff y}
    Z_{s',t}.
\]
Note that these limits satisfy
\begin{align}
    \sum_{v \in \mathcal{N}_{s,t}} W^v_{s,t}
     & = \lim_{s' \to t} e^{-\int_0^{s'} m_{t-a} \diff a}
    \sum_{v \in \mathcal{N}_{s,t}} Z^v_{s',t} \nonumber             \\
     & = \lim_{s' \to t} e^{-\int_0^{s'} m_{t-a} \diff a} Z_{s', t}
    = W^\varnothing_{0,t}.
    \label{eq:additivityMeasure}
\end{align}
This finally allows us to define a measure $\vartheta_{s,t}$ on
$\mathcal{N}_{s,t}$ as
\[
    \vartheta_{s,t} = \sum_{v \in \mathcal{N}_{s,t}} W^v_{s,t} \delta_v.
\]

The triple $(\mathcal{N}_{s,t}, d_{s,t}, \vartheta_{s,t})$ is a random
element of the space of ultrametric measure spaces endowed with the
Gromov-weak topology. The following result shows that we can define
another random ultrametric space $(\mathcal{U}_t, d_t, \vartheta'_t)$ by
letting $s \uparrow t$. The formulation uses Gromov's box metric
$\underline{\Box}_1$, defined for instance in \cite{janson2020gromov}. It
is a distance between metric measure spaces that induces the Gromov-weak
topology, and is equivalent to the more standard Gromov--Prohorov
distance \cite{loehr2013equivalence}.

\begin{proposition} \label{prop:defUMS}
    There exists a random ultrametric measure space $(\mathcal{U}_t, d_t,
        \vartheta_t')$ such that
    \[
        \lim_{s \to t} (\mathcal{N}_{s, t}, d_{s,t}, \vartheta_{s,t})
        = (\mathcal{U}_t, d_t, \vartheta_t')
    \]
    almost surely in the Gromov-weak topology. Moreover the
    following bound holds almost surely
    \begin{equation} \label{eq:boundBoxMetric}
        \underline{\Box}_1\big( (\mathcal{N}_{s,t}, d_{s,t},
        \vartheta_{s,t}), (\mathcal{U}_t, d_t, \vartheta_t') \big)
        \le
        t-s.
    \end{equation}
\end{proposition}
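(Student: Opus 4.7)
The strategy is to exhibit, for any $s < s' < t$, a canonical coupling between $(\mathcal{N}_{s,t}, d_{s,t}, \vartheta_{s,t})$ and $(\mathcal{N}_{s',t}, d_{s',t}, \vartheta_{s',t})$ that shows the first family is Cauchy as $s \uparrow t$, and then invoke completeness of the Gromov-weak topology under $\underline{\Box}_1$ to extract the limit. I would first define the ancestor map $\pi \colon \mathcal{N}_{s',t} \to \mathcal{N}_{s,t}$ that sends a particle $w$ to its unique ancestor alive at time $s$, and use the correspondence $R = \{(\pi(w), w) : w \in \mathcal{N}_{s',t}\} \subset \mathcal{N}_{s,t} \times \mathcal{N}_{s',t}$ as the identification of the two ultrametric spaces.

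The key distortion estimate is elementary because the tree is ultrametric: for $w, w' \in \mathcal{N}_{s',t}$ with distinct ancestors $\pi(w) \neq \pi(w')$, the most recent common ancestor of $w, w'$ coincides with that of $\pi(w), \pi(w')$, so
\[
    d_{s',t}(w, w') - d_{s,t}(\pi(w), \pi(w')) = s' - s,
\]
while if $\pi(w) = \pi(w')$ then $d_{s,t}(\pi(w), \pi(w')) = 0$ and $d_{s',t}(w, w') \le s' - s$. In either case the additive distortion of $R$ is bounded by $s' - s \le t - s$.

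Next I would verify the measure-matching identity $\pi_\star \vartheta_{s',t} = \vartheta_{s,t}$ almost surely. By definition,
\[
    \pi_\star \vartheta_{s',t}
    = \sum_{v \in \mathcal{N}_{s,t}} \Big(\sum_{\substack{w \in \mathcal{N}_{s',t} \\ w \succeq v}} W^w_{t-s'} \Big) \delta_v,
\]
so the claim reduces to the cocycle identity $W^v_{t-s} = \sum_{w \in \mathcal{N}_{s',t}, w \succeq v} W^w_{t-s'}$ for every $v \in \mathcal{N}_{s,t}$. This is exactly the computation already performed in \eqref{eq:additivityMeasure}, applied to the subtree rooted at $v$ started at time $s$ with horizon $t$; the Markov and branching property guarantees it, and since there are only countably many $v$'s involved as $s$ ranges over a dense set, the identity can be arranged to hold on a single almost-sure event. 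Combining the distortion bound with the exact measure matching gives, via the standard definition of the box metric through couplings with small distortion,
\[
    \underline{\Box}_1\big( (\mathcal{N}_{s,t}, d_{s,t}, \vartheta_{s,t}),\ (\mathcal{N}_{s',t}, d_{s',t}, \vartheta_{s',t}) \big) \le s' - s.
\]

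From this Cauchy estimate and completeness of the space of (equivalence classes of) metric measure spaces under $\underline{\Box}_1$, one obtains an almost-sure Gromov-weak limit $(\mathcal{U}_t, d_t, \vartheta_t')$; letting $s' \uparrow t$ in the above inequality yields \eqref{eq:boundBoxMetric}. The main delicate point, and the one I would spend most care on, is step two: ensuring that the martingale limits $W^v_{t-s}$ can be chosen consistently across all $s$ on a single probability-one event, so that the cocycle identity and hence $\pi_\star \vartheta_{s',t} = \vartheta_{s,t}$ hold pathwise rather than only in distribution. Once that bookkeeping is in place, the rest of the argument reduces to the ultrametric distortion computation, which is essentially combinatorial.
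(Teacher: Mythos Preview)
Your proposal is correct and follows essentially the same route as the paper: both use the ancestor correspondence between $\mathcal{N}_{s,t}$ and $\mathcal{N}_{s',t}$, verify the distortion is at most $s'-s$ via the ultrametric computation, check that the measures match (the paper phrases this as the marginals of a coupling supported on the relation, you phrase it as $\pi_\star \vartheta_{s',t} = \vartheta_{s,t}$ via the cocycle identity \eqref{eq:additivityMeasure}), and then invoke completeness of $\underline{\Box}_1$ and let $s' \uparrow t$. Your attention to the measurability bookkeeping (consistency of the $W^v_{t-s}$ on a single almost-sure event) is a point the paper leaves implicit.
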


\begin{proof}
    We will show that the sequence is almost surely Cauchy for Gromov's
    $\underline{\Box}_1$ metric, which is a complete metric generating
    the Gromov-weak topology. We make use of the formulation of this
    metric using relations given in \cite[Definition~3.2]{janson2020gromov}.

    Fix $s < s' < t$. We construct a relation
    $R \subseteq \mathcal{N}_{s, t} \times \mathcal{N}_{s', t}$ as
    \[
        (v, v') \in R \iff v \preceq v'
    \]
    as well as a measure $\pi$ on $\mathcal{N}_{s, t} \times \mathcal{N}_{s', t}$
    defined as
    \[
        \pi( \{ (v, v') \} ) = \indic_{\{ v \preceq v'\}} W^{v'}_{s',t}.
    \]
    By definition of $W^v_{s,t}$, the projection of $\pi$ on
    $\mathcal{N}_{s,t}$ (resp.\ $\mathcal{N}_{s',t}$) coincides with
    $\vartheta_{s,t}$ (resp.\ $\vartheta_{s',t}$) and $\pi(R) = W_t$. This checks
    condition (3.3) of \cite[Definition~3.2]{janson2020gromov} for any
    $\epsilon > 0$.

    Let $(v,v'), (w,w') \in R$. We show that
    \[
        \abs{d_{s,t}(v,w) - d_{s',t}(v', w')} \le s' - s.
    \]
    If $v = w$, then $d_{s,t}(v,w) = 0$ and both $v'$ and $w'$ descend
    from the same ancestor $v$ at time $s$ since $(v,v'), (v,w') \in R$.
    Thus $d_{s',t}(v', w') \le s'-s$. Conversely if $v \ne w$, then $v
        \wedge w = v' \wedge w'$ and thus
    \[
        d_{s',t}(v',w') - d_{s,t}(v,w) = s'-s,
    \]
    hence the claim. This shows that condition (3.4) of
    \cite[Definition~3.2]{janson2020gromov} is fulfilled for $\epsilon =
        s'-s$. Therefore $\underline{\Box}_1( \mathcal{N}_{s,t},
        \mathcal{N}_{s',t} ) \le s'-s$ and the sequence is Cauchy almost
    surely. The bound \eqref{eq:boundBoxMetric} is obtained by letting
    $s' \to t$ in the previous inequality.
\end{proof}

\begin{rem}
    One could directly construct $(\mathcal{U}_t, d_t, \vartheta'_t)$
    as the boundary of $\mathcal{T}$ (that is, the set of infinite rays
    $(u_1, u_2, \dots)$) endowed with a measure obtained from
    Caratheodory's extension theorem, as is for instance done in
    \cite[Section~2.1]{Duchamps2018}. This construction would, moreover,
    define a planar ordering on the limiting tree. However, it is not
    straightforward to compute the moments directly on the continuous
    object, whereas our construction will allow us to carry out the
    computation on the discrete trees and then take a limit.
\end{rem}

\subsection{Moments of reduced processes}

Recall from Section~\ref{sec:heuristicMoments} that we defined the $k$-th
moment of the random metric measure space $(\mathcal{U}_t, d_t,
    \vartheta'_t)$ as the measure on $\R_+^{k\times k}$ whose integral
against some measurable test function $F \colon \R_+^{k\times k} \to \R_+$
is
\[
    \E\bigg[ \int_{\mathcal{U}_t^k}
    F\big( d_t(\mathbf{u}) \big) \vartheta'_t{}^{\otimes k}(\diff \mathbf{u})
    \bigg],
    \qquad \mathbf{u} = (u_i)_{i \le k},\quad
    d_t(\mathbf{u}) = (d_t(u_i, u_j))_{i,j \le k}.
\]
The following result gives a recursive formula to compute the $k$-th
moment in terms of moments of lower order, which proves
\eqref{eq:momentPsiMM}.

\begin{proposition}[Moments of $(\mathcal{U}_t, d_t, \vartheta'_t)$]
    \label{cor:momentsReduced}
    Suppose that, for each $t > 0$,
    \begin{equation} \label{eq:conditionMomentReduced}
        \forall k \ge 2,\quad \sup_{0 < s \le t} r_{t-s}\E[K_{t-s}^{(k)}]
        e^{-(k-1)\int_0^s m_{t-x} \diff x} < \infty,
        \qquad
        \int_0^t m_s \diff s = \infty.
    \end{equation}
    Define a family of measure $(\mathcal{M}^{k,t})_{t \ge 0}$
    on $\mathbb{U}_k$ inductively on $k$ as follows. For $k = 1$,
    $\Mc^{1,t}[1] = 1$. For $k \ge 2$ and any functional $G$ of the
    product form \eqref{eq:productFunctional},
    \begin{equation} \label{eq:recursionReduced}
        \Mc^{k,t}[G] = \frac{1}{\abs{c}!}\int_0^t r_{s}\E[K^{(\abs{c})}_s]
        f(s) e^{-(k-1) \int_s^t m_x \diff x}
        \prod_{i=1}^{\abs{c}} \Mc^{c_i,s}[F_i] \diff s.
    \end{equation}
    Then, for any $F \colon \mathbb{U}_k \to \R_+$
    \begin{equation} \label{eq:momentsReduced-1}
        \E\Big[
            \int_{\mathcal{U}_t^k} F\big( d_t(\mathbf{u}) \big)
            {\vartheta'_t}^{\otimes k}(\diff \mathbf{u}) \big)
            \Big]
        =
        \sum_{P} \mathcal{M}^{k,t}[F \circ P],
    \end{equation}
    where $d_t(\mathbf{u}) = (d_t(u_i,u_j))_{i,j\le k}$ and the sum is
    taken over all permutations $P$ of $\{1,\dots,k\}$.
\end{proposition}

This result will follow by deriving a similar formula for the (planar)
moments of the reduced process at time $s < t$, and then letting $s
    \uparrow t$. By analogy with \eqref{eq:biasedMomentBBM}, let us define a
measure on $\mathbb{U}_k$ such that
\begin{equation} \label{eq:defPlanarMoment}
    \Mc^{k,s,t}[F] =
    e^{- k \int_0^s m_{t-y} \diff y}
    \E\bigg[
        \sum_{\substack{v_1, \dots, v_k \in \mathcal{N}_{s,t}\\ v_1 < \dots < v_k}}
        F\big(d_{s,t}(\mathbf{v})\big)
        \bigg]
\end{equation}
for all non-negative $F \colon \mathbb{U}_k \to \R$, and where
\[
    \mathbf{v} = (v_i)_{i \le k},\qquad
    d_{s,t}(\mathbf{v}) = \big( d_{s,t}(v_i, v_j) \big)_{i,j \le k}.
\]

\begin{rem} \label{rem:planarMoments}
    For combinatorial reasons, it is easier to work with planar objects
    and to define moments as measures on $\mathbb{U}_k$. However, our
    construction of $(\mathcal{U}_t, d_t, \vartheta'_t)$ does not endow
    the metric measure space with a notion planar ordering. As an
    intermediate step, we will show that the measure $\mathcal{M}^{k,t}$
    arising in \eqref{eq:momentsReduced-1} is the limit as $s \uparrow t$
    of the planar moment $\mathcal{M}^{k,s,t}$ of the reduced process.
    Thus, $\mathcal{M}^{k,t}$ can be interpreted as a form of planar
    moment of $(\mathcal{U}_t, d_t, \vartheta'_t)$ and
    \eqref{eq:momentsReduced-1} simply indicates that the (unplanar)
    moments are obtained from $\mathcal{M}^{k,t}$ by relabeling the
    leaves uniformly.
\end{rem}

\begin{lemma}[Moments of a reduced process] \label{prop:many2fewReduced}
    Fix $k \ge 1$ and let $G$ be a functional of the product form
    \eqref{eq:productFunctional}. Then, for all $s < t$,
    \begin{equation} \label{eq:momentFiniteReduced}
        \Mc^{k,s,t}[G] = \frac{1}{\abs{c}!} \int_0^s r_{t-x}\E[K_{t-x}^{(\abs{c})}]
        f(s-x) e^{-(k-1) \int_0^x m_{t-y} \diff y}
        \prod_{i=1}^{\abs{c}} \Mc^{c_i,s-x,t-x}[F_i]
        \diff x.
    \end{equation}
\end{lemma}

\begin{proof}
    Fix a composition $c$ of $k$ with $\abs{c} = d$ blocks, and let us
    write $\pi$ for the partition of $\{1,\dots,k\}$ made of blocks of
    consecutive integers with respective lengths $(c_1,\dots,c_d)$.
    If $w = v_1 \wedge \dots \wedge v_k$, then $c(d_{s,t}(\mathbf{v})) = c$
    if and only if there exist some labels $p_1 < \dots < p_d \le K_w$
    such that, for each $i \le d$, all the leaves in $\pi_i$ descend from
    $wp_i$ (that is, $wp_i \preceq v_j$, $j \in \pi_i$), where we recall
    that $wp_i$ is the $p_i$-th child of $w$. This allows us to re-write
    the sum in the right-hand side of \eqref{eq:defPlanarMoment} as
    \begin{multline*}
        \sum_{\substack{(v_j) \in \mathcal{N}_{s,t} \\ v_1 < \dots < v_k}}
        f(\tau(d_{s,t}(\mathbf{v}))) \indic_{\{ c(d_{s,t}(\mathbf{v})) = c\}}
        \prod_{i=1}^{\abs{c}} \Big(
        F_i(U_i(d_{s,t}(\mathbf{v})))
        \prod_{j \in \pi_i} W^{v_j}_{s,t}
        \Big)
        \\
        =
        \sum_{w} \indic_{\{ \sigma_w + \omega_w < s\}}
        f(s - \sigma_w - \omega_w)
        \sum_{\substack{p_1, \dots, p_{\abs{c}} = 1 \\
                p_1 < \dots < p_{\abs{c}}}}^{K_w}
        \prod_{i=1}^{\abs{c}}
        \Bigg(
        \sum_{\substack{v^i_1 < \dots < v^i_{c_i} \in \mathcal{N}_{s,t} \\
        wp_i \preceq v^i_j}}
        F_i(d_{s,t}(\mathbf{v}^i))
        \prod_{j \in \mathbf{v}^i} W^{v_j}_{s,t}
        \Bigg),
    \end{multline*}
    where we used the notation $\mathbf{v}^i = (v^i_j)_j$, and that with
    this notation $U_i(d_{s,t}(\mathbf{v})) = d_{s,t}(\mathbf{v}^i)$.
    We will use that by definition of $W^{v}_{s,t}$ and the branching
    property, for $v \in \mathcal{N}_{s,t}$,
    \[
        W^{v}_{s,t}
        \overset{\mathrm{(d)}}{=}
        W^{\varnothing}_{0,t-s} e^{- \int_0^s m_{t-y} \diff y}.
    \]
    Let us denote by $K_w$ the number of offspring of individual $w$ (its
    out-degree). Applying the strong Markov property at $\sigma_w +
    \omega_w$ and the branching property further yields that
    \begin{align*}
        \E\bigg[
         & \sum_{\substack{(v_j) \in \mathcal{N}_{s,t} \\v_1 < \dots < v_k}}
        f(\tau(U_s)) \indic_{\{ c(U_s) = c\}}
        \prod_{i=1}^{\abs{c}} \Big(
        F_i(U_i(d_{s,t}(\mathbf{v})))
        \prod_{j \in \pi_i} W^{v_j}_{s,t}
        \Big)
        \bigg]                                         \\
         & = \E\bigg[
        \sum_{w} \indic_{\{ \sigma_w + \omega_w < s\}} f(s - \sigma_w - \omega_w)
        \frac{K_w^{(\abs{c})}}{\abs{c}!}
        \prod_{i=1}^{\abs{c}}
        e^{-c_i \int_0^{\sigma_w+\omega_w} m_{t-y} \diff y}
        \Mc^{c_i, s-\sigma_w-\omega_w, t-\sigma_w-\omega_w}[F_i]
        \bigg]                                         \\
         & =
        \int_0^s \frac{1}{\abs{c}!} e^{\int_0^{s'} m_{t-y} \diff y} r_{t-s'}
        \E[K_{t-s'}^{(\abs{c})}] f(s-s')
        \prod_{i=1}^{\abs{c}}
        e^{-c_i \int_0^{s'} m_{t-y} \diff y}
        \Mc^{c_i,s-s',t-s'}[F_i] \diff s'              \\
         & =
        \int_0^s \frac{1}{\abs{c}!} e^{-(k-1)\int_0^{s'} m_{t-y} \diff y} r_{t-s'}
        \E[K_{t-s'}^{(\abs{c})}] f(s-s')
        \prod_{i=1}^{\abs{c}}
        \Mc^{c_i,s-s',t-s'}[F_i] \diff s',
    \end{align*}
    where we have used that $K_w \sim K_{\sigma_w + \omega_w}$, and that
    the death times in the population $(\sigma_w + \omega_w)_{w \in
                \mathcal{V}}$ is a point process with intensity $r_{t-s'}
        e^{\int_0^{s'} m_{t-y} \diff y} \diff s'$.
\end{proof}

At several locations, we will need to go from convergence of moments
evaluated at product-form functionals to convergence of moments for any
continuous bounded functional. A slight inconvenience in doing
so is that the map $U \mapsto (\tau(U), (U_i)_i)$
(see Definition~\ref{def:composition})
is not continuous
because of the possible accumulation of branch points in the limit. (It
is discontinuous at any $U$ whose first branch point is non-binary.) 
This
issue is resolved by the following argument that we isolate as a lemma.

\begin{lemma} \label{lem:product2Tree}
    Let $(\mathcal{M}_n)_{n \ge 1}$ be a sequence of measures on
    $\mathbb{U}_k$ such that, for any functional $G$ of the product
    form \eqref{eq:product-form1}, there exists $m_G \in \R$ with
    \[
        \mathrm{(i)}\quad \lim_{n \to \infty} \mathcal{M}_n[G] = m_G
        ,\qquad \mathrm{(ii)}\quad
        \adjustlimits\lim_{\eta \to 0} \limsup_{n \to \infty}
        \mathcal{M}_n\big(
        \{ U : \tau(U) - \max_i \tau(U_i(U)) \le \eta \}
        \big)
        = 0.
    \]
    Then there exists a measure $\mathcal{M}$ on $\mathbb{U}_k$ such that
    $(\mathcal{M}_n)_{n \ge 1}$ converges weakly to $\mathcal{M}$ and
    $m_G = \mathcal{M}[G]$.
\end{lemma}

\begin{proof}
    Let $\Phi \colon U \mapsto (\tau(U), (U_i(U))_i)$. Its range is the
    set
    \begin{equation} \label{eq:imageDecomposition}
        D = \Big\{
        (\tau, (U_i)_i) : U_i \in \mathbb{U}_{c_i},\, \sum_i c_i = k,\,
        \tau > \max_i \tau(U_i)
        \Big\}.
    \end{equation}
    A reformulation of the first assumption is that the pushforward of
    $(\mathcal{M}_n)_{n \ge 1}$ by $\Phi$ converges weakly to some limit
    $\mathcal{M}'$, which is a measure on the closure of $D$ (for the
    product topology) and is such that $\mathcal{M}'[G] = m_G$. The
    boundary of $D$ is $\partial D = \{ (\tau, (U_i)_i) : \tau = \max_i
        \tau(U_i) \}$, and thus second assumption ensures that
    $\mathcal{M}'(\partial D) = 0$.

    Now note that $\Phi$ has an explicit inverse $\Phi^{-1} \colon D \to
        \mathbb{U}_k$ given by
    \begin{equation} \label{eq:inverseDistanceMatrix}
        \Phi^{-1} \colon \big( \tau, (U_i)_i \big) \mapsto
        (U_{m,n}) =
        \begin{dcases}
            ( U_i )_{m,n} & \text{if $m, n \in \pi_i$,} \\
            \tau          & \text{else,}
        \end{dcases}
    \end{equation}
    where $\pi = (\pi_i)_i$ is the partition with blocks made of
    consecutive integer and whose $i$-th block has length $c_i$. Clearly,
    $\Phi^{-1}$ is continuous and thus $(\mathcal{M}_n)_{n \ge 1}$
    converges weakly to the pushforward $\mathcal{M}$ of $\mathcal{M}'$
    by $\Phi^{-1}$ (which is well-defined since $\mathcal{M}'(\partial D)
        = 0$). Since $\Phi$ is bijective, $\mathcal{M}[G] = \mathcal{M}'[G]$.
\end{proof}

We now go back to computing the moments of $(\mathcal{U}_t, d_t,
    \vartheta'_t)$ and prove an intermediate result.

\begin{lemma} \label{lem:convReducedPlanarMoment}
    Under the assumptions of Proposition~\ref{cor:momentsReduced}, for any
    $k \ge 1$,
    \[
        \lim_{s \uparrow t} \Mc^{k,s,t} = \Mc^{k,t}
    \]
    weakly as measures on $\mathbb{U}_k$. Moreover, the limit $\Mc^{k,t}$
    is uniquely characterised by \eqref{eq:recursionReduced}.
\end{lemma}

\begin{proof}
    Let us show by induction that, for all $p \ge 1$,
    \begin{equation} \label{eq:inductionConvergence}
        \Mc^{p,t} \coloneqq \lim_{s \to t} \Mc^{p,s,t}
    \end{equation}
    exists in the weak topology, with $\Mc^{p,t}$ satisfying
    \eqref{eq:recursionReduced}, and that
    \begin{equation} \label{eq:boundedMassReduced}
        \forall T > 0,\quad \sup_{0 \le s < t \le T} \Mc^{p,s,t}[1] < \infty.
    \end{equation}

    For $p = 1$, by the martingale property
    \[
        \mathcal{M}^{1,s,t}[1] = e^{-\int_0^s m_{t-y} \diff y}
        \E[Z_{s,t}] = 1,
    \]
    hence both properties.

    Now, fix $k \ge 2$ and assume that these properties hold for all $p <
        k$. The recursive formula for the moments of the reduced process,
    Lemma~\ref{prop:many2fewReduced}, entails that
    \begin{equation} \label{eq:integrandMomentsReduced}
        \Mc^{k,s,t}[G]
        = \frac{1}{\abs{c}!}
        \int_0^s r_{t-x} \E[K_{t-x}^{(\abs{c})}]
        f(s-x) e^{-(k-1)\int_0^x m_{t-y} \diff y}
        \prod_{i=1}^{\abs{c}} \Mc^{c_i, s-x, t-x}[F_i] \diff x.
    \end{equation}
    By assumption~\eqref{eq:conditionMomentReduced} and
    \eqref{eq:boundedMassReduced}, the integrand on the right-hand side
    is bounded. Summing over all compositions with $f, F_i \equiv 1$
    shows that \eqref{eq:boundedMassReduced} holds for $p=k$. Moreover,
    by induction \eqref{eq:inductionConvergence}, the integrand of
    \eqref{eq:integrandMomentsReduced} converges pointwise as $s \uparrow
        t$. The dominated convergence theorem thus shows that the
    limit as $s \uparrow t$ of $(\mathcal{M}^{k,s,t}[G])_{s < t}$ exists
    for all $G$ of the product form, and is given by the right-hand side
    of \eqref{eq:recursionReduced}. By Lemma~\ref{lem:product2Tree}, this
    entails that \eqref{eq:inductionConvergence} also holds for $p = k$.
    (The second assumption of Lemma~\ref{lem:product2Tree} is a direct
    consequence of the fact that $\tau(U)$ has a density under
    $\mathcal{M}^{p,t}$.)
\end{proof}

\begin{proof}[Proof of Proposition~\ref{cor:momentsReduced}]
    Re-ordering the sum in the definition of $\mathcal{M}^{k,s,t}$ from
    \eqref{eq:defPlanarMoment},
    \begin{equation*}
        e^{-k \int_0^s m_{t-y} \diff y}
        \E\bigg[\sum_{\substack{v_1, \dots, v_k \in \mathcal{N}_{s,t}\\
                    \text{$(v_i)$ distinct}}}
            F(d_{s,t}(\mathbf{v}))
            \bigg]
        = \sum_{P} \Mc^{k,s,t}[F \circ P],
    \end{equation*}
    where the sum is taken over all partitions of $\{1,\dots,k\}$.
    Therefore, using Lemma~\ref{lem:convReducedPlanarMoment}, it is
    sufficient to prove that for any continuous bounded $F \colon
        \mathbb{U}_k \to \R$
    \begin{equation} \label{eq:comparisonFactReduced}
        \lim_{s \to t}
        e^{-k \int_0^s m_{t-y} \diff y}
        \E\bigg[\sum_{\substack{v_1, \dots, v_k \in \mathcal{N}_{s,t}\\
                    \text{$(v_i)$ distinct}}}
            F(d_{s,t}(\mathbf{v}))
            \bigg]
        =
        \E\Big[
            \int_{\mathcal{U}_t^k} F\big( d_t(\mathbf{v}) \big)
            {\vartheta'_t}^{\otimes k}(\diff \mathbf{v})
            \Big].
    \end{equation}
    Without loss of generality, by Portmanteau's theorem, we can assume
    that $F$ is Lipschitz.

    First, by \eqref{eq:boundedMassReduced},
    \begin{equation} \label{eq:uniformMomentReduced}
        \sup_{s < t} \E\Big[
            e^{-k \int_0^s m_{t-y} \diff y}
            Z_{s,t}^k
            \Big] < \infty,
    \end{equation}
    and therefore, since $e^{-\int_0^s m_{t-y} \diff y} Z_{s,t} \to
    W^\varnothing_{0,t}$
    as $s \to t$,
    \[
        \forall t > 0,\quad \E[W^\varnothing_{0,t}] = 1, 
        \qquad \E[(W^\varnothing_{0,t})^k] < \infty.
    \]
    Recall that $(W^v_{s,t})_{v \in \mathcal{N}_{s,t}}$ are i.i.d.\ and
    distributed as $e^{-\int_0^s m_{t-y} \diff y} W^\varnothing_{0,t-s}$ and thus
    \[
        e^{-k \int_0^s m_{t-y} \diff y}
        \E\bigg[\sum_{\substack{v_1, \dots, v_k \in \mathcal{N}_{s,t}\\
                    \text{$(v_i)$ distinct}}}
            F(d_{s,t}(\mathbf{v}))
            \bigg]
        =
        \E\bigg[\sum_{\substack{v_1, \dots, v_k \in \mathcal{N}_{s,t}\\
                    \text{$(v_i)$ distinct}}}
            F(d_{s,t}(\mathbf{v}))
            \prod_{i=1}^k W^{v_i}_{s,t}
            \bigg].
    \]
    There are $Z^k_{s,t} - Z^{(k)}_{s,t} = O(Z^{k-1}_{s,t})$ ways of
    choosing $k$ elements in $\mathcal{N}_{s,t}$ that are not all
    distinct. Therefore,
    \begin{align*}
        \E\bigg[
        \sum_{\substack{v_1, \dots, v_k \in \mathcal{N}_{s,t} \\
                    (v_i) \text{ distinct}}}
         & F\big( d_{s,t}(\mathbf{v}) \big)
            \prod_{i=1}^k W^{v_i}_{s,t}
        \bigg]                                                \\
         & =
        \E\bigg[
            \sum_{v_1, \dots, v_k \in \mathcal{N}_{s,t}}
            F\big( d_{s,t}(\mathbf{v}) \big)
            \prod_{i=1}^k W^{v_i}_{s,t}
            \bigg]
        + O\Big(
        \E\Big[ e^{-k \int_0^s m_{t-y} \diff y} (Z_{t,s}^k - Z^{(k)}_{t,s}) \Big]
        \Big)                                                 \\
         & =
        \E\bigg[
            \sum_{v_1, \dots, v_k \in \mathcal{N}_{s,t}}
            F\big( d_{s,t}(\mathbf{v}) \big)
            \prod_{i=1}^k W^{v_i}_{s,t}
            \bigg]
        + e^{- \int_0^s m_{t-y} \diff y}
        O\Big(
        e^{-(k-1) \int_0^s m_{t-y} \diff y} \E[Z_{t,s}^{k-1}]
        \Big).
    \end{align*}
    The second term vanishes as $s \to t$ by \eqref{eq:uniformMomentReduced}
    and the second part of assumption \eqref{eq:conditionMomentReduced}.
    For the first term, we know from applying
    Lemma~\ref{lem:boundLipschitz} to $(\mathcal{N}_{s,t},
        d_{s,t}, \vartheta_{s,t})$ and $(\mathcal{U}_t, d_t, \vartheta'_t)$
    that
    \begin{multline*}
        \abs[\bigg]{
            \E\bigg[
                \sum_{v_1, \dots, v_k \in \mathcal{N}_{s,t}}
                F\big( d_{s,t}(\mathbf{v}) \big) \prod_{i=1}^k
                W^{v_i}_{s,t}
                -
                \int_{\mathcal{U}_t^k} F\big( d_t(\mathbf{v}) \big)
                {\vartheta'_t}^{\otimes k}(\diff \mathbf{v})
                \bigg]
        } \\
        \le C_k \E\Big[
            \underline{\Box}_1\Big( (\mathcal{N}_{s,t}, d_{s,t},
            \vartheta_{s,t}), (\mathcal{U}_t, d_t, \vartheta_t') \Big)
            \Big],
    \end{multline*}
    and that the right-hand side vanishes as $s \uparrow t$ by
    Proposition~\ref{prop:defUMS}. Altogether, these steps show
    \eqref{eq:comparisonFactReduced} and prove the result.
\end{proof}

\subsection{The genealogy of CSBPs}
\label{sec:proofCSBP}

We can now provide the definition of the genealogy of a CSBP. Recall that
a branching mechanism is a function of the form
\[
    \forall \theta \ge 0,\quad \psi(\theta) = a \theta + \tfrac{b}{2} \theta^2 +
    \int_0^\infty \big( e^{-\theta z} - 1 + z\theta \big)
    \Lambda(\diff z).
\]
Under Grey's condition \eqref{eq:greyCondition}, we defined a limiting
Laplace exponent $\bar{u}_t$ in \eqref{eq:laplaceExponent}. Under that
condition, recall that we can also define for each $t > 0$ a random
variable $\bar{\Xi}_t$ corresponding to the law of the CSBP ``started
from $0$'' and conditioned on survival. It is defined through its Laplace
transform as
\begin{equation}  
    \forall \theta \ge 0,\quad \bar{u}_t \E[ 1-e^{-\theta \bar{\Xi}_t} ] =
    u_t(\theta).
\end{equation}
We will refer to this random variable as being distributed under the
\emph{entrance law} of the CSBP. The reduced process in the following
definition was originally constructed in \cite{LeGall2002}, Section~2.7.

\begin{definition}[$\psi$-metric measure space]
    \label{def:psiMmSpace}
    Let $\psi$ be a branching mechanism \eqref{eq:branching-mechanism}
    satisfying Grey's condition \eqref{eq:greyCondition} and let
    $(r_{i,\tau})_{i \ge 2}$ be such that
    \begin{equation}  \label{eq:CSBPReducedBis}
        \forall \theta \ge 0,\quad
        \sum_{i \ge 2} r_{i,\tau} \theta^i
        = \theta \Big( \psi'(\bar{u}_{\tau}) - \frac{\psi(\bar{u}_{\tau}) -
            \psi((1-\theta)\bar{u}_{\tau})}{\theta \bar{u}_{\tau}}
        \Big).
    \end{equation}
    Let $(\mathcal{U}_t, d_t, \vartheta'_t)$ be the metric measure space
    constructed from the rates \eqref{eq:CSBPReducedBis} in
    Proposition~\ref{prop:defUMS}. The $\psi$-metric measure space
    ($\psi$-mm space) at time $t$ is the random metric measure space
    $(\mathcal{U}_t, d_t, \vartheta_t)$, where  $\vartheta_t = e^{-a t}
        \vartheta'_t / \bar{u}_t$.
\end{definition}

Proposition~\ref{prop:momentMmSpace}, which provides a recursive expression for
the moments of a $\psi$-mm space, now follows by specialising
Proposition~\ref{cor:momentsReduced} to the rates defined in
\eqref{eq:CSBPReducedBis}.

\begin{proof}[Proof of Proposition~\ref{prop:momentMmSpace}]
    Recall that $K_\tau$ denotes a random variable with distribution
    $(r_{i,\tau} / r_{\tau})_{i \ge 2}$ and denote by
    $G_\tau(\theta) = r_\tau \E[ \theta^{K_\tau} ]$ the
    generating function in \eqref{eq:CSBPReducedBis}. We obtain the
    factorial moments of $K_\tau$ by differentiating $G_\tau$. First,
    \[
        r_\tau = G_\tau(1)
        = \psi'(\bar{u}_\tau) - \frac{\psi(\bar{u}_\tau) - \psi(0)}{\bar{u}_\tau}
    \]
    and
    \[
        \partial_\theta G_\tau(\theta) = \psi'(\bar{u}_\tau) -
        \psi'((1-\theta)\bar{u}_\tau),
    \]
    so that
    \[
        m_\tau = r_\tau\E[K_\tau-1] = \partial_\theta G_\tau(1) - G_\tau(1) =
        \frac{\psi(\bar{u}_\tau)}{\bar{u}_\tau} - \psi'(0) =
        \frac{\psi(\bar{u}_\tau)}{\bar{u}_\tau} - a.
    \]
    By taking further derivatives we obtain
    \begin{equation} \label{eq:momentCSBP-1}
        r_\tau\E[K^{(k)}_\tau] = \partial_\theta^k G_\tau(1)
        = (-1)^k \bar{u}_\tau^{k-1} \psi^{(k)}(0)
        = \bar{u}_\tau^{k-1}\Big(\indic_{\{ k = 2\}}b
        + \int_0^\infty x^k \Lambda(\diff x)\Big).
    \end{equation}
    Moreover,
    \begin{equation*}
        \forall 0 < s \le t,\quad \int_s^t \frac{\psi(\bar{u}_x)}{\bar{u}_x} \diff x =
        - \int_s^t \frac{\bar{u}'_x}{\bar{u}_x} \diff x = \log \bar{u}_s
        - \log \bar{u}_t,
    \end{equation*}
    from which we deduce that
    \begin{equation}\label{eq:compensationReduced}
        \forall s < t,\quad e^{-\int_0^s m_{t-x} \diff x} =
        \frac{\bar{u}_t}{\bar{u}_{t-s}} e^{sa}.
    \end{equation}
    It can be directly checked from this expression using that $\bar{u}_t
        \to \infty$ as $t \to 0$ that the two conditions in
    \eqref{eq:conditionMomentReduced} are fulfilled.

    Let $\mathcal{M}^{k,t}$ be the planar moment of $(\mathcal{U}_t, d_t,
        \vartheta'_t)$ as constructed in Proposition~\ref{cor:momentsReduced}
    and define
    \[
        \hat{\mathcal{M}}^{k,t} = \frac{e^{-ka t}}{\bar{u}_t^{k-1}}
        \mathcal{M}^{k,t}.
    \]
    By Proposition~\ref{cor:momentsReduced}, since $\vartheta_t$ is
    obtained by dividing the mass of $\vartheta'_t$ by $e^{at} \bar{u}_t$,
    \begin{multline*}
        \E\Big[
            \int_{\mathcal{U}_t^k} F\big( d_t(\mathbf{v}) \big)
            \vartheta_t^{\otimes k}(\diff \mathbf{v}) \big)
            \Big]
        =
        \frac{e^{-ka t}}{\bar{u}_t^k}
        \E\Big[
            \int_{\mathcal{U}_t^k} F\big( d_t(\mathbf{v}) \big)
            {\vartheta'_t}^{\otimes k}(\diff \mathbf{v}) \big)
            \Big] \\
        = \sum_{P} \frac{e^{-ka t}}{\bar{u}_t^k}
        \mathcal{M}^{k,t}[F \circ P]
        = \frac{1}{\bar{u}_t} \sum_{P} \hat{\mathcal{M}}^{k,t}[F \circ P].
    \end{multline*}
    Finally, by the recursive expression for $\mathcal{M}^{k,t}$ in
    \eqref{eq:recursionReduced}, \eqref{eq:momentCSBP-1} and
    \eqref{eq:compensationReduced} above,
    \begin{align*}
        \hat{\Mc}^{k,t}[G]
         & = \frac{e^{-k a t}}{\bar{u}_t^{k-1}} \frac{1}{\abs{c}!}
        \int_0^t f(t-s) r_{t-s}\E[K^{(\abs{c})}_{t-s}]
        e^{-(k-1) \int_0^s m_{t-x} \diff x}
        \prod_{i=1}^{\abs{c}} \Mc^{c_i, t-s}[F_i] \diff s          \\
         & =
        \frac{e^{-k a t}}{\bar{u}_t^{k-1}}
        \frac{m_{\abs{c}}}{\abs{c}!} \int_0^t f(t-s) \bar{u}_{t-s}^{\abs{c}-1}
        \Big(\frac{\bar{u}_t e^{sa}}{\bar{u}_{t-s}}\Big)^{k-1}
        \prod_{i=1}^{\abs{c}} \bar{u}_{t-s}^{c_i-1} e^{c_i a (t-s)}
        \hat{\Mc}^{c_i,t-s}[F_i] \diff s
        \\
         & = \frac{m_{\abs{c}}}{\abs{c}!} \int_0^t f(t-s) e^{-s a}
        \prod_{i=1}^{\abs{c}} \hat{\Mc}^{c_i,t-s}[F_i] \diff s.
        \qedhere
    \end{align*}
\end{proof}

\subsection{Perturbation results}

When removing the spatial cutoff of the branching diffusion by letting $A
    \to \infty$, we will need a perturbation result for the genealogies of
CSBPs. So far, we implicitly assume that $\int z \wedge z^2
    \Lambda(\diff z) < \infty$ to express the branching mechanism as
\eqref{eq:branching-mechanism}. For the perturbation result we will need
to relax this assumption and consider general branching mechanisms of
the form
\begin{equation} \label{eq:generalBranchingMechanism}
    \psi(\theta) = \widetilde{a} \theta + \frac{b}{2} \theta^2 +
    \int_{(0, \infty)} \big( e^{-z \theta} - 1 - \indic_{\{ z \le 1 \}}
    \theta z \big) \Lambda(\diff z)
\end{equation}
with $\widetilde{a} \in \R$, $b \ge 0$, and $\Lambda$ such that
\[
    \int_{(0,\infty)} \big( 1 \wedge z^2 \big) \Lambda(\diff z) < \infty.
\]

\begin{proposition} \label{prop:equivalenceBranching}
    Let $(\psi_j)_{j \ge 1}$ be a sequence of branching mechanisms of the
    form \eqref{eq:generalBranchingMechanism} and $(\Xi_{t,j}(z))_{j \ge
                1}$ be the corresponding $\psi_j$-CSBPs, started from $z > 0$. The
    following statements are equivalent.
    \begin{enumerate}
        \item[(i)] There exists $\psi$ such that, for all $\theta \ge 0$,
              $\psi_j(\theta) \to \psi(\theta)$ as $j \to \infty$.
        \item[(ii)] There is some $z > 0$ such that,
              for all $t > 0$, $\Xi_{t,j}(z) \to \Xi_t(z)$ as $j \to
                  \infty$ in distribution for some limit $\Xi_t(z)$.
    \end{enumerate}
    If these statements are fulfilled, $\psi$ is a branching mechanism of
    the form \eqref{eq:generalBranchingMechanism} and $(\Xi_t)_{t \ge 0}$
    is the corresponding $\psi$-CSBP. Suppose in addition that
    $(\psi_j)_j$ and $\psi$ fulfill Grey's condition
    \eqref{eq:greyCondition} and let $(\bar{u}_{t,j})_j$ and
    $(\bar{\Xi}_{t,j})_j$ (resp.\ $\bar{u}_t$ and $\bar{\Xi}_t$) be defined
    as in \eqref{eq:laplaceExponent} from $(\psi_j)_j$ (resp.\ $\psi$).
    If
    \[
        \forall t > 0,\qquad
        \lim_{j \to \infty} \bar{u}_{t,j} = \bar{u}_t,
        \qquad
        \lim_{j \to \infty} \E\big[ \bar{\Xi}_{t,j} \big]
        = \E\big[ \bar{\Xi}_t \big] < \infty,
    \]
    then
    \[
        \lim_{j \to \infty} (\mathcal{U}_{t,j}, d_{t,j}, \vartheta_{t,j})
        = (\mathcal{U}_t, d_t, \vartheta_t),
    \]
    in distribution for the Gromov-weak topology, where
    $(\mathcal{U}_{t,j}, d_{t,j}, \vartheta_{t,j})_j$ and
    $(\mathcal{U}_t, d_t, \vartheta_t)$ are the corresponding mm-spaces.
\end{proposition}

Before proving Proposition~\ref{prop:equivalenceBranching}, we will need
the following preliminary result.

\begin{lemma} \label{prop:convergenceCSBP}
    Let $(Z^j_{s,t};\, s < t)_{j \ge 1}$ be a sequence of reduced
    processes, and let $(\mathcal{U}_t^j, d_t^j, {\vartheta'_t}^j)_{j \ge 1}$
    be the corresponding mm-spaces from Proposition~\ref{prop:defUMS}.
    Suppose that there exists a reduced process $(Z_{s,t};\, s < t)$ such
    that:
    \begin{enumerate}
        \item For each $\tau > 0$, $((r^j_{i,\tau})_{i \ge 2})_{j \ge 1}$
              converges in $\ell^1$ as $j \to \infty$ to the sequence
              $(r_{i,\tau})_{i \ge 2}$. Moreover,
              \[
                  \forall \epsilon > 0,\quad
                  \sup_{j \ge 1} \sup_{\tau \in (\epsilon, t)} r^j_{\tau} < \infty.
              \]
        \item For each $t > 0$, $W^j_t \to W_t$ in distribution as $j \to \infty$.
    \end{enumerate}
    Then $(\mathcal{U}_t^j, d_t^j, {\vartheta'_t}^j)_{j \ge 1}$ converges in
    distribution to $(\mathcal{U}_t, d_t, \vartheta'_t)$ in the
    Gromov-weak topology.
\end{lemma}

\begin{proof}
    By the uniform bound in \eqref{eq:boundBoxMetric}, it is sufficient
    to show that, for all fixed $s < t$, $(\mathcal{N}^j_{s,t},
        d_{s,t}^j, \vartheta^j_{s,t}) \to (\mathcal{N}_{s,t}, d_{s,t},
        \vartheta_{s,t})$ as $j \to \infty$. Let $T_j$ be the first branch
    time in $(Z^j_{s,t}; s < t)$ and $K_j$ be the number of offspring
    produced at time $T_j$. By construction, their joint law is given by
    \[
        \P(T_j \in \diff s, K_j = k) =
        \indic_{s \in (0, t)} e^{- \int_0^s r^j_{t-x} \diff x}
        r^j_{k,t-s} \diff s.
    \]
    Our assumptions and the dominated convergence theorem entail that
    \[
        \forall s \in (0, t),\quad
        \lim_{j \to \infty}
        e^{- \int_0^s r^j_{t-x} \diff x} r^j_{k,t-s}
        =
        e^{- \int_0^s r_{t-x} \diff x} r_{k,t-s}
    \]
    and therefore $(T_j, K_j) \to (T, K)$ in distribution as $j \to
        \infty$, where $T$ and $K$ are the first branch time and number of
    offspring of the limiting reduced process $(Z_{s,t}; s < t)$.

    Iterating over finitely many branch points, we see that
    \begin{equation} \label{eq:convergenceCSBP-1}
        \forall s < t,\quad
        \lim_{j \to \infty}
        \big(
        \mathcal{N}^j_{s,t}, (\sigma^j_v, \omega^j_v)_{v : \sigma^j_v \le s}
        \big)
        =
        \big(
        \mathcal{N}_{s,t}, (\sigma_v, \omega_v)_{v : \sigma_v \le s}
        \big)
    \end{equation}
    in distribution, where we view $\mathcal{N}_{s,t}^j$ as a random
    element of the (countable) set of all finite subsets of Ulam--Harris
    labels $\mathcal{V}$. Recall that we construct the measure
    $\vartheta^j_{t-s}$ by attaching i.i.d.\ weights
    $(W^{j,v}_{t-s})_{v \in \mathcal{N}^j_{s,t}}$ to the individuals at
    time $s$, each distributed as $W^j_{t-s}$. By our second assumption,
    these weights converge in distribution to $(W^v_{t-s})_{v \in
                \mathcal{N}_{s,t}}$ jointly with the other variables in
    \eqref{eq:convergenceCSBP-1}.

    By Skorohod's representation theorem, let us assume that these random
    variables actually converge almost surely. For $j$ large enough,
    $\mathcal{N}^j_{s,t} = \mathcal{N}_{s,t}$ and by definition of
    $d^j_{s,t}$ and $\vartheta^j_{t-s}$,
    \[
        \forall v, w \in \mathcal{N}_{s,t},\quad
        \lim_{j \to \infty} d^j_{s,t}(v, w) = d_{s,t}(v, w),\quad
        \lim_{j \to \infty} \vartheta^j_{s,t}( \{ v \} ) =
        \vartheta_{s,t}(\{ v \}).
    \]
    It is not hard to see that these two limits entail that
    $(\mathcal{N}^j_{s,t}, d^j_{s,t}, \vartheta^j_{s,t})_{j \ge 1}$
    converges to $(\mathcal{N}_{s,t}, d_{s,t}, \vartheta_{s,t})$ in the
    Gromov-weak topology. One can for instance rely again Gromov's
    $\underline{\Box}_1$ metric, see \cite[Definition~3.2]{janson2020gromov},
    with the trivial relation $R = \{ (v, v) \in \mathcal{N}^j_{s,t}
        \times \mathcal{N}_{s,t} \}$ and a coupling of $\vartheta^j_{s,t}$
    and $\vartheta_{s,t}$ that puts maximal weight on the diagonal.
\end{proof}

\begin{proof}[Proof of Proposition~\ref{prop:equivalenceBranching}]
    The direction (i) $\implies$ (ii) was already proved for $z=1$ in
    \cite{labbe2014genealogy}, Proposition~5.6. The argument relies on
    the fact that CSBPs can be represented as time-changed Lévy
    processes, and that this so-called Lamperti transformation is a
    continuous operation by a result of \cite{Caballero2009}. The exact
    same argument shows the result for any $z > 0$.

    We prove the converse implication and suppose that (ii) holds. Then
    by assumption
    \[
        u_{t,j}(\theta) = - \frac{1}{z} \log \E[e^{-\theta \Xi_{t,j}(z)}]
        \to -\frac{1}{z} \log \E[e^{-\theta \Xi_t(z)}] \eqqcolon
        u_t(\theta), \qquad \text{as $j \to \infty$}
    \]
    and
    \[
        \partial_\theta u_{t,j}(\theta)
        = \frac{\E[\Xi_{t,j}(z) e^{-\theta \Xi_{t,j}(z)}]}%
        {z\E[e^{-\theta \Xi_{t,j}(z)}]}
        \to \frac{\E[\Xi_t(z) e^{-\theta \Xi_t(z)}]}%
        {z\E[{e^{-\theta \Xi_t(z)}}]}
        = \partial_\theta u_t(\theta), \qquad \text{as $j \to \infty$.}
    \]
    Moreover, \eqref{eq:ODECSBP} has a corresponding forward equation
    that reads
    \[
        \forall \theta, t \ge 0,\quad
        \partial_t u_{t,j}(\theta) = -\psi_j(\theta) \partial_\theta
        u_{t,j}(\theta).
    \]
    see \cite{li2011measure}, (3.6) for instance. We deduce that
    \[
        \forall \theta \ge 0,\quad \psi_j(\theta)
        = \frac{\theta - u_{t,j}(\theta)}%
        {\int_0^t \partial_\theta u_{s,j}(\theta) \diff s}.
    \]
    The dominated convergence theorem entails that $\psi_j \to \psi$
    pointwise as $j \to \infty$.

    Let us now prove the second part of the statement. 
    By Dini's theorem, $(\psi_j)_j$ and $(\psi'_j)_j$ converge
    to $\psi$ and $\psi'$ uniformly on compact subsets of $(0, \infty)$.
    Therefore, if $L_{j,\tau}(\theta)$ (resp.\ $L_{\tau}(\theta)$)
    denotes the Laplace transform at time $\tau$ obtained from $\psi_j$
    (resp.\ $\psi$) in \eqref{eq:CSBPReducedBis}, it follows that
    \[
        \forall \theta \in [0, 1],\: \forall \tau > 0,\quad
        \lim_{j \to \infty} L_{j,\tau}(\theta)
        = L_\tau(\theta).
    \]
    This checks the first point of Lemma~\ref{prop:convergenceCSBP}.

    For the second point of that lemma, recall the notation $Z_{s,t}$
    for the number of particles at time $s < t$ and recall that
    \[
        \widetilde{W}_{s,t} \coloneqq e^{-\int_0^s m_{t-x} \diff x} Z_{s,t},\quad
        s \in [0, t)
    \]
    is a martingale, the limit of which we denote by $\widetilde{W}_t$.
    (In the notation of Section~\ref{sec:reducedProcess}, $\widetilde{W}_t =
    W^\varnothing_{0,t}$).
    We start by showing that $\widetilde{W}_t \sim \bar{u}_t e^{at} \bar{\Xi}_t$,
    where $\bar{\Xi}_t$ is defined in \eqref{eq:laplaceEntrance}.
    Theorem~2.7.1 of \cite{LeGall2002} shows that
    \[
        \E[e^{-\theta Z_{s,t}}] = 1 -
        \frac{u_t((1-e^{-\theta})\bar{u}_{t-s})}{\bar{u}_t}.
    \]
    Recall from \eqref{eq:compensationReduced} that we have
    \[
        \bar{u}_{t-s} e^{-\int_0^s m_{t-x} \diff x} = \bar{u}_t e^{as}.
    \]
    Therefore,
    \begin{align*}
        \E[e^{-\theta \widetilde{W}_t}]
         & = \lim_{s \to t} \E[e^{-\theta \widetilde{W}_{s,t}}]
        = 1 - \lim_{s \to t} \frac{u_t((1-e^{-\theta e^{-\int_0^s
                m_{t-x} \diff x}})\bar{u}_{t-s})}{\bar{u}_t}
        \\
         & = 1 - \frac{u_t(\theta \bar{u}_t e^{at})}{\bar{u}_t}
        = \E[ e^{- \theta \bar{u}_t e^{at} \bar{\Xi}_t}],
    \end{align*}
    hence the claim. Now, looking at the explicit Laplace transform
    in \eqref{eq:laplaceEntrance}, we see that the convergence of
    $(\bar{u}_{t,j})_j$ and $(u_{t,j}(\theta))_j$ to $\bar{u}_t$ and
    $u_t(\theta)$ as $j \to \infty$ directly entails that
    $(\bar{\Xi}_{t,j})_j$ converges in distribution to $\bar{\Xi}_t$ as
    $j \to \infty$. Moreover, since $\widetilde{W}_{t,j} \sim \bar{u}_{t,j} e^{a_j t}
        \bar{\Xi}_{t,j}$,
    \[
        \E\big[ \bar{\Xi}_{t,j} \big] = \frac{e^{-a_j t}}{\bar{u}_{t,j}}
        \longrightarrow
        \frac{e^{-at}}{\bar{u}_{t}} = \E\big[ \bar{\Xi}_{t} \big],
        \qquad \text{as $j \to \infty$}
    \]
    by assumption. Therefore, $(\widetilde{W}_{t,j})_j$ converges to
    $\widetilde{W}_t$ in
    distribution, and so does the corresponding mm-spaces by
    Lemma~\ref{prop:convergenceCSBP}.
\end{proof}

\section{Moment recursion for branching diffusions}
\label{sect:moments-recursion}

Throughout this section, we consider the (dyadic) branching diffusion on an
arbitrary domain $\Omega$ -- either bounded or unbounded -- with
local branching rate $r$ and killed at the boundary, as defined in
Section~\ref{sec:branchingDiffusion}. Our sole assumption is that there
exist $w \in \R$ and a harmonic function $h$ for $\mathcal{L} + r + w$ in
the sense of \eqref{eq:defHarmonic}, where $\mathcal{L}$ is the generator
of the spatial motion. Note that by Proposition~\ref{prop:crit-finite},
this holds for any branching diffusion on a bounded domain. It also
holds for any critical branching diffusion in the sense of
Definition~\ref{def:criticality}. This extension will be needed to deal
with the BBM seen from the tip in Section~\ref{sect:BB M-from-tip}.

Given this harmonic function, we can define a spine process $(\zeta)_{t \ge 0}$ 
as in \eqref{eq:def_spine_g} and some corresponding planar moments
$\mathbf{M}^{k,t}_x$ according to \eqref{eq:biasedMomentBBM}. We let
$\hat{\M}^{k,t}_x$ be the rescaled moments as in
\eqref{eq:scaled_moments}. In analogy with \eqref{eq:product-form1}, we
say that a function $G\colon \mathbb{U}^*_{k}\to \R$ is of the
\emph{product form} if
\begin{equation} \label{eq:productFunctional}
    \forall U^* \in \mathbb{U}^*_k,\quad
    G(U^*) = \indic_{\{ c(U) = c \}} f(\tau(U)) \prod_{p=1}^{\abs{c}} F_p( U^*_p )
\end{equation}
for a fixed composition $c$ of $k$, and some continuous bounded functions
$f \colon \R_+ \to \R$ and $F_p \colon \mathbb{U}^*_{c_p} \to \R$, $p \le
\abs{c}$. The main result of this section is a recursive relation for
moments which mirrors the recursive relations for planar moments of a
$\psi$-mm space in Definition~\ref{def:planarMoments}.

\begin{proposition}[Moments of branching diffusions]
    \label{cor:many-to-few}
    Denote by $\bar{\zeta}_{t} \coloneqq \zeta_{tN}$ the accelerated spine
    process. Under the assumptions of this section,
    \begin{enumerate}
        \item For $k=1$, for every continuous bounded function $F$
              \[
                  \hat \M_{x}^{1,t}[F] = e^{-Nw t} \E_x[  F(\bar \zeta_{t}) ].
              \]

        \item Let $G$ be of the product form \eqref{eq:productFunctional}
              with the support of $f$ is included in $(\epsilon/N,
                  \infty)$, for some $\epsilon > 0$. Then,
              \begin{multline} \label{eq:branching:rn0-1}
                  \hat{\mathbf{M}}_x^{k,t}[G\circ S^{\frac{\vep}{N}}]
                  = \frac{1}{N^{\gamma(\abs{c}-\alpha)}}
                  \int_{\frac{\eps}{N}}^{t} f(u) e^{-Nw(t-u)} \\
                  \times \E_x\left(
                  r(\bar{\zeta}_{t-u}) h(\bar{\zeta}_{t-u})
                  \sum_{n=1}^{\abs{c}-1} \mathbf{M}_{\bar{\zeta}_{t-u}}^{n,\eps}\left[
                      \prod_{i=1}^n \hat{\mathbf{M}}^{c_i,u-\frac{\eps}{N}}_{X_i}[F_i]
                      \right]
                  \mathbf{M}_{\bar{\zeta}_{t-u}}^{\abs{c}-n,\eps}\left[
                  \prod_{i=1}^{\abs{c}-n} \hat{\mathbf{M}}^{c_{i+n},u-\frac{\eps}{N}}_{X_i}[F_{i+n}]
                  \right]
                  \right) \diff u.
              \end{multline}
    \end{enumerate}
\end{proposition}

The proof is based on a first recursive relation that was established in
\cite{schertzer2023spectral}.

\begin{proposition}[\cite{schertzer2023spectral}, Proposition~9]
    \label{def:planar-moments} Under the assumptions of this section,
    \begin{itemize}
        \item[(i)] Assume $k=1$. Then for every continuous and bounded function $F\colon \Omega \to \R$
              \[
                  \M_{x}^{1,t}[F] = e^{-wt} \E_x[  F(\zeta_{t}) ].
              \]
        \item[(ii)] Let $k\geq 2$.  Assume that $n\in[k-1]$ and  consider a product functional of the form
            \[
                  \forall U^*\in \mathbb{U}_k^*, \quad 
                  \bar G(U^*) = \indic_{\{c(U) = (n, k-n)\}}  
                  f(\tau(U)) \bar F_1(U^*_1) \bar F_2(U^*_2).
            \]
              Then
              \begin{equation}\label{eq:recursion-f}
                  \M_{x}^{k,t}[\bar G] = \int_0^t e^{-w (t-u)} f(u)
                  \E_x\left[ r(\zeta_{t-u}) h(\zeta_{t-u})
                      \M_{\zeta_{t-u}}^{n,u}[\bar F_1]
                      \M_{\zeta_{t-u}}^{k-n,u}[\bar F_2]
                      \right]
                  \diff u.
              \end{equation}
    \end{itemize}
\end{proposition}

The proof of Proposition~\ref{cor:many-to-few} requires to generalize
Definition~\ref{def:composition}. Let $s>0$. Informally, cutting the tree
encoded by $U \in \mathbb{U}_k$ at distance $s>0$ from the leaves splits
$U$ into several subtrees which leads to the following definitions. See
Figure~\ref{fig:notationTree} for an illustration at $s=\tau$. Formally,
fix a marked tree $U^* \in \mathbb{U}^*_k$ and $s > 0$.
\begin{itemize}
    \item Let $\pi^s(U)$ be the partition of $\{1, \dots, k\}$ such
          that $i \sim j$ iff $U_{ij} < s$.  Each block of $\pi^s(U)$ is
          made of consecutive integers and the blocks can be ordered
          increasingly. We let $\pi^s_p(U)$ be the $p$-th block.
    \item Let $\abs{c^s(U)}$ be the number of blocks of $\pi^s(U)$
          and let $c^s(U) = (c_p^s(U))_p$ be the composition of $k$
          given by $c^s_p(U) = \abs{\pi^s_p(U)}$, $p \le \abs{c^s(U)}$.
    \item For each $p \le \abs{c^s(U)}$, let $U^{*,s}_p \equiv
              U^{*,s}_p(U^*)$ be the restriction of $U^*$ to the $p$-th block
          $\pi^s_p(U)$.
\end{itemize}
Note that according to Definition~\ref{def:composition}, we have $c(U)
    = c^{\tau(U)}(U)$ and $U^*_p = U_p^{*,\tau(U)}$. We now
show that Proposition~\ref{def:planar-moments} entails the following
lemma.

\begin{lemma}
    \label{lem:recursion-moments}
    Let $k \geq 1$ and $0 < s < t$. Let $c$ be a composition of $k$, and
    $H^s$ a function of the following form
    \[
        \forall U^*\in \mathbb{U}_k^*, \ \  H^s(U^*) \ \coloneqq \
        \indic_{\{c^s(U)=c\}}
        \prod_{i=1}^{\abs{c}} F_{i}(U^{*,s}_i),
    \]
    where $F_i$ are continuous and bounded functions and $U_i^s \equiv
        U_i^s(U^*)$ is as in Definition \ref{def:composition}. Under the
    assumptions of this section,
    \[
        \M_{x}^{k,t}[H^s] \ = \ \M_{x}^{\abs{c},t-s}\bigg(
        \prod_{i=1}^{|c|} \M_{X_i}^{c_i,s}[F_i]
        \bigg).
    \]
\end{lemma}

\begin{proof}
    We start by showing the case $|c|=1$ for any $k$. In this
    case, the indicator function $\indic_{\{c^s(U)=c\}}$ in  the
    functional $H^{s}$ is non zero iff $\tau(U) < s$. Therefore,
    \begin{align*}
    \M_{x}^{k,t}[H^s] 
    &= \frac{1}{h(x)} \E_x\bigg(
        \sum_{u_1<\cdots<u_k \in{\cal N}_t} 
        \indic_{\{ \abs{c^s(d_t(\mathbf{u}))} = 1 \}}
        F\left(d_{t}({\bf u}), X_{\bf u}(t)\right) 
        \prod_{i=1}^k  h(X_{u_i}(t))
    \bigg) \\
    &=  \frac{1}{h(x)} \E_{x}\bigg( \sum_{v\in {\cal N}_{t-s}} 
        \sum_{\substack{u_1<\cdots<u_k \in{\cal N}_t \\ \forall i: v\preceq u_i }} 
        F\left(d_{t}({\bf u}), X_{\bf u}(t)\right) 
        \prod_{i=1}^k  h(X_{u_i}(t))   
    \bigg) \\
    &= \frac{1}{h(x)} \E_{x}\bigg( \sum_{v\in{\cal N}_{t-s}} 
        \E_{X_v(t-s)}\bigg(\sum_{u_1<\cdots<u_k \in {\cal N}_{s}} 
        F\left(d_{s}({\bf u}, X_{\bf u}(s))\right) 
        \prod_{i=1}^k h(X_{u_i}(s))
    \bigg) \\
    &= \frac{1}{h(x)} \E_{x}\bigg(
        \sum_{u\in{\cal N}_{t-s}} h(X_v(t-s)) 
        \M^{k,s}_{X_v(t-s)}[F]
    \bigg) \\
    &= \M_{x}^{1,t-s}(\M^{k,s}_{\cdot}[F]),
    \end{align*}
    where we used the  the Markov property in the third equality and the
    fact that the functional $H^{s}$ is non zero iff $\tau(U)<s$ in the
    second inequality . Let us now show our result by induction. The
    previous result shows the case $k=1$. 
    Recall that cutting the tree encoded by $U^* \in \mathbb{U}^*_k$ at
    distance $s$ from the leaves splits $U^*$ into the subtrees
    $U_i^{*,s}\equiv U^{*,s}_i(U^*)$ and that we can think of
    the $i$-th element of the composition $c^{s}(U)$, as the number of
    the leaves in $U_i^{*,s}$. In particular for a given $U^*$,
    $(c^{s}(U); s\geq0)$ is nested in the sense that
    \[
        \forall s'\geq s, \ \ c^{s'}(U) \subseteq c^{s}(U)
    \]
    where the inclusion means that $c^{s'}(U)$ is coarser than $c^s(U)$.
    (For instance, the trivial composition $(k)$ is coarser than $(n, k-n)$
    for any $n \in [k-1]$.)

    Fix some composition $c$ of $k$ with at least two elements (the case $|c|=1$ has already been covered), some
    continuous bounded functionals $(F_i)_{i \le \abs{c}}$ as in the
    statement and let
    \begin{equation} \label{def:calc}
        \forall n \in [|c|-1], \ \ \   \ {\cal C}_{n} \ = \ \sum_{i=1}^{n} c_i.
    \end{equation}
    If $U^*$ is a binary marked tree with $c^s(U) = c$, the nestedness
    property implies the existence of $n\in[|c|-1]$ such that
    \[
        c(U) = (\mathcal{C}_{n}, k-\mathcal{C}_{n})
    \]
    and $c^{s}(U_1) = (c_1, \dots, c_n)$ and
     $c^{s}(U_2) = (c_{n+1},\dots, c_{|c|})$.
    Now, define for every $s \leq t$, 
    \begin{gather*}
        \forall U^*\in \mathbb{U}^*_{{\cal C}_n}, \ \ \
        \bar{H}^s_{l,n}(U^*) \ \coloneqq \ \indic_{\{c^s(U) = (c_1,\dots,c_{n})\}}
        \prod_{i=1}^n F_i(U^{*,s}_i), \\
        \forall U^*\in \mathbb{U}^*_{k-{\cal C}_n}, \ \
        \bar{H}^{s}_{r,n}(U^*) \ \coloneqq \
        \indic_{\{c^s(U) = (c_{n+1},\dots,c_{|c|})\}}
        \prod_{i=1}^{|c|-n} F_{i+n}(U^{*,s}_{i}).
    \end{gather*}
    The previous observation yields for $s<\tau(U)$,
    \begin{equation} \label{eq:H-decomposition}
        H^{s}(U^*) \ = \ \sum_{n=1}^{|c|-1} \bar H^s_{l,n}(U^*_1) \bar H^s_{r,n}(U^*_2)
    \end{equation}
    where we used that fact that if $c(U) = ({\cal C}_n, k-{\cal C}_n)$
    \[
        \forall i\leq n, \ \ U_i^{*,s}(U^*) = (U^*_1)^{*,s}_i,
        \ \ \forall i> n, \ \ U_i^{*,s}(U^*) = (U^*_2)^{*,s}_{i-n}.
    \]
    From \eqref{eq:recursion-f}, and since $H^{s}(U^*)=0$ for $\tau \leq
        s$, we obtain
    \begin{equation*}
        \M_{x}^{k,t}[H^s] \ = \ \int_{u\in [s,t]}
        e^{-w(t-u)} \E_{x}\left[ r(\zeta_{t-u}) h(\zeta_{t-u})
        \sum_{n=1}^{|c|-1} \M^{{\cal C}_{n},u}_{\zeta_{t-u}}[\bar H^s_{l,n}]
        \M^{k-{\cal C}_{n},u}_{\zeta_{t-u}}[\bar H^s_{r,n}]
        \right] \diff u.
    \end{equation*}
    By induction,
    \begin{gather*}
        \M^{{\cal C}_{n},u}_{y}[\bar H^s_{l,n}] \ = \
        \M_{y}^{n,u-s}\left( \prod_{i=1}^{n} \M_{X_i}^{c_i,s}[  F_i ] \right) \\
        \M^{k-{\cal C}_{n},u}_{y}[\bar H^s_{r,n}] \ = \
        \M_{y}^{|c|-n,u-s}\left( \prod_{i=1}^{|c|-n} \M_{X_i}^{c_{i+n},s}[F_{i+n}]
        \right)
    \end{gather*}
    so that
    \begin{multline*}
        \M_{x}^{k,t}[H^s] \ = \ \int_{[s,t]} e^{-w(t-u)} \E_{x}\bigg[
            r(\zeta_{t-u}) h(\zeta_{t-u})
            \sum_{n=1}^{|c|-1} \M_{\zeta_{t-u}}^{|c|,u-s}\left(
            \prod_{i=1}^{n} \M_{X_i}^{c_i,s}[  F_i ] \right)
            \\
            \times \M_{\zeta_{t-u}}^{|c|-n,u-s}\left( \prod_{i=1}^{|c|-n}
            \M_{X_i}^{c_{i+n},s}[  F_{i+n}]\right)
            \bigg]\diff u.
    \end{multline*}
    The result follows from the change of variable $\bar{u} = u-s$ and
    applying \eqref{eq:recursion-f} again on
    \[
        \bar{F}_1 = \prod_{i=1}^{n} \M_{X_i}^{c_i,s}[F_i], \ \ \
        \bar{F}_2 = \prod_{i=1}^{\abs{c}-n} \M_{X_i}^{c_{i+n},s}[F_{i+n}].
        \qedhere
    \]
\end{proof}

\begin{proof}[Proof of Proposition \ref{cor:many-to-few}]
    Let $\bar{\eps}>0$ and write $G^{\bar \eps} = G\circ S^{\bar \eps}$.
    For any $U^* \in \mathbb{U}^*_k$ such that $\tau \equiv \tau(U) >
    \bar{\eps}$, by definition of $S^{\bar{\epsilon}}$ in
    \eqref{eq:mergedMap}
    \begin{equation} \label{eq:Feps2}
        G^{\bar \epsilon}(U^*) = f(\tau(U))
        \indic_{\{ c^{\tau-\bar \epsilon}(U) = c\}}
        \prod_{i=1}^{|c|} F_i(U_{i}^{*,\tau-\bar \epsilon}).
    \end{equation}
    By reasoning as in \eqref{eq:H-decomposition}, we can decompose
    $G^{\bar \epsilon}$ as the sum of product functions on the two
    subtrees attached at the first branching point, if $\tau>\bar \eps$
    \begin{equation} \label{eq:cond-tau}
        G^{\bar \epsilon}(U^*) \ = \ f(\tau)
        \sum_{n=1}^{|c|-1} H^{\tau-\bar{\epsilon}}_{l,n}(U^*_1)
        H_{r,n}^{\tau-\bar{\epsilon}}(U^*_2),
    \end{equation}
    where for any $n\in[|c|-1]$,
    \begin{gather*}
        \forall U^*\in \mathbb{U}^*_{{\cal C}_n}, \ \
        H^u_{l,n}(U^*) \ = \ \indic_{\{c^u(U) = (c_1,\cdots,c_n)\}}
        \prod_{i=1}^n F_i(U_{i}^{*,u}), \\
        \forall U^*\in \mathbb{U}^*_{k-{\cal C}_{n}}, \ \ \
        H^u_{r,n}(U^*) \ = \
        \indic_{\{c^u(U) = (c_{n+1},\cdots, c_{|c|})\}}
        \prod_{i=1}^{|c|-n} F_{i+n}(U_i^{*,u}),
    \end{gather*}
    and $\mathcal{C}_n$ is defined as in \eqref{def:calc}. If the
    support of $f$ is included in $(\bar{\epsilon}, \infty)$,
    \eqref{eq:cond-tau} also holds even when $\tau < \bar \eps$, with
    both sides being equal to $0$.

    The next step is to extend the recursion equation
    \eqref{eq:recursion-f} to a larger class of product functionals
    \eqref{eq:productFunctional}. Assume that $k\geq 2$. For $i=1,2$,
    consider two continuous and bounded functions
    \begin{eqnarray*}
        \bar F_i: & \mathbb{U}^* \times \R_+ & \rightarrow  \R_+ \\ &
        (U^*,s) & \rightarrow  F_i^s(U^*).
    \end{eqnarray*}
    Finally, for $n\in[k]$, let $\bar G$ be of the form
    \begin{equation}
        \label{eq:prodctFunctionalE}
        \forall U^*\in \mathbb{U}_k^*, \ \  \ \ 		\bar G(U^*) = \indic_{\{c(U^*) = (n, k-n)\}}  f(\tau(U^*)) \bar F^{\tau(U^*)}_1(U^*_1) \bar F^{\tau(U^*)}_2(U^*_2).
    \end{equation}
    where $U^*_1$ and $U^*_2$ are as in Definition \ref{def:composition}. In particular, we note that if $\bar F_i$ has no dependence in the variable $s$, then $\bar G$ coincides with the product form \eqref{eq:productFunctional}. Then, we have the following generalization of the recursive equation
    (\refeq{eq:recursion-f})
    \begin{equation}\label{eq:recursion-f0}
        \M_{x}^{k,t}[\bar G] = \int_0^t e^{-w (t-u)} f(u)
        \E_x\left[ r(\zeta_{t-u}) h(\zeta_{t-u})
            \M_{\zeta_{t-u}}^{n,u}[\bar F_1^u]
            \M_{\zeta_{t-u}}^{k-n,u}[\bar F_2^u]
            \right]
        \diff u.
    \end{equation}
    In order to prove \eqref{eq:recursion-f0}, one can directly check that
    the proof of Lemma~\ref{lem:recursion-moments} (see
    \cite{schertzer2023spectral}, Proposition~9) can be directly adapted to
    functionals of the form \eqref{eq:prodctFunctionalE}. Alternatively, one
    can prove \eqref{eq:recursion-f0} as a consequence
    Lemma~\ref{lem:recursion-moments} by a monotone-class argument.

    We are now ready to complete the proof of Proposition~\ref{cor:many-to-few}.
    From \eqref{eq:cond-tau} and \eqref{eq:recursion-f0}, we get
    \[
        \M^{k,t}[G^{\bar \epsilon}] \ =\
        \int_{\bar{\epsilon}}^t f(u) e^{-w(t-u)} \sum_{n=1}^{|c|-1}
        \E_{x}\left( r(\zeta_{t-u}) h(\zeta_{t-u})
        \M^{{\cal C}_n,u}_{\zeta_{t-u}}[H^{u-\bar{\epsilon}}_{l,n}]
        \M^{k-{\cal C}_n,u}_{\zeta_{t-u}}[ H^{u-\bar{\epsilon}}_{r,n}]
        \right)\diff u.
    \]
    From Lemma~\ref{lem:recursion-moments}, this yields
    \begin{multline*}
        \mathbf{M}_x^{k,t}(G^{{\bar \epsilon}})
        =  \int_{\bar{\epsilon}}^t f(u) e^{-w(t-u)} \\
        \times \E_x\left(
        r({\zeta}_{t-u}) h({\zeta}_{t-u})
        \sum_{n=1}^{\abs{c}-1} \mathbf{M}_{{\zeta}_{t-u}}^{n,{\bar \epsilon}}\left[
        \prod_{i=1}^n {\mathbf{M}}^{c_i,u-{\bar \epsilon}}_{ X_i}[F_i]
        \right]
        \mathbf{M}_{\zeta_{t-u}}^{\abs{c}-n,{\bar \epsilon}}\left[
        \prod_{i=1}^{\abs{c}-n} {\mathbf{M}}^{c_{i+n},u-{\bar \epsilon}}_{X_i}[F_{i+n}]
        \right]
        \right) \diff u.
    \end{multline*}
    Proposition~\ref{cor:many-to-few} follows after scaling (see the
    rescaling of the moments in \eqref{eq:scaled_moments}, and
    $\bar{\zeta}_{t} = \zeta_{tN}$) and taking $\bar{\epsilon} = \epsilon/N$.
\end{proof}

\section{Proof of Theorem~\ref{thm:main-cv}}

We prove Theorem~\ref{thm:main-cv} for general branching diffusions on a
domain $\Omega_\infty$. Let us recall some notation first. This result
relies on a spatial cutoff of the process, for which particles are killed
upon exiting a spatial domain $\Omega_{AN^\gamma}$ defined in
\eqref{eq:truncatedDomain}, for some $A \ge 1$ and $N \ge 1$. Let us
remind the reader of our subscript convention. Quantities related to the
original diffusion (with no cutoff) are denoted with an $\infty$
subscript (e.g.\ $\mathcal{N}_{t,\infty}$, $d_{t, \infty}$) whereas
similar quantities constructed from the branching diffusion on
$\Omega_{AN^\gamma}$ are denoted with a $(N, A)$ subscript
(e.g.\ $\mathcal{N}_{t,(N,A)}$, $d_{t,(N,A)}$). We drop the subscript
$(N,A)$ most of the time and simply write $\mathcal{N}_t \equiv
\mathcal{N}_{t,(N,A)}$, $d_t \equiv d_{t,(N,A)}$, but may sometimes keep
it when needed.

Recall the notation $(\bar{\mathcal{N}}_t, \bar{d}_t, \bar{\nu}_t)$ for
the rescaled mmm-space, namely
\[
    \bar{\mathcal{N}}_t = \mathcal{N}_{Nt}, \qquad
    \bar{d}_t = \frac{d_{tN}}{N}, \qquad
    \bar{\nu}_t = \frac{\nu_{tN}}{N^\gamma}.
\]
We introduce another empirical measure by giving mass $h(x)$ to a
particle at $x$, namely
\[
    \hat{\nu}_{t,(N,A)} \equiv \hat{\nu}_t
    = \frac{1}{N^\gamma} \sum_{v \in \bar{\mathcal{N}}_{t}}
    h(X_v(tN)) \delta_{X_{v}(tN)}.
\]
We will refer to this measure as the biased sampling measure. We will
also use the notation
\[
    \bar{W}_t = \angle{\hat{\nu}_t, 1}.
\]

The proof of Theorem~\ref{thm:main-cv} is divided in three main steps.
\begin{enumerate}
    \item[(A)] For $A$ fixed, we let $N \to \infty$ and apply a method of
          moments to deduce the convergence of the mmm-spaces with biased
          sampling measure $(\bar{\mathcal{N}}_{t}, \bar{d}_{t},
              \hat{\nu}_{t})$.
    \item[(B)] Using the estimate on the mean number of particles, we
          remove the bias to obtain convergence of the mmm-spaces
          $(\bar{\mathcal{N}}_{t}, \bar{d}_{t}, \bar{\nu}_{t})$
          with natural sampling measures, as $N \to \infty$.
    \item[(C)] We use a perturbation result to compare the genealogy of the
          original branching diffusion to the limit as $N \to \infty$ then
          $A \to \infty$ of the process with a cutoff $(\bar{\mathcal{N}}_{t},
              \bar{d}_{t}, \bar{\nu}_{t})$. We identify the latter limit to an
          $\alpha$-stable genealogy using the fact that its moments grow
          polynomially with $A$.
\end{enumerate}
Steps (A) and (B) only involve rather standard but cumbersome limit
theorem arguments. Conversely, step (C) requires some new ideas to
identify the limiting $\alpha$-stable branching process from moment
computations.

\subsection{Step (A)}

In this section we prove the following proposition.

\begin{proposition}[Convergence of the biased mmm-space] \label{lem:convBiasedMMM}
    Fix $A \ge 1$ and $x \in \Omega_{\infty}$. Suppose that the
    assumptions of Theorem~\ref{thm:main-cv} hold and let $\psi_A$ be as
    in \eqref{eq:psiA} with $\bar{u}_{t,A}$ as in
    \eqref{eq:laplaceExponent}. Then
    \[
        \lim_{N \to \infty}  \frac{N^\gamma}{h(x)}
        \mathscr{L}_{x}(
        \bar{\mathcal{N}}_{t}, \bar{d}_{t}, \hat{\nu}_{t}
        )
        = \bar{u}_{t,A} \mathscr{L}(\mathcal{U}_{t,A}, d_{t,A},
        \vartheta_{t,A} \otimes \Pi_\infty),
    \]
    vaguely in the Gromov-weak topology, with $(\mathcal{U}_{t,A}, d_{t,A},
        \vartheta_{t,A})$ the $\psi_A$-mm space.
\end{proposition}

This result will follow from the method of moments. It first requires
a technical lemma to compare factorial biased moments with non-factorial
ones.

\begin{lemma} \label{lem:factorial2moment}
    Fix $A > 1$ and $x \in \Omega_\infty$. Let $k \ge 2$ and $c = (c_1,
    \dots, c_p)$ be a composition of $k$ such that $c \ne (1, \dots, 1)$.
    Under the assumptions of Theorem~\ref{thm:main-cv},
    \[
        \frac{1}{h(x)} \E_{x}\Bigg[
            \sum_{\substack{v_1, \dots, v_p \in \bar{\mathcal{N}}_{t}\\ \text{$(v_i)$ distinct}}}
            \prod_{i=1}^p h(X_{v_i}(tN))^{c_i}
            \Bigg]
        = o_N\big(N^{\gamma(k-1)}\big).
    \]
\end{lemma}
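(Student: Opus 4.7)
The plan is to apply the many-to-few formula to reduce the factorial moment to a $p$-spine integral, then exploit the fact that the limiting mark distribution is a bulk-concentrated probability measure to extract the vanishing factor.

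Writing $h := h_{(N,A)}$ for brevity and choosing the mark test function $G(\mathbf{x}) := \prod_{i=1}^p h(x_i)^{c_i - 1}$, we have $\prod_i h(X_{v_i})^{c_i} = G(\mathbf{X}) \prod_i h(X_{v_i})$, so by Proposition~\ref{prop:M2F} applied to the product-form test function $G$ (extended from the binary form by iterating the recursion down to singleton leaves carrying a single factor $\phi_i(x_i) := h(x_i)^{c_i-1}$),
\[
    \frac{1}{h(x)}\E_x\bigg[\sum_{\substack{v_1,\dots,v_p \in \mathcal{N}_{tN,(N,A)}\\ \text{distinct}}} \prod_{i=1}^p h(X_{v_i}(tN))^{c_i}\bigg] = \sum_P \M^{p,tN}_{x,(N,A)}[G \circ P].
\]
Each permuted term has the same structural form (product of marks raised to permuted exponents summing to $k-p$), so it suffices to bound one. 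Since $G$ depends on marks only, the scaling identity \eqref{eq:scaled_moments} gives $\M^{p,tN}_{x,(N,A)}[G] = N^{\gamma(p-1)} \hat{\M}^{p,t}_{x,(N,A)}[G]$, and the hypothesis $c \ne (1,\dots,1)$ forces $p \le k-1$ and $k-p \ge 1$. The lemma therefore reduces to showing $\hat{\M}^{p,t}_{x,(N,A)}[G] = o(N^{\gamma(k-p)})$ as $N \to \infty$.

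To achieve this, I would decompose the mark integration using an exhausting sequence of compact sets $K_M \uparrow \R^d$ chosen so that $\Pi_\infty(\R^d \setminus K_M) \to 0$, which is possible because $\Pi_\infty := h_\infty \tilde h_\infty$ is a probability measure by the Perron--Frobenius normalization. On the bulk event $\{\forall j: X_j \in K_M\}$, the uniform convergence $h_{(N,A)} \to h_\infty$ on $K_M$ from assumption~(1) of Theorem~\ref{thm:main-cv} gives $G \le C_M$ for $N$ large, while assumption~(2) provides $\hat{\M}^{p,t}_{x,(N,A)}[1] = O(1)$; hence the bulk contribution is $O(1) = o(N^{\gamma(k-p)})$. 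On the tail event, the uniform bound $h \le \bar c A N^\gamma$ on $\Omega_{(N,A)}$ from assumption~(1) yields $G \le (\bar c A N^\gamma)^{k-p}$, and by a union bound it remains to control $\hat{\M}^{p,t}_{x,(N,A)}[\mathbf{1}_{\{X_1 \notin K_M\}}]$. Sandwiching the indicator between continuous bounded functions $\phi_M$ with $\phi_M \ge \mathbf{1}_{K_M^c}$ and $\int \phi_M\, d\Pi_\infty \to 0$, and applying assumption~(2) to the continuous bounded functional $\phi_M(X_1)$ on $\mathbb{U}_p^*$, gives $\limsup_N \hat{\M}^{p,t}_{x,(N,A)}[\mathbf{1}_{\{X_1 \notin K_M\}}] \le |\hat{\mathcal{M}}^{p,t}_{(A)}| \int \phi_M\, d\Pi_\infty =: \delta_M$, with $\delta_M \to 0$ as $M \to \infty$. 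The tail contribution to the scaled integral is thus bounded by $C \delta_M\, N^{\gamma(k-p)}$ for $N$ large. Letting $M \to \infty$ after $N \to \infty$ yields the claim.

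The main obstacle — and the reason the pointwise bound $G \le (\bar c A N^\gamma)^{k-p}$ alone cannot suffice — is that this bound is exactly of order $N^{\gamma(k-p)}$, matching the rate we must beat, so no improvement can come from controlling $G$ directly. The requisite $o(1)$ must come entirely from the vanishing probability, under the limiting moment measure, that any one of the $p$ marks sits in a large tail region of $\Omega_\infty$. This is precisely what the joint moment convergence of assumption~(2) of Theorem~\ref{thm:main-cv} delivers, once one observes that the limiting mark distribution $\Pi_\infty = h_\infty \tilde h_\infty$ is a genuine probability measure concentrated in the bulk of the domain.
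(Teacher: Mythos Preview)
Your proof is correct and rests on the same three inputs as the paper's: the many-to-few formula to pass to a $p$-spine integral, the weak convergence of $\hat{\M}^{p,t}_{x,(N,A)}$ from assumption~(2), and the uniform bound $h_{(N,A)} \le C A N^\gamma$ from assumption~(1). The paper's argument is more streamlined: rather than keeping $G = \prod_i h^{c_i-1}$ and splitting into bulk and tail to beat the growth $N^{\gamma(k-p)}$, it normalises the integrand upfront to $\prod_i \big(h_{(N,A)}(X_i)/N^\gamma\big)^{c_i-1}$, which is uniformly bounded (by assumption~(1)) and converges to $0$ since $h_{(N,A)} \to h_\infty$ on compacts and at least one $c_i - 1 > 0$. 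This reduces the claim to $\hat{\M}^{p,t}_{x,(N,A)}\big[\prod_i (h_{(N,A)}/N^\gamma)^{c_i-1}\big] \to 0$, for which one invokes dominated convergence against the weakly convergent measures directly. Your bulk/tail decomposition is exactly what makes that informal ``dominated convergence'' rigorous, so the two arguments are really the same; the paper just hides the split inside a one-line appeal.
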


\begin{proof}
    By ordering the terms in the sum in increasing order and using the
    definition of the measure $\hat{\M}^{p,t}_{x}$ in
    \eqref{eq:scaled_moments},
    \begin{align}
        \frac{1}{h(x)} \E_{x}\Bigg[
           \sum_{\substack{v_1, \dots, v_p \in \bar{\mathcal{N}}_{t}    \\
                    \text{$(v_i)$ distinct}}}
            \prod_{i=1}^p h(X_{v_i}(tN))^{c_i}
        \Bigg] 
         & = k!\, N^{\gamma(p-1)} \hat{\M}^{p,t}_{x}\Big[ \prod_{i=1}^p
        h(X_i)^{c_i-1} \Big] \nonumber                                  \\
         & = k!\, N^{\gamma(k-1)} \hat{\M}^{p,t}_{x}\Big[ \prod_{i=1}^p
            \Big(\frac{h(X_i)}{N^\gamma}\Big)^{c_i-1} \Big].
        \label{eq:factorial2moment1}
    \end{align}
    The result follows by noting that:
    \begin{itemize}
        \item since $h \to h_\infty$ uniformly on compacts as $N
                  \to \infty$ by the second point of Theorem~\ref{thm:main-cv},
              $h(x) / N^\gamma \to 0$ uniformly on compacts as $N \to \infty$;
        \item since $c \ne (1,\dots,1)$, $c_i > 1$ for some $i \le p$
              and so the integrand in \eqref{eq:factorial2moment1}
              vanishes uniformly on compacts;
        \item this integrand is bounded by the second point of
              Theorem~\ref{thm:main-cv}: for all $y \in
                  \Omega_{AN^\gamma}$, $h(y) \le C \sup_{\Omega_{AN^\gamma}}
                  h_{\infty} \le CAN^\gamma$;
        \item the sequence of measures $(\hat{\M}^{p,t}_{x})_{N \ge 1}$ is tight
              since it  converges weakly as $N \to \infty$ by
              \eqref{eq:convergenceMoment}.
              \qedhere
    \end{itemize}
\end{proof}

\begin{proof}[Proof of Proposition~\ref{lem:convBiasedMMM}]
    Let $F \colon \mathbb{U}^*_k \to \R$ be continuous bounded and let
    $\Phi$ be the corresponding polynomial as in \eqref{eq:defPolynomial}.
    Then, re-ordering the individuals as
    in~\eqref{eq:biasedMomentBBM}-\eqref{eq:scaled_moments}
    \begin{align*}
        N^\gamma
        \E_{x}\big[
            \Phi\big(
            \bar{\mathcal{N}}_{t}, \bar{d}_{t}, \hat{\nu}_{t}
            \big) \big]
         & = \frac{1}{N^{\gamma(k-1)}}
        \E_{x}\Big[
            \sum_{v_1, \dots, v_k \in \bar{\mathcal{N}}_{t}}
            F\big(\tfrac{d_{tN}(\mathbf{v})}{N}, X_{\mathbf{v}}(tN) \big)
            \prod_{i=1}^k h(X_{v_i}(tN))
        \Big]                                                       \\
         & = \frac{1}{N^{\gamma(k-1)}}
        \E_{x}\Big[
        \sum_{\substack{v_1, \dots, v_k \in \bar{\mathcal{N}}_{t}   \\
                    \text{$(v_i)$ distinct}}}
            F\big(\tfrac{d_{tN}(\mathbf{v})}{N}, X_{\mathbf{v}}(tN) \big)
            \prod_{i=1}^k h(X_{v_i}(tN))
        \Big] + R                                                   \\
         & = \sum_{P} h(x) \hat{\mathbf{M}}^{k,t}_x[F \circ P] + R,
    \end{align*}
    where the remainder $R$ is obtained by removing the terms $(v_1,
        \dots, v_k)$ in the sum such that $v_i = v_j$ for some $i \ne j$.
    Letting $i \sim j$ iff $v_i = v_j$ defines a partition of
    $\{1, \dots, k\}$. Summing over all $k$-tuples of elements of
    $\bar{\mathcal{N}}_t$ that are not all distinct amounts to summing
    over all possible such partitions $\pi = (\pi_i)_i$ with $\abs{\pi}
        < k$ blocks, and then over all ways to assign distinct elements of
    $\bar{\mathcal{N}}_t$ to the blocks. This leads to the bound
    \[
        \abs{R} \le \sum_{\pi \ne \{ \{1\}, \dots, \{k\} \}}
        \frac{\kappa_\pi}{N^{\gamma(k-1)}} \norm{F}_\infty
        \E_{x}\Big[
        \sum_{\substack{v_1, \dots, v_{\abs{\pi}} \in \bar{\mathcal{N}}_{t}\\
                \text{$(v_i)$ distinct}}}
        \prod_{i=1}^{\abs{\pi}} h(X_{v_i}(tN))^{\abs{\pi_i}}
        \Big],
    \]
    where the sum is taken over all partitions of $\{1,\dots,k\}$ and
    $\kappa_\pi$ is a combinatorial factor. By Lemma~\ref{lem:factorial2moment},
    $\lim_{N \to \infty} R = 0$ and hence by \eqref{eq:convergenceMoment}
    \begin{equation} \label{eq:convBiasedMMM1}
        \lim_{N \to \infty} \frac{N^\gamma}{h(x)}
        \E_{x}\big[ \Phi\big(
            \bar{\mathcal{N}}_{t}, \bar{d}_{t}, \hat{\nu}_{t}
            \big) \big]
        = \sum_{P} \big(\hat{\mathcal{M}}^{k,t}_{A} \otimes
        (h_\infty\tilde{h}_\infty)^{\otimes k}\big)\big[F \circ P\big].
    \end{equation}
    By Proposition~\ref{prop:momentMmSpace}, the right-hand side is the
    $k$-th moment of the $\psi_A$ mm-space, multiplied by
    $\bar{u}_{t,A}$. Moreover, the measure $\Lambda_A$ in
    \eqref{eq:rescaled_lambda} has a finite exponential moment moment in
    the sense of \eqref{eq:finiteExpMoment}, since $\Lambda$ does so. It
    is not hard to see that this entails that the corresponding
    $\psi_A$-CSBP (and thus the $\psi_A$-mm space) also has a finite
    exponential moment. Therefore, a version of the method of moments for
    vague convergence in the Gromov-weak topology, \cite[Theorem~3.1]{foutel24vague}, 
    allows us to deduce that
    \[
        \lim_{N \to \infty} \frac{N^\gamma}{h(x)}
        \mathscr{L}_{x}\big( \bar{\mathcal{N}}_{t}, \bar{d}_{t}, \hat{\nu}_{t} \big)
        =
        \bar{u}_{t,A} \mathscr{L}\big( \mathcal{U}_{t,A}, d_{t,A},
        \vartheta_{t,A} \otimes \Pi_\infty \big)
    \]
    vaguely in the Gromov-weak topology.
\end{proof}

\subsection{Step (B)}

We now remove the bias and prove convergence of the branching diffusion
endowed with its natural empirical measure. For a measure $\nu$ on $X
\times \Omega_\infty$ and a map $f \colon \Omega_\infty \to \R_+$, we
will use the notation $f\cdot\nu$ for the measure whose Radon--Nikodym
derivative w.r.t.\ $\nu$ is $(x,y) \in X \times \Omega_\infty \mapsto
f(y) \in \R_+$.

\begin{proposition}[Convergence of the unbiased mmm-space] \label{lem:removeBias}
    Fix $A \ge 1$ and $x \in \Omega_{\infty}$. Under the
    assumptions of Theorem~\ref{thm:main-cv},
    \[
        \lim_{N \to \infty} \frac{N^\gamma}{h(x)}
        \mathscr{L}_{x}(
        \bar{\mathcal{N}}_{t}, \bar{d}_{t}, \bar{\nu}_{t}
        )
        = \bar{u}_{t,A} \mathscr{L}(\mathcal{U}_{t,A}, d_{t,A},
        \vartheta_{t,A} \otimes \tilde{h}_\infty),
    \]
    vaguely for the Gromov-weak topology, where $(\mathcal{U}_{t,A}, d_{t,A},
        \vartheta_{t,A})$ is the $\psi_A$-mm space corresponding to the
    same branching mechanism as in Propositin~\ref{lem:convBiasedMMM}. Moreover,
    if $\bar{\Xi}_{t,A} \sim \vartheta_{t,A}(\mathcal{U}_{t,A})$
    \begin{equation} \label{eq:limitSmallMass}
        \forall c > 0,\quad
        \lim_{N \to \infty} \frac{N^\gamma}{h(x)}
        \E_{x}[ \bar{Z}_{t} \indic_{\{ \bar{Z}_{t} \le c \}} ]
        = \E[ \bar{\Xi}_{t,A} \indic_{\{ \bar{\Xi}_{t,A} \le c \}} ].
    \end{equation}
\end{proposition}

\begin{proof}
    The two sampling measures are related through
    \[
        \bar{\nu}_{t} = \frac{1}{h} \cdot \hat{\nu}_{t}.
    \]
    Since $1/h$ is not bounded, we cannot directly deduce the convergence
    of $\bar{\nu}_{t}$ from that of $\hat{\nu}_{t}$. We
    obtain our result following the same strategy as in
    \cite[Section~7.2]{schertzer2023spectral}: we first remove particles
    at locations $y$ such that $h_\infty(y) < \delta$ for some small
    $\delta$ and then let $\delta \to 0$.

    \medskip
    \noindent
    \emph{Step 1. Convergence for $\delta$ fixed.}
    Fix $\delta > 0$ and let $H$ and $H_\infty$ be the maps
    \[
        H \colon (X,d,\nu) \mapsto \big(
        X, d, \tfrac{\indic(h(y) > \delta)}{h(y)} \cdot \nu
        \big),
        \qquad
        H_\infty \colon (X,d,\nu) \mapsto \big(
        X, d, \tfrac{\indic(h_\infty(y) > \delta)}{h_\infty(y)} \cdot \nu
        \big).
    \]
    We want to apply a version of the continuous mapping theorem for
    vague convergence. It is not hard to verify that, if $(X_N, d_N,
        \nu_N)_{N \ge 1}$ are mmm-spaces converging to a limit $(X, d, \nu)$,
    then, since $h \to h_\infty$ uniformly on compacts,
    \[
        H(X_N, d_N, \nu_N) \to H_\infty(X, d, \nu)
    \]
    (provided that $\nu(\{ y \in \Omega_\infty : h_\infty(y) = \delta \})
        = 0$). This follows directly from the characterisation of the
    Gromov-weak convergence in terms of isometric embeddings in a common metric
    space (for instance \cite[Lemma~5.8]{Greven2009}).
    Therefore, \cite[Theorem~3.3]{foutel24vague} allows us to deduce
    from Proposition~\ref{lem:convBiasedMMM} that
    \begin{equation} \label{eq:removeBias-1}
        \lim_{N \to \infty} \frac{N^\gamma}{h(x)}
        \mathscr{L}_{x} \big(\bar{\mathcal{N}}_{t}, \bar{d}_{t},
        \indic_{\{ h(y) > \delta\}} \cdot \bar{\nu}_{t} \big)
        = \bar{u}_{t,A} \mathscr{L}(\mathcal{U}_{t,A}, d_{t,A},
        \vartheta_{t,A} \otimes
        \indic_{\{ h_\infty(y) > \delta\}} \tilde{h}_\infty).
    \end{equation}

    \medskip
    \noindent
    \emph{Step 2. Letting $\delta \to 0$.} We will use the perturbation
    result for metric measure spaces in \cite[Theorem~5.1]{foutel24vague}.
    Denote the rescaled number of particles of the branching diffusion in
    $\Omega_{AN^\gamma}$ whose mark belongs to $\{ y \in \Omega_\infty :
        h(y) \le \delta\}$ as
    \[
        \bar{Z}^{\delta}_{t,(N,A)} \equiv
        \bar{Z}^{\delta}_{t} = \frac{1}{N^\gamma}
        \sum_{v \in \bar{\mathcal{N}}_{t}}
        \indic_{\{ h(X_v(tN)) \le \delta \}}.
    \]
    In view of \cite[Theorem~5.1]{foutel24vague}, we need to prove that
    \begin{equation} \label{eq:uniformSmallDelta}
        \forall \epsilon > 0,\quad
        \adjustlimits\lim_{\delta \to 0}\limsup_{N \to \infty}
        \frac{N^\gamma}{h(x)}
        \P_{x}\big(
        \bar{Z}^\delta_{t} \ge \epsilon
        \big) = 0.
    \end{equation}
    By \eqref{eq:removeBias-1} and a vague convergence version of
    the continuous mapping theorem, \cite[Corollary~3.4]{foutel24vague},
    \[
        \lim_{N \to \infty} \frac{N^\gamma}{h(x)}
        \mathscr{L}_{x}( \bar{Z}_{t} - \bar{Z}^\delta_{t} )
        = \bar{u}_{t,A} \mathscr{L}\Big( \bar{\Xi}_{t,A}
        \int_{\Omega_\infty} \indic_{\{h_{\infty}(y) > \delta\}} \tilde{h}_\infty(y) \diff y
        \Big),
    \]
    vaguely as measures on $(0, \infty]$, where $\bar{\Xi}_{t,A}$ is
    distributed as $\vartheta_{t,A}(\mathcal{U}_{t,A})$. As a
    consequence, an adaptation of Fatou's lemma for vague convergence
    (Lemma~\ref{lem:fatouVague}) and assumption \eqref{eq:mainCV2}
    show that
    \begin{align*}
        \limsup_{N \to \infty}
        \frac{N^\gamma}{h(x)} \E_{x}[ \bar{Z}^\delta_{t} ]
         & = \lim_{N \to \infty} \frac{N^\gamma}{h(x)}
        \E_{x}[ \bar{Z}_{t}]
        - \liminf_{N \to\infty} \frac{N^\gamma}{h(x)}
        \E_{x}[ \bar{Z}_{t}-\bar{Z}^\delta_{t} ]       \\
         & \le \bar{u}_{t,A} \E[\bar{\Xi}_{t,A}]
        - \bar{u}_{t,A} \E[\bar{\Xi}_{t,A}]
        \int_{\Omega_\infty} \indic_{\{h_{\infty}(y) > \delta\}}
        \tilde{h}_\infty(y) \diff y.
    \end{align*}
    Markov's inequality proves \eqref{eq:uniformSmallDelta}, which
    shows that point~(ii) of \cite[Theorem~5.1]{foutel24vague} is
    satisfied. Therefore,
    \[
        \lim_{N \to \infty} \frac{N^\gamma}{h(x)}
        \mathscr{L}_x\big( \bar{\mathcal{N}}_t, \bar{d}_t, \bar{\nu}_t \big)
        = \lim_{\delta \to 0}
        \bar{u}_{t,A} \mathscr{L}\big(
        \mathcal{U}_{t,A}, d_{t,A}, \vartheta_{t,A} \otimes
        \indic_{\{ h_\infty(y) > \delta\}} \tilde{h}_\infty
        \big)
        = \bar{u}_{t,A} \mathscr{L}\big(
        \mathcal{U}_{t,A}, d_{t,A}, \vartheta_{t,A} \otimes \tilde{h}_\infty
        \big)
    \]
    vaguely in the Gromov-weak topology.

    It remains to prove the second part of the statement. Fix $c > 0$.
    By the first part of the proof $N^\gamma \mathscr{L}_x(\bar{Z}_t
        \indic_{\{ \bar{Z}_t \le c \}}) / h(x) \to \bar{u}_{t,A}
        \mathscr{L}(\bar{\Xi}_{t,A} \indic_{\{\bar{\Xi}_{t,A} \ge c \}})$
    vaguely as $N \to \infty$. Fatou's lemma for vague
    convergence (Lemma~\ref{lem:fatouVague}) shows that
    \begin{gather}
        \begin{split} \label{eq:removeBias1}
            \liminf_{N \to \infty}
            \frac{N^\gamma}{h(x)}
            \E_{x}\big[\indic_{\{ \bar{Z}_{t} > c\}} \bar{Z}_{t}\big]
            \ge \bar{u}_{t,A} \E\big[ \indic_{\{ \bar{\Xi}_{t,A} > c\}}
            \bar{\Xi}_{t,A} \big], \\
            \liminf_{N \to \infty}
            \frac{N^\gamma}{h(x)}
            \E_{x}\big[\indic_{\{ \bar{Z}_{t} \le c\}} \bar{Z}_{t}\big]
            \ge \bar{u}_{t,A} \E\big[ \indic_{\{ \bar{\Xi}_{t,A} \le c\}}
                \bar{\Xi}_{t,A} \big].
        \end{split}
    \end{gather}
    By our assumption \eqref{eq:mainCV2} we also know that
    \[
        \lim_{N \to \infty} \frac{N^\gamma}{h(x)}
        \E_{x}[\bar{Z}_{t}]
        = \bar{u}_{t,A} \E[ \bar{\Xi}_{t,A} ].
    \]
    The only way that this limit occurs together with \eqref{eq:removeBias1}
    is that both limits inferior in \eqref{eq:removeBias1} are limits, and both
    inequalities are equalities.
\end{proof}

\subsection{Step (C)}

Completing the last step requires us to show two claims:
\begin{enumerate}
    \item The limit as $A \to \infty$ of the $\psi_A$-mm space is the
          genealogy of the $\alpha$-stable branching process.
    \item The limits as $N \to \infty$ and as $A \to \infty$ of the
          branching diffusion on $\Omega_{AN^\gamma}$ can be interchanged.
\end{enumerate}
For the first claim, we will rely on the following result.

\begin{proposition} \label{lem:invarianceProperty}
    Let $(\psi_{A})_{A \ge 1}$ be a collection of branching mechanisms
    converging pointwise to a limit $\psi$ as $A \to \infty$. Suppose
    that $\psi_A$ is as in \eqref{eq:psiA}, with $a_A, b_A \to 0$
    as $A \to \infty$ and $\Lambda_A$ defined in \eqref{eq:rescaled_lambda}
    from some measure $\Lambda$.
    Then $\psi(\theta) = \psi(1) \theta^\alpha$, $\theta \ge 0$.
\end{proposition}

\begin{proof}
    By definition of $\Lambda_A$,
    \begin{align*}
        \forall \theta \ge 0,\quad \psi_A(\theta)
         & = a_A \theta + \tfrac{b_A}{2} \theta^2 +
        \int_0^\infty \big( e^{-\theta y} - 1 + \theta y \big) \Lambda_A(\diff y) \\
         & = a_A \theta + \tfrac{b_A}{2} \theta^2
        + A^{-\alpha} \int_0^\infty \big( e^{-A \theta y} - 1 + A \theta y \big)
        \Lambda(\diff y)                                                          \\
         & = a_A \theta + \tfrac{b_A}{2} \theta^2
        + \theta^{\alpha} \int_0^\infty \big( e^{-y} - 1 + y \big)
        \Lambda_{A\theta}(\diff y).
    \end{align*}
    This shows that
    \[
        \psi(\theta) = \lim_{A \to \infty} \psi_A(\theta) = \lim_{A \to \infty}
        \theta^\alpha \psi_{A\theta}(1) = \theta^\alpha \psi(1).
        \qedhere
    \]
\end{proof}

Interchanging the two limits will require the following uniform estimate
on the number of particles being killed by adding the spatial cutoff.

\begin{lemma}\label{lem:particlesAbsorbed}
    Let $R_t \equiv R_{t,(N,A)}$ be the number of particles of the
    branching diffusion on $\Omega_\infty$ that exit the subdomain
    $\Omega_{AN^\gamma}$ before time $t$.
    Then for every $x \in \Omega_{AN^\gamma}$
    \[
        \adjustlimits\lim_{A\to\infty}\limsup_{N\to\infty}
        \frac{N^\gamma}{h(x)} \P_{x}( R_{tN} > 0 )
        = 0.
    \]
\end{lemma}

\begin{proof}
    Let us consider the process in $\Omega_\infty$. Since $h_\infty$ is
    harmonic, the process
    \[
        W_{t,\infty} \coloneqq \sum_{u\in{\cal N}_{t,\infty}} h_\infty(X_{v}(t))
    \]
    is an additive martingale.
    Recall from \eqref{eq:truncatedDomain} that
    \[
        \Omega_{AN^\gamma} = \{ y \in \Omega_\infty : h_\infty(y) < A N^\gamma \}.
    \]
    Thus,
    \begin{align*}
        \P_x( R_{tN} > 0 )
         & = \P_x\big(
        \exists s \in [0,tN], v \in \bar{\mathcal{N}}_{t} :
        X_{v}(s) \notin \Omega_{AN^\gamma}
        \big)              \\
         & \le \P_{x}\big(
        \sup_{s \in [0,tN]} W_{s,\infty} \ge A N^\gamma
        \big)
    \end{align*}
    By Doob's maximal inequality, it follows that
    \begin{equation*}
        \P_{x}( R_{tN} > 0 )
        \le \frac{h_{\infty}(x)}{A N^\gamma},
    \end{equation*}
    hence the result recalling that $h(x) \to h_\infty(x)$ as $N \to
        \infty$ by assumption.
\end{proof}

We are now in position to prove Theorem~\ref{thm:main-cv}. Recall that
$\bar{Z}_{t, \infty} = Z_{tN, \infty} / N^\gamma$ is the rescaled number
of particles at time $tN$ in the process on the full domain $\Omega_\infty$.

\begin{proof}[Proof of Theorem~\ref{thm:main-cv}]
    The proof is divided into two steps:
    \begin{enumerate}
      \item We use the fact that the mmm-space with cutoff $(\bar{\mathcal{N}}_{t},
        \bar{d}_{t}, \bar{\nu}_{t})$ converges for every $A\geq 1$ (see Step (B))
        together with a  perturbation theorem from~\cite{foutel24vague} to prove 
        that the mmm-space without cutoff $(\bar{\mathcal{N}}_{t,\infty}, 
        \bar{d}_{t,\infty}, \bar{\nu}_{t,\infty})$ also converges to a limit.
      \item It then remains to prove that this limiting mmm-space is a
      marked $\psi$-mm space, with $\psi(\theta) = C \theta^\alpha$ for some $C > 0$,
      that is to prove that the limits $N\to\infty$ and $A\to\infty$ commute. 
      To do so,
      \begin{enumerate}
        \item[(i)] we first prove that the total mass of the limiting mmm-space 
        is distributed as a $\psi$-CSBP;
        \item[(ii)] we then lift this result to metric spaces using 
        Proposition~\ref{prop:equivalenceBranching}.
      \end{enumerate}
    \end{enumerate}
    In this proof, will use repeatedly that, $h(x) / h_\infty(x) \to 1$ as $N \to
        \infty$ for any $A \ge 1$ without further mention of this fact. 
        
        \textit{Step 1.} On
    the event $\{ R_{tN} = 0 \}$ that no particle exit
    $\Omega_{AN^\gamma}$ before time $tN$, the branching diffusion on
    $\Omega_\infty$ and that on $\Omega_{AN^\gamma}$ are perfectly
    coupled until time $tN$. Therefore by Lemma~\ref{lem:particlesAbsorbed}
    \[
        \adjustlimits\lim_{A \to \infty}\limsup_{N \to \infty}
        \frac{N^\gamma}{h_{\infty}(x)}
        \P_{x}\big( \bar{Z}_{t,\infty} - \bar{Z}_{t} \ge \epsilon \big)
        \le
        \adjustlimits\lim_{A \to \infty} \limsup_{N \to \infty}
        \frac{N^\gamma}{h_{\infty}(x)}
        \P_{x}\big( R_{tN} > 0 \big) = 0.
    \]
    Combining this uniform bound with step (B) (Proposition~\ref{lem:removeBias})
    a perturbation theorem for vague convergence \cite[Theorem~5.1]{foutel24vague}
    shows that there exists a (possibly infinite) measure $M_t$ on
    the space of mmm-spaces such that
    \begin{equation} \label{eq:inversionAN}
        \lim_{N \to \infty} \frac{N^\gamma}{h_{\infty}(x)}
        \mathscr{L}_{x}(\bar{\mathcal{N}}_{t,\infty},
        \bar{d}_{t,\infty}, \bar{\nu}_{t,\infty})
        = \lim_{A \to \infty} \bar{u}_{t,A}
        \mathscr{L}(
        \mathcal{U}_{t,A}, d_{t,A}, \vartheta_{t,A} \otimes \tilde{h}_\infty
        )
        = M_t,
    \end{equation}
    vaguely in the marked Gromov-weak topology. 
    
    \textit{Step 2.(i).} All what remains to be
    shown is that $M_t$ is a multiple of the law of a $\psi$-mm space, with
    $\psi(\theta) = C \theta^\alpha$ for some $C > 0$. In this step, we 
    show that $M_t$ is finite and that the total mass of the mmm-space
    under $M_t$ is distributed as an $\alpha$-stable CSBP.
    We first prove that
    \begin{equation} \label{eq:L1convFinal}
        \forall c > 0,\quad
        \lim_{N \to \infty} \frac{N^\gamma}{h_{\infty}(x)} \E_{x}\big[
            \bar{Z}_{t,\infty} \indic_{\{ \bar{Z}_{t,\infty} \le c \}} \big]
        = \int_{\{\abs{X} \le c\}} \abs{X} \,\diff M_t(X,d,\nu),
    \end{equation}
    where we used the notation $\abs{X} = \nu(X \times E)$. Note that
    (reintroducing the subscript $(N,A)$ for clarity), for $A' > A$ and
    $c > 0$:
    \begin{equation} \label{eq:L1coupling}
        \frac{N^\gamma}{h_{(N,A)}(x)}
        \E_{x}\big[ \bar{Z}_{t,(N,A')}\indic_{\{ \bar{Z}_{t,(N,A')} \le c\}}
            - \bar{Z}_{t,(N,A)}\indic_{\{ \bar{Z}_{t,(N,A)} \le c\}}
            \big]
        \le \frac{cN^\gamma}{h_{(N,A)}(x)} \P_{x}( R_{tN, (N,A)} > 0).
    \end{equation}
    By Lemma \ref{lem:particlesAbsorbed}, the right-hand side of the
    above inequality vanishes uniformly in $N$ as $A \to \infty$.
    Therefore, using a diagonal argument and \eqref{eq:limitSmallMass},
    proving \eqref{eq:L1convFinal} boils down to showing that
    \begin{equation} \label{eq:proofMainCV-1}
        \lim_{A \to \infty} \bar{u}_{t,A}
        \E\big[ \indic_{\{\bar{\Xi}_{t,A} \le c\}}
            \bar{\Xi}_{t,A} \big]
        = \int_{\{\abs{X} \le c\}} \abs{X} \,\diff M_t(X,d,\nu),
    \end{equation}
    where $\bar{\Xi}_{t,A}$ is distributed as $\vartheta_{t,A}(\mathcal{U}_{t,A})$.
    For any $\delta >0 $, letting $N \to \infty$ in \eqref{eq:L1coupling},
    \[
        \limsup_{A' \to \infty} \bar{u}_{t,A'}
        \E\big[ \indic_{\{\bar{\Xi}_{t,A'} \le \delta\}}
            \bar{\Xi}_{t,A'} \big]
        \le
        \bar{u}_{t,A} \E\big[ \indic_{\{\bar{\Xi}_{t,A} \le \delta\}}
            \bar{\Xi}_{t,A} \big]
        + \limsup_{N \to \infty}
        \frac{cN^\gamma}{h_{(N,A)}(x)} \P_{x}( R_{tN, (N,A)} > 0).
    \]
    Letting $\delta \to 0$ then $A \to \infty$ with
    Lemma~\ref{lem:particlesAbsorbed} in this inequality,
    \[
        \adjustlimits\lim_{\delta \to 0}\limsup_{A' \to \infty}
        \bar{u}_{t,A'} \E\big[ \indic_{\{\bar{\Xi}_{t,A'} \le \delta\}}
            \bar{\Xi}_{t,A'} \big]
        = 0.
    \]
    Moreover, the vague convergence \eqref{eq:inversionAN} entails that
    \[
        \forall \delta > 0,\quad \lim_{A \to \infty} \bar{u}_{t,A}
        \E\big[ \indic_{\{\delta < \bar{\Xi}_{t,A} \le c\}}
            \bar{\Xi}_{t,A} \big]
        = \int_{\{\delta < \abs{X} \le c\}} \abs{X} \,\diff M_t(X,d,\nu).
    \]
    These two limits prove that \eqref{eq:proofMainCV-1} (and therefore
    \eqref{eq:L1convFinal}) holds.

    Now, we prove that $(\psi_A)_A$ converges as $A \to \infty$ to an
    $\alpha$-stable branching mechanism. Let $(\Xi_{t,A}(z))_{z \ge 0}$
    be a compound Poisson process with jump rate $\bar{u}_{t,A}$ and
    jumps distributed as $\bar{\Xi}_{t,A}$. (Note that here $z$, not
    $t$, is used as the time variable for the Poisson process.) It is
    well-known that $\Xi_{t,A}(z)$ is distributed as a $\psi_A$-CSBP at
    time $t$ started from $z$ \cite{bertoin2000Bolthausen}. By standard
    convergence results for subordinators
    \cite[Theorem~15.14]{kallenberg2002foundations},
    \eqref{eq:inversionAN} and \eqref{eq:L1convFinal} show that
    $(\Xi_{t,A}(z))_{z \ge 0}$ has a limit $(\Xi_t(z))_{z \ge 0}$ in law
    as $A \to \infty$. Moreover, $(\Xi(z)_t)_{z \ge 0}$ is a subordinator
    whose jump measure is the pushforward of $M_t$ by the map $(X,d,\nu)
        \mapsto \abs{X}$. Proposition~\ref{prop:equivalenceBranching} shows
    that there exists a branching mechanism $\psi$ such that
    \[
        \forall \theta > 0,\quad \lim_{A \to \infty} \psi_A(\theta) =
        \psi(\theta),
    \]
    and by Proposition~\ref{lem:invarianceProperty}, $\psi(\theta) =
        \psi(1) \theta^\alpha$. Since $\alpha \in (1,2)$, the $\alpha$-stable
    CSBP satisfies Grey's condition \eqref{eq:greyCondition} and has a
    first finite, and we can define the corresponding limit Laplace
    exponent $\bar{u}_t$ and $\psi$-mm space $(\mathcal{U}_t, d_t,
        \vartheta_t)$. In particular, this shows that $M_t$ is a finite
    measure with mass $\bar{u}_t$, and that the population size $\abs{X}$
    under $M_t / \bar{u}_t$ is distributed as $\bar{\Xi}_t$, the entrance
    law of the $\psi$-CSBP.

    \textit{Step 2.(ii).} We now apply the second part of
    Proposition~\ref{prop:equivalenceBranching}.     Let us start by showing
    that $\lim_{A \to \infty} \bar{u}_{t,A} = \bar{u}_t$. By killing the
    particles hitting the boundary $\Omega_{AN^\gamma}$, there is a
    natural coupling between the branching process on the full domain
    $\Omega_\infty$ and that on $\Omega_{AN^\gamma}$ such that
    \[
        \forall t \ge 0,\quad  \bar{Z}_t \le \bar{Z}_{t,\infty}.
    \]
    Therefore, by the vague convergence as $N \to \infty$ obtained in the
    first part of the proof,
    \begin{align*}
        \forall z > 0,\quad
        \bar{u}_{t,A} \P( \bar{\Xi}_{t,A} \ge z )
         & = \lim_{N \to \infty} \frac{N^\gamma}{h_\infty(x)}
        \P_x( \bar{Z}_{t} \ge z )                               \\
         & \le \lim_{N \to \infty} \frac{N^\gamma}{h_\infty(x)}
        \P_{x}( \bar{Z}_{t,\infty} \ge z )                      \\
         & = \bar{u}_t \P( \bar{\Xi}_t \ge z ) .
    \end{align*}
    Letting $z \to 0$, this shows that $\bar{u}_{t,A} \le \bar{u}_t$.
    On the other hand, using the vague convergence as $A \to \infty$,
    \[
        \forall z > 0,\quad \bar{u}_t \P(\bar{\Xi}_t \ge z)
        = \lim_{A \to \infty} \bar{u}_{t,A} \P(\bar{\Xi}_{t,A} \ge z)
        \le \liminf_{A \to \infty} \bar{u}_{t,A}.
    \]
    Letting $z \to 0$, we deduce that $\lim_{A \to \infty} \bar{u}_{t,A}
        = \bar{u}_t$.

    To end the proof, note that the Poisson structure of CSBPs entails
    that $\bar{u}_{t,A} \E[\bar{\Xi}_{t,A}] = \E[\Xi_{t,A}(1)] = e^{-a_A
                t}$. Thus, since $a_A \to 0$ as $A \to \infty$ by assumption,
    $\E[\bar{\Xi}_{t,A}] \to 1 = \E[\bar{\Xi}_t]$ as $A \to \infty$.
    By Proposition~\ref{prop:equivalenceBranching}, we can conclude that
    \[
        \lim_{A \to \infty} \bar{u}_{t,A}
        \mathscr{L}(\mathcal{U}_{t,A}, d_{t,A}, \vartheta_{t,A})
        =
        \bar{u}_t \mathscr{L}(\mathcal{U}_t, d_t, \vartheta_t)
    \]
    weakly as measures on mm-spaces. In particular, the latter convergence
    also holds vaguely, and the limit $M_t$ obtained in
    \eqref{eq:inversionAN} is
    \[
        M_t = \bar{u}_t \mathscr{L}(\mathcal{U}_t, d_t, \vartheta_t \otimes \tilde{h}_\infty),
    \]
    which ends the proof.
\end{proof}

\section{Branching Brownian motion in an interval}

In this section, we study the BBM with a cutoff introduced in
Section~\ref{sec:heuristics_BBM}. Recall that the cutoff $L$ is chosen as
in~\eqref{def:LA0}, namely
\[
    L\equiv L_{(N,A)} \coloneqq \frac{1}{2\beta}\log N+\frac{1}{\mu-\beta}\log A.
\]
\subsection{Spectral theory}
\label{sec:spectral_theory}

We first recall known facts about the spectral decomposition of the
Sturm--Liouville problem \eqref{SLP} below. For a detailed exposition of
the subject, we refer the reader to \cite[Section 4.1]{schertzer2023spectral}.
A solution of the Sturm--Liouville problem
\begin{equation}\tag{SLP}
    \frac{1}{2} u''(x)+W(x)u(x)=\rho u(x), \qquad  x\in(0,L), \quad
\end{equation}
with boundary conditions
\begin{equation}
    \tag{BC}
    u(0)=u(L)=0,
\end{equation}
is a function $u:[0,L]\to\mathbb{R}$ such that $u$ and $u'$ are
absolutely continuous on $[0,L]$ and satisfy \eqref{SLP} a.e. on $(0,L)$.
Since $W$ is continuous on $[0,L]$, the solutions are twice
differentiable on $[0,L]$ and \eqref{SLP} holds for all $x\in(0,L)$. A
complex number $\rho$ is an eigenvalue of \eqref{SLP} if \eqref{SLP}
has a solution $v$ that is not identically zero on $[0,L]$. The set of
eigenvalues is referred to as the spectrum. The spectrum of \eqref{SLP}
is infinite, countable and has no finite accumulation point. It is upper
bounded
and the maximal eigenvalue $\lambda_{(N,A)}\equiv\lambda$ is referred to as the principal
eigenvalue. Under \eqref{Hwp} and for $L$ large enough, this eigenvalue is
positive. We denote by $v_{(N,A)} \equiv v$ the associated eigenvector,
renormalised to satisfy $v(1) = 1$. It is known that the function $v$
does not change sign in $(0,L)$. Hence, we have $v>0$ on $(0,L)$ and we
note that
\begin{equation}\label{formula:v1}
     v(x)= \frac{\sinh\left(\sqrt{2\lambda}(L-x)\right)}%
    {\sinh\left(\sqrt{2\lambda}(L-1)\right)}, \quad x\in[1,L].
\end{equation}
As explained in the heuristics (see \eqref{eq:def_h}), the function
\[
    h(x)=c_{h} e^{\mu x} v(x),
\]
is the unique (up to a
multiplicative constant) harmonic function associated with the BBM with
drift $-\mu$, killed upon exiting $(0,L)$. The corresponding spine
process, defined as the Doob $h$-transform of the mean semigroup of the
BBM, solves the SDE
\[
    \diff \zeta_t=\frac{v'(\zeta_t)}{v(\zeta_t)}\,\diff t + \diff B_t,
\]
where $(B_t)_{t \ge 0}$ is a standard Brownian motion. Let $\sigma =
\inf\{t \geq 0 : \zeta_t \in[0,1]\}$. Using~\eqref{formula:v1}, we have
\begin{equation} \label{eq:SDE_drift}
    \diff \zeta_{t\wedge \sigma}
    = F\bigl(\sqrt{2\lambda},\, L-\zeta_{t\wedge \sigma}\bigr)\,\diff t
    + dB_{t\wedge \sigma}, \quad t>0,
    \qquad
    F(b,z)=-\frac{b}{\tanh(bz)}, \quad b,z>0.
\end{equation}
One can easily check that,
\begin{equation*}
    -1 < \partial_b F(b,z) < 0,
    \quad \text{and} \quad
    0 < \partial_z F(b,z) \leq \frac{1}{z^2},
\end{equation*}
so that, for all $b_1, b_2, z_1, z_2 > 0$,
\begin{equation}
    \label{eq:dev_F}
    \abs{F(b_1, z_1) - F(b_2, z_2)}
    \leq \abs{b_1 - b_2} + \abs*{\frac{1}{z_1}-\frac{1}{z_2}}.
\end{equation}

Recall that $q_t(x,y)$ refers to the transition density of the spine
process. It is well known that the spine process admits a stationary
distribution given by 
\begin{equation} \label{eq:defPi}
    \Pi(x)=c_hc_{\tilde h}v(x)^2=\tilde{h}(x)h(x),
\end{equation}
where $\tilde{h}$ is as in \eqref{eq:def_h}. Note that with this
renormalisation, $\Pi$ is not a probability distribution.

The generalized Green's function (or resolvent) of the spine process,
defined as
\[
    G_{\xi}(x,y)
    \coloneqq \int_0^\infty e^{-\xi t} q_t(x,y)\,\diff t, \qquad \xi>0,
\]
can be expressed in terms of the fundamental solutions of the ODE~\eqref{SLP}.
Let $g_\rho$  (resp.\ $d_\rho$)
denote a solution of~\eqref{SLP} satisfying $g_\rho(0) = 0$
(resp.\ $d_\rho(L)=0$). Define the Wronskian
\[
    \omega_\rho\equiv \omega_{\rho, (N,A)}
    \coloneqq d_\rho(1)g_\rho'(1)-d_\rho'(1)g_\rho(1).
\]
The functions \(g_\rho\) and \(d_\rho\) are unique up to multiplicative
constants. Without loss of generality, we fix the normalisation
\begin{equation} \label{eq:norm}
     d_\rho(x)=\frac{\sinh\bigl(\sqrt{2\rho}(L-x)\bigr)}{\sinh\bigl(\sqrt{2\rho}(L-1)\bigr)}, \; \quad x\in [1,L], \quad 
    \text{and}
    \qquad
    g_\rho'(0) = v'(0).
\end{equation}
It is a standard result (see e.g.~\cite[Proposition 4.11]{schertzer2023spectral})
that the Green's function admits the representation
\begin{equation} \label{def:green}
    G_\xi(x,y)=
    \begin{cases}
        (\omega_{\lambda_\infty+\xi})^{-1}\frac{v(y)}{v(x)}
        \,d_{\lambda_\infty+\xi}(x)\,g_{\lambda_\infty+\xi}(y),
         & 0\le y\le x\le L, \\[1ex]
        (\omega_{\lambda_\infty+\xi})^{-1}\frac{v(y)}{v(x)}
        \,g_{\lambda_\infty+\xi}(x)\,d_{\lambda_\infty+\xi}(y).
         & 0\le x\le y\le L.
    \end{cases}
\end{equation}
We begin this section by recalling results
from~\cite{schertzer2023spectral} for the harmonic function $h$, the
corresponding spine process, and its Green's function.

\begin{lemma}[Convergence of the principal 
    eigenelements~\cite{schertzer2023spectral}, Lemma~5] \label{lem:spectral}
    Under assumption~\eqref{Hwp}, the following statements hold.
    \begin{enumerate}[(i)]
        \item For every $A\geq 1$, the principal eigenvalue $\lambda$
            converges to a positive limit $\lambda_\infty$ as
            $N\to\infty$.
        \item There exists a constant $c_{w}>0$ such that, for every $A\geq 1$,
              \begin{equation} \label{eq:defW}
                w  = \lambda_\infty-\lambda
                  \sim \frac{c_{w}}{A^{\alpha-1}N},
                  \qquad \text{as } N\to\infty.
              \end{equation}
   
        \item The eigenfunction \(v\) converges pointwise, uniformly on
              compact sets and in $L^2(\R^+)$ to a limiting function
              \(v_{\infty}\) as \(N\to\infty\). Furthermore, there exist
              two constants $C,C'>1$ such that, for \(N\) large enough,
              for all $A\geq 1$, 
              \[
                  C'\,(1\wedge x\wedge (L-x))\,\mathrm{e}^{-\beta x}
                  \le v(x)
                  \le C\,(1\wedge x\wedge (L-x))\,\mathrm{e}^{-\beta x}.
              \]
    \end{enumerate}
\end{lemma}

\begin{proof}
    The result is a reformulation of \cite{schertzer2023spectral},
    Lemma~5 using that $e^{2\beta L} = A^{\frac{2\beta}{\mu-\beta}} N =
    A^{\alpha-1}N$.
\end{proof}

\begin{rem}
The third point of the lemma implies that the mass of $\Pi$ converges to
$1$ as $N\to\infty$.
\end{rem}

\begin{cor}
    \label{lem:eigenBound}
      For all $A\geq 1$, there exists a constant $C>0$ such that for
      all $N$ large enough,
      \begin{equation} \label{eq:boundIntegralPi}
          \E_{\Pi}[ h(\zeta_s) ] =\int_0^L \Pi(x) h(x) \diff x \le C  N^{\gamma-1}
          = C  N^{\gamma(2-\alpha)}.
      \end{equation}
      and
        \begin{equation}
          \label{eq:easy_bound1}
          \sup_{x \in (0,L)} h(x) \le C N^{\gamma},
          \quad
          \sup_{x \in (0,L)} h(x)\Pi(x)
          \le C N^{\gamma-1} = C N^{\gamma(2-\alpha)}.
      \end{equation}
\end{cor}

\begin{proof}
    The result follows by recalling that $\Pi(x)h(x) = C e^{\mu x}
    v^3(x) \le C' e^{(\mu-3\beta)x}$ by Lemma~\ref{lem:spectral} and
    noting that $L^{\mu - 3\beta} = (A^{\alpha-1} N)^{\frac{\mu-3\beta}{2\beta}} 
    = (A^{\alpha-1} N)^{\gamma - 1}$.
\end{proof}

\begin{proposition}[Heat kernel estimate \cite{schertzer2023spectral}, 
    Proposition~12] \label{prop:hk}
    There exists a constant $c > 0$ such that
    for all $A\geq 1$, all $t > c L$,
    \begin{equation*}
        \abs{q_t(x,y) - \Pi(y)} =o(1) \Pi(y),
    \end{equation*}
    where $o(1)$ denotes a quantity that goes to $0$ as $N\to\infty$, 
    that may depend on $A$ but not on $x,y\in(0,L)$.
\end{proposition}

\begin{lemma}[Upper bound on the Green's function
    \cite{schertzer2023spectral}, Lemma 9 and Lemma 10]
    \label{lem:boundGreen}
    Let $T>0$ and $A \geq 1$. Let $\xi\equiv \xi_{(N,A)}$ be such that 
    $\tfrac{1}{N}\leq \xi \ll \frac{1}{L}$.
    We have
    \begin{align}
        d_{\lambda_\infty+\xi}(x)
         & = O(1)\,(1\wedge (L-x))\,\mathrm{e}^{-\beta x},
        \label{est:phi2}
        \\
        g_{\lambda_\infty+\xi}(x)
         & = (1\wedge x)\Bigl(O(1)\,\mathrm{e}^{-\beta x}
        + O(\xi)\,\mathrm{e}^{\beta x}\Bigr),
        \label{est:psi2}
    \end{align}
    and the Wronskian is bounded below by
    \[
        \omega_{\lambda_\infty+\xi}
        \ge \xi\left(\int_0^L v(y)^2\,\diff y + O(\xi L)\right),
    \]
    where $O(\cdot)$ denotes a quantity bounded in absolute value by a constant 
    times the quantity inside the parentheses. This constant may depend on $T$ and $A$ 
    but not on $N$.
\end{lemma}

\subsection{Relaxation of the spine process}

This section studies the relaxation of the spine process $\zeta$ over a
time interval of length $aL$, $a > 0$. We prove two results: (i) regardless of
the starting point of the spine process, the locations of particles
produced after the first branch point are far from the right boundary
after a time $aL$ (Lemma~\ref{lem:boundSpineProcess2}); (ii) the
spine process started close to the right boundary $L$ does not reach
$x=1$ by time $aL$ with high probability if $a < 1/\beta$
(Lemma~\ref{lem:boundSpineProcess}).

\begin{lemma} \label{lem:boundSpineProcess}
    Fix $a < 1/\beta$.
    There exists $\Delta > 0$ such that, for all $A\geq 1$,
    \[
        \forall R > 0,\qquad
        \adjustlimits \limsup_{N \to \infty} \sup_{z \in (0, R)}
        \P_{L-z}\big( \inf_{s \in (0, aL)} \zeta_s \le 1 \big)
        \le N^{-\Delta}.
    \]
\end{lemma}

\begin{proof}
    Fix $R > 0$. Since the spine process has continuous paths, for all $z \in (0, R)$,
    \begin{equation}
        \label{eq:continuity}
        \P_{L-z}\big( \inf_{s \in (0,aL)} \zeta_s \le 1 \big)
        \le
        \P_{L-R}\big( \inf_{s \in (0,aL)} \zeta_s \le 1 \big).
    \end{equation}
    We want to compare $(\zeta_t)_{t \ge 0}$ to a Brownian
    motion $(X_t)_{t \ge 0}$ with an appropriate drift $c^*$. Fix some
    $0<z^*<R$ and let $c^* = F(\sqrt{2\lambda},z^*) < 0$. Then
    \begin{equation} \label{eq:comparisonSDE}
        \zeta_t - X_t
        = \int_0^t
        \Big( F(\sqrt{2\lambda}, L-\zeta_s) - c^* \Big) \diff s,
        \qquad
        X_t \coloneqq \zeta_0 + B_t + c^* t.
    \end{equation}
    Define the hitting time of $L-z^*$ by $X_t$ as
    \[
        \tau = \inf \{ t > 0 : X_t > L - z^*\}.
    \]
    We claim that $X_t \le \zeta_t$ for $t < \tau$. Indeed, it is clear
    from~\eqref{eq:comparisonSDE} (and the fact that $F$ increasing
    in its second variable) that $\zeta_t-X_t$ has positive derivative on
    the set of times such that $\zeta_t < L-z^*$ and hence it can only
    hit $0$ at a time $t$ such that $X_t = \zeta_t > L-z^*$. Therefore
    \begin{align*}
        \P_{L-R}\big( \inf_{s \in (0,aL)} \zeta_s \le 1 \big)
         & \le
        \P_{L-R}( \tau \le aL )
        + \P_{L-R}\big( \inf_{s \in (0,aL)} X_s \le 1 \big) \\
         & \le
        \P_{L-R}( \tau < \infty )
        + \P\big(
        \inf_{s \in (0,aL)} B_s \le 1 - aL c^* - L + R
        \big).
    \end{align*}
    The result follows by choosing $R$ and $z^*$ appropriately.
    First, note that $c^* = F(\sqrt{2\lambda}, z^*) \to
        -\sqrt{2\lambda}$ as $z^* \to \infty$. Further using that
    $\sqrt{2\lambda} \to \beta$ as $N \to \infty$ by
    Lemma~\ref{lem:spectral}~(i), we can choose $z^*$ large enough so
    that $\limsup_{N \to \infty} - c^* a < 1$. By the reflection
    principle and a standard Gaussian bound,
    \begin{align*}
        \P\big(
        \inf_{s \in (0, aL)} B_s \le 1 -aL c^* - L + R
        \big)
         & =
        \P\big(
        \abs{B_{aL}} \ge L + aL c^* - R - 1
        \big)                                                         \\
         & \le \exp\Big( - \frac{((1 + c^* a)L - R - 1)^2}{aL} \Big).
    \end{align*}
    Second, recall that the maximum of a Brownian motion with negative
    drift is exponentially distributed, and thus
    \[
        \P_{L-R}( \tau < \infty ) = e^{- 2 c^* (R-z^*)}.
    \]
    We obtain the desired bound by choosing $R_N = b \log N$ with $b <
        1-ac^*$ and using~\eqref{eq:continuity}.
\end{proof}

\begin{lemma} \label{lem:boundSpineProcess2}
    For any $0 < a < 1/\beta$, there exist $\mathbf{c} \in (0,1)$ and
    $\Delta > 0$ such that, for all $A\geq 1$,
    \[
        \limsup_{N \to \infty}
        \sup_{x \in (0, L)} \P_x( \zeta_{aL} \ge \mathbf{c} L)
        \le N^{-\Delta}.
    \]
\end{lemma}

\begin{proof}
    Fix some $\mathbf{c} \in (0,1)$.
    Since $F(\sqrt{2\lambda}, z) \le -\sqrt{2\lambda}$,
    Lemma~\ref{lem:boundSpineProcess} and \eqref{eq:SDE_drift} imply that
    \begin{align*}
        \sup_{x \in (0,L)} \P_x( \zeta_{aL} \ge \mathbf{c} L)
         & =
        \sup_{x \in (L-1,L)} \P_x( \zeta_{aL} \ge \mathbf{c} L) \\
         & \le
        \sup_{x \in (L-1,L)}
        \Big(
        \P_x\big( \inf_{s \in (0,aL)} \zeta_s \le 1 \big)
        +
        \P_x\big( L + B_{aL} - \sqrt{2\lambda} aL \ge
            \mathbf{c} L \big)
        \Big).
    \end{align*}
    Moreover
    \begin{equation*}
        \P( L + B_{aL} - \sqrt{2\lambda} aL \ge \mathbf{c} L)
        =
        \P\big(
        B_{aL}
        \ge
        L \big( \mathbf{c} - 1 + \sqrt{2\lambda} a \big)
        \big).
    \end{equation*}
    Since $\sqrt{2\lambda} \to \beta$ as $N \to \infty$ by
    Lemma~\ref{lem:spectral}~(i), $\mathbf{c}$ can be chosen large enough
    so that $\liminf_{N \to \infty} \sqrt{2\lambda}a > 1 - \mathbf{c}$.
    In this case, there are some constants $C > 0$ and $\Delta > 0$ such
    that
    \[
        \P( L + B_{aL} - \sqrt{2\lambda} aL \ge \mathbf{c} L)
        \le e^{- CL} \le N^{-\Delta}. \qedhere
    \]
\end{proof}

In the next lemma we show that, when the spine process starts close to the right
boundary, it is rapidly driven back to the left and spends only a time
of order $1$ in a neighbourhood of $L$. In the sequel, we write
\begin{equation} \label{def:spine_fromL}
    \tilde{\zeta}_t = L-\zeta_t,
\end{equation}
for the spine process seen from $L$.

\begin{lemma} \label{lem:integrabilityReversedSpine}
    Fix some $a < 1/\beta$. For all $A\geq 1$,
    \[
        \forall b > 0,\quad
        \forall R > 0,\quad
        \adjustlimits \sup_{N \ge 1} \sup_{z \in (0, R)}
        \int_0^{aL} \E_{L-z}[ e^{- b \tilde{\zeta}_s} ] \diff s < \infty.
    \]
\end{lemma}
\begin{proof}
    First, note that
    $-F(\sqrt{2\lambda}, z) \ge \sqrt{2 \lambda}$. Thus, on the event
    that $\tilde{\zeta}_s$ does not reach $L-1$,
    \[
        d \tilde{\zeta}_s \ge \sqrt{2\lambda} \diff s + dB_s.
    \]
    Therefore,
    \begin{align*}
        \int_0^{aL} \E_{L-z}\big[ e^{-b \tilde{\zeta}_s} \big]\diff s
         & \le
        aL \P_{L-z}\big(
        \sup_{s \in (0,aL)} \tilde{\zeta}_s \ge L-1
        \big)
        + \int_0^\infty \E\big[ e^{-b(\sqrt{2\lambda}s + B_s)} \big]
        \diff s \\
         & =
        aL \P_{L-z}\big(
        \sup_{s \in (0,aL)} \tilde{\zeta}_s \ge L-1
        \big)
        + \int_0^\infty e^{-b \sqrt{2\lambda}s + b^2 s / 2}\diff s.
    \end{align*}
    The integral is finite if $b$ is small enough, and the first term
    vanishes uniformly on compacts as $N \to \infty$ by
    Lemma~\ref{lem:boundSpineProcess}. Since $\tilde{\zeta} \geq 0$, the
    result holds for every $b > 0$.
\end{proof}
\subsection{Integrable moment bounds}

In this last section, we derive rough bounds  that will serve as
integrable bounds when taking the large $N$ limit in the recursion
formula~\eqref{eq:branching:rn0-1}. These bounds are obtained inductively
using Proposition~\ref{def:planar-moments} and the next preparatory
lemma.

\begin{lemma} \label{lem:m2}
    Let $T > 0$ and $\mathbf{c} \in (0,1]$. Then, for any $A \geq 1$, there
    exists a constant $C > 0$ such that
    \begin{equation*}
        \forall t\in[{L^2},TN], \quad
        \sup_{x \in (0, {\bf c} L)} \int_0^{t}\E_{x}(h(\zeta_{u}))\diff u
        \leq C N^{\gamma-1} \big(t \vee N^{\bf c}\big).
    \end{equation*}
    In particular, for $\mathbf{c}=1$ and $t=TN$,
    \begin{equation*}
        \sup_{x \in (0, L)}
        \int_0^{TN} \E_{x}(h(\zeta_{u})) \diff u \leq C N^\gamma.
    \end{equation*}
\end{lemma}

\begin{proof}
    Noting that for every $t>0$, $\mathbf{1}_{s\in[0,t]}\leq e^{\frac{t-s}{t}}$ we see that
    \begin{equation*}
        \int_0^t q_s(x, y)\diff s \leqslant e G_{\frac{1}{t}}(x, y).
    \end{equation*}
    It then follows from Fubini's theorem that
    \begin{equation*}
        \int_0^{t}\E_{x}(h(\zeta_{s})) \diff s = \int_0^{t}\int_0^{L}
        q_s(x,y)h(y)\diff y \ \diff s \leq
        e \int_0^{L} h(y) G_\xi (x,y)\diff y, \quad \text{with} \quad \xi=\frac{1}{t}.
    \end{equation*}
    By the expression for the Green's function in \eqref{def:green},
    \begin{multline*}
        v(x)\int_0^{L} h(y) G_\xi (x,y)\diff y
        \\= (\omega_{\lambda_\infty+\xi})^{-1} d_{\lambda_\infty+\xi}(x)
        \int_0^x h(y) v(y) \vl(y) \diff y
        + (\omega_{\lambda_\infty+\xi})^{-1} \vl(x) \int_x^L h(y) v(y)
        d_{\lambda_\infty+\xi}(y) \diff y.
    \end{multline*}
    We know from Lemma~\ref{lem:spectral} and
    Lemma~\ref{lem:boundGreen} that
    \begin{align*}
        J_1 & \coloneqq (\omega_{\lambda_\infty+\xi})^{-1} d_{\lambda_\infty+\xi}(x)
        \int_0^x h(y) v(y) \vl(y) \diff y                                                                         \\
            & =(1\wedge  x\wedge(L-x))e^{-\beta x}\left(O(t)e^{(\mu-3\beta ) x}+O(1)e^{(\mu-\beta) x}\right) \\
            & =v(x)\left(O(t)e^{(\mu-3\beta ) x}+O(1)e^{(\mu-\beta) x}\right).
    \end{align*}
    Similarly, we get that
    \begin{align*}
        J_2 & = (\omega_{\lambda_\infty+\xi})^{-1}\vl( x)\int_ x^{L}
        h(y)v(y)d_{\lambda_\infty+\xi}(y)\diff y =e^{(\mu-3\beta)L}v(x)(O(t)+O(1)e^{2\beta x}).
    \end{align*}
    Combining this with \eqref{def:LA0}, we get that, for all
   $x \in (0, \mathbf{c} L)$,
    \begin{equation*}
        J_1 \leq C v(x) (tN^{\gamma-1}+N^{\bf{c}\gamma})
        = C v(x) N^{\gamma-1} (t + N^{1-\gamma+\bf{c}\gamma}), \quad
        J_2 \leq Cv(x)N^{\gamma-1}(t+N^{\bf{c}}),
    \end{equation*}
    where the constants ${C}$ may depend on $A$ and $T$ but not on $N$.
    Then, note that
    \[
        1 - \gamma + \mathbf{c}\gamma = \mathbf{c} +
        (1-\gamma)(1-\mathbf{c}) < \mathbf{c}
    \]
    since $\gamma > 1$ and $\mathbf{c} < 1$. Thus
    \begin{equation*}
        J_1+J_2\leq \tilde{C}v(x)N^{\gamma-1}(t\vee N^\c).
    \end{equation*}
\end{proof}

\begin{lemma}\label{lem:momentBound}
    For any $T > 0$, $A \ge 1$, and $k \ge 1$ there exists a constant $C >
        0$ such that, for $N$ sufficiently large,
    \[
        \sup_{x \in (0, L)} \sup_{t \in [0, TN]} \mathbf{M}^{k,t}_x[1]
        \le C N^{\gamma (k-1)}
        \quad\text{or equivalently}\quad
        \sup_{x \in (0, L)} \sup_{t \in [0, T]} \hat{\mathbf{M}}^{k,t}_x[1]
        < \infty.
    \]
\end{lemma}

\begin{proof}
    The result is derived by induction. For $k=1$, we use
    that $\M^{1,t}_x[1] = e^{-wt}$ by Proposition~\ref{def:planar-moments}~(i), 
    and that $wN \to c_{w}/A^{\alpha-1}$ as $N \to \infty$ by
    Lemma~\ref{lem:spectral}~(ii). Let us now take $k\geq 2$.
    Suppose that the result holds for $i < k$, and let $(C_i(A,T))$ be
    the corresponding constants. Proposition~\ref{def:planar-moments}~(ii) 
    implies that
    \begin{equation}
        \label{eq:bound_ind}
        \M^{k,t}_x[1]=
        \int_0^t e^{-w (t-s)}
        \E_x\left[ r(\zeta_{t-s}) h(\zeta_{t-s})
            \sum_{n=1}^{k-1}\M_{\zeta_{t-s}}^{n,s}[1]
            \M_{\zeta_{t-s}}^{k-n,s}[1]
            \right]
       \diff s,
    \end{equation}
    where the right-hand side is obtained by summing over all the
    $c = (n, k-n)$-tree decompositions.
    This yields
    \begin{align*}
        \mathbf{M}_x^{k,t}[1] & \leq ||r||_\infty \sum_{n=1}^{k-1} C_k(A,T) C_{n-k}(A,T)
        N^{\gamma(k-2)} \int_0^{NT}\E_{x}(h(\zeta_u)) \diff u
    \end{align*}
    and the result follows from Lemma~\ref{lem:m2} with ${\bf c}=1$.
\end{proof}

We will need extensively the following immediate consequence of the
moment bounds above, which we state as a separate corollary.

\begin{cor} \label{cor:usefulBound}
    For any $A \ge 1$, any $k > n \ge 1$, there exists $C > 0$ such that,
    for $N$ large enough,
    \[
        \E_\Pi\big[
        r(\bar{\zeta}_s) h(\bar{\zeta}_s)
        \hat{\M}^{n,t-s}_{\bar{\zeta}_s}[1]
        \hat{\M}^{k-n,t-s}_{\bar{\zeta}_s}[1]
        \big]
        \le
        C N^{\gamma-1},
    \]
    or, equivalently,
    \[
        \E_\Pi\big[
        r(\bar{\zeta}_s) h(\bar{\zeta}_s)
        \M^{n,t-s}_{\bar{\zeta}_s}[1]
        \M^{k-n,t-s}_{\bar{\zeta}_s}[1]
        \big]
        \le
        C N^{\gamma(k-\alpha)}.
    \]
\end{cor}

\begin{proof}
    This is a straightforward combination of Lemma~\ref{lem:momentBound}
    and of the bound \eqref{eq:boundIntegralPi} in Corollary~\ref{lem:eigenBound}.
\end{proof}

Let us end this section with a technical result that we will need in the
proof of Theorem~\ref{th:cv_mmm}. Recall that $X_i \colon \mathbb{U}^*_k
    \to \R_+$ denotes the mark (position) of the $i$-th leaf of a marked
ultrametric matrix.

\begin{lemma} \label{lem:57}
    Let $k\in\mathbb{N}$ and $a < 1/\beta$. There exists
    $\mathbf{c} \in (0, 1)$ and $\Delta > 0$ such that, for any $A \geq
        1$, there is a constant $C > 0$ s.t.
    \begin{equation*}
        N^{-\gamma(k-1)}
        \sup_{x \in (0,L)} \mathbf{M}_x^{k,aL}\left[
        \mathbf{1}_{\exists i\in[k]: X_i\geq \mathbf{c}L}
        \right]
        \le CN^{-\Delta}.
    \end{equation*}
\end{lemma}

\begin{proof}
    We know from Proposition~\ref{def:planar-moments}~(i) and
    Lemma~\ref{lem:boundSpineProcess2} that, for all sufficiently large
    $N$, all $A \geq 1$ and all $y \in (0,L)$,
    \begin{align*}
        \mathbf{M}_y^{1,aL}\left[
        \mathbf{1}_{{X_1} \geq \mathbf{c}L}
        \right]
         & = e^{-waL}\mathbb{P}_y\left(\zeta_{aL} \geq
        \mathbf{c}L \right)
        \leq N^{-\Delta}.
    \end{align*}
    A union bound then yields that
    \begin{equation*}
        \mathbf{M}_x^{k,aL}\left[
        \mathbf{1}_{\exists i\in[K]: {X_i}\geq \mathbf{c}L}
        \right]
        \leq \sum_{i=1}^{k} \mathbf{M}_x^{k,aL}\left[
        \mathbf{1}_{ {X_i}\geq \mathbf{c}L}
        \right]
        = k\mathbf{M}_x^{k,aL}\left[
        \mathbf{1}_{ {X_1}\geq \mathbf{c}L}
        \right].
    \end{equation*}
    Hence, it is sufficient to prove that for all  $k\in \N$ and all $x\in(0,L)$,
    \begin{equation}\label{eq:ubound}
        \mathbf{M}_x^{k,aL}\left[
        \mathbf{1}_{ {X_1}\geq \mathbf{c} L}
        \right]
        \leq N^{\gamma(k-1)} (aL)^{k-1}
        \mathbf{M}_x^{1,aL}\left[
        \mathbf{1}_{{X_1}\geq \mathbf{c}L }
        \right].
    \end{equation}
    We prove this bound by induction. For $k=1$, \eqref{eq:ubound} is an
    equality. Let $k\geq 2$. Assume that \eqref{eq:ubound} holds up until
    rank $k$. It then follows from Proposition~\ref{def:planar-moments}
    and our moment bound from Lemma~\ref{lem:momentBound} that
    \begin{align*}
        \mathbf{M}_x^{k,aL}\left[\mathbf{1}_{ {X_1}\geq \mathbf{c}L}\right]
         & =\int_{0}^{aL} e^{-w(aL-u)}
        \E_{x}\left[ r(\zeta_{aL-u}) h(\zeta_{aL-u})
        \sum_{n=1}^{k-1} \M_{\zeta_{aL-u}}^{n,u}\left[
        \mathbf{1}_{ {X_1}\geq \mathbf{c}L}
        \right]
        \M^{k-n,u}_{\zeta_{aL-u}}[1]
        \right]\diff u,                         \\
         & \leq N^{\gamma (k-2)} (aL)^{k-2}
        \mathbf{M}_x^{1,aL}\left[
        \mathbf{1}_{{X_1}\geq \mathbf{c}L}
        \right]
        \int_0^{aL} e^{-w(aL-u)}
        \E_{x}[r(\zeta_{aL-u}) h(\zeta_{aL-u})] \diff u.
    \end{align*}
    The claim~\eqref{eq:ubound} then follows from~\eqref{eq:easy_bound1}.
    We conclude the proof by choosing $\Delta'$ such that $(aL)^{k-1}
        N^{-\Delta}\leq N^{-\Delta'}$.
\end{proof}

\section{The BBM seen from the tip}
\label{sect:BB
    M-from-tip}

We now consider the reversed process introduced in
Section~\ref{sec:step2} (Step 2), that is, a branching Brownian motion
with branching rate $\frac{1}{2}$, positive drift $+\mu$ and killed at
$0$. As outlined in Step~2 and Step~3 of the heuristics, this reversed
process provides a good approximation of the BBM on $(0,L)$, when started
close to the cutoff $L$ and over a short time interval ($\vep\propto L$).
This approximation, in turn, allows us to characterise the jump measure
of the limiting $\alpha$-stable genealogy.

Let us recall some notation that we will need, which were mostly introduced in
Section~\ref{sec:step2} (Step~2 of the heuristics). By analogy with the
BBM, we write $\la{\mathcal{N}}_t$ for the set of particles at time $t$
in the reversed BBM and $\la{X_v}(t)$ for the location at that time of $v
    \in \la{\mathcal{N}}_t$. We write $\la{\P}_z$ for a probability measure
under which the process starts from a single particle at $z > 0$, and let
$\la{\E}_z$ be the corresponding expectation. In
\eqref{eq:notation_reversed}, we introduced the following functions
\begin{equation*}
    \la{v}(z) = 2 e^{\beta} \sinh(\beta z),\quad
    \la{h}(z) = c_h \la{v}(z) e^{-\mu z},\quad
    \la{\widetilde{h}}(z) = c_{\tilde{h}} \la{v}(z) e^{\mu z},\quad
    \la{\Pi}(z) = \la{\widetilde{h}}(z) \la{h}(z).
\end{equation*}
It is straightforward to check that $\la{h}$ is harmonic for the
reversed BBM and we introduced the corresponding additive martingale in
\eqref{eq:reversedMartingale} as
\begin{equation*}
    \forall t \ge 0,\quad \la{W_t} =
    \sum_{v \in \la{\mathcal{N}}_t} \la{h}\left(\la{X_v}(t)\right).
\end{equation*}
This martingale is positive and thus converges a.s.\ to a limit
$\la{W}_\infty$. Mimicking the notation for the moments of the BBM from
\eqref{eq:biasedMomentBBM}, we defined the $k$-th moment of the reversed
BBM in \eqref{eq:biasedReversedMoment} as
\[
    \la{\M}^{k,t}_z[1] \coloneqq
    \frac{1}{\la{h}(z)}
    \la{\E}_z\Bigg[
        \sum_{v_1 < \dots < v_k \in \la{\mathcal{N}}_t}
        \prod_{i=1}^k \la{h}(\la{X_v}(t))
        \Bigg].
\]
The aim of this section is to prove the following result.

\begin{proposition} \label{prop:limitReversed}
    For any $z > 0$ and $k \ge 1$,
    \[
        \lim_{t \to \infty} \la{\M}^{k,t}_z[1]
        \eqqcolon \la{\M}^{k,\infty}_z[1]
        =
        \frac{\la{\E}_z[\la{W}^k_\infty]}{k!\la{h}(z)} < \infty.
    \]
    Moreover, there exists a measure $\Lambda$ on $(0,\infty)$ with
    a finite exponential moment in the sense of
    \eqref{eq:finiteExpMoment} and such that
    \begin{equation*}
        \forall k \ge 2,\quad
        \int_0^\infty z^k \Lambda(\diff z)
        =
        \frac{k!}{2} \int_0^\infty \la{\Pi}(z) \la{h}(z) \sum_{p=1}^{k-1}
        \la{\M}^{p,\infty}_z[1] \la{\M}^{k-p,\infty}_z[1] \diff z.
    \end{equation*}
\end{proposition}

\subsection{Integrable moment bounds}

As for the BBM on $(0,L)$, our proof relies on a recursive formula for
the moments of the reversed BBM. To state this formula, let us introduce
the reversed spine process $(\la{\zeta_t})_{t \ge 0}$ as the solution to
the SDE
\begin{equation} \label{eq:reversed_SDE}
    \diff \la{\zeta_t} = -F(\beta, \la{\zeta_t})\diff t + \diff B_t, \quad t > 0,
\end{equation}
where $F$ is defined in \eqref{eq:SDE_drift}. Recall that $\la{h}$ is
harmonic for the reversed BBM. It is straightforward to check that
the reversed spine process corresponds to Doob's $\la{h}$-transform of
the reversed BBM. The moments of the reversed BBM can then be computed as
follows using Proposition~\ref{def:planar-moments}.

\begin{proposition}[Recursive equations for the moments of the reversed process]
    For any $z > 0$,
    \begin{equation} \label{eq:manyToFewReversed}
        \la{\M}^{1,t}_z[1] = 1;\quad
        \forall k\geq 2, \quad \la{\M}^{k,t}_z[1]
        = \frac{1}{2} \int_0^t \la{\E}_z\left[
        \la{h}(\la{\zeta_s})
        \sum_{p=1}^{k-1} \la{\M}^{p,t-s}_{\la{\zeta_s}}[1]
        \la{\M}^{k-p,t-s}_{\la{\zeta_s}}[1]\right]\diff s.
    \end{equation}
\end{proposition}

The reversed process turns out to be much easier to study than the BBM on
$(0,L)$. In particular, the reversed spine process $(\la{\zeta_t})_{t \ge
            0}$ is transient, and we do not need to consider an exponential tilting
of its density to define its Green's function. The next result
follows from the explicit expression for the Green's function of a
one-dimensional diffusion, see for instance
\cite[Theorem~3.19]{etheridge2011some}, and from a simple computation.

\begin{lemma}[Reversed Green's function] \label{lem:green:reversed}
    For any $z, y > 0$, define the Green's function of $(\la{\zeta_t})_{t \ge 0}$ as
    \[
        \la{G}(z,y) \coloneqq
        2 \la{v}(y)^2 \int_{z \vee y}^\infty \la{v}(z')^{-2} \diff z'.
    \]
    Then for any measurable $f \colon [0,\infty) \to [0,\infty)$
    and any $z > 0$,
    \begin{equation} \label{eq:occupationMeasure}
        \la{\mathbb{E}}_z\left[
            \int_0^\infty f(\la{\zeta_t}) \diff t
            \right]
        = \int_0^\infty \la{G}(z,y) f(y) \diff y.
    \end{equation}
\end{lemma}

Those two results readily lead to an integrable bound on the moments of
the reversed BBM.

\begin{lemma} \label{lem:rough:b:reversed}
    For any $k \in \mathbb{N}$
    \begin{equation*}
        \sup_{z > 0} \la{h}(z) < \infty,
        \quad \sup_{z,y > 0} \la{G}(z,y) < \infty,
        \quad \sup_{z > 0} \int_0^\infty
        \la{\E}_z[\la{h}(\la{\zeta_s})] \diff s < \infty,
        \quad \text{and} \quad
        \sup_{t,z > 0} \la{\M}^{k,t}_z[1] < \infty,
    \end{equation*}
\end{lemma}

\begin{proof}
    The bound on $\la{h}$ and $\la{G}$ can be directly checked. For the
    third bound, by Lemma~\ref{lem:green:reversed},
    \begin{align*}
        \int_0^\infty \la{\E}_z[ \la{h}(\la{\zeta_s}) ]\diff s
         & = \int_0^\infty \la{h}(y) \la{G}(z,y) \diff y                \\
         & \le \sup_{z,y'} \la{G}(z,y') \int_0^\infty \la{h}(y) \diff y \\
         & < \infty.
    \end{align*}
    The last point follows by an induction. For $k=1$ there is nothing to
    prove. For any $k \ge 2$, we see from
    \eqref{eq:manyToFewReversed} that
    \begin{align*}
        \la{\M}^{k,t}_z[1]
         & = {\frac{1}{2}}  \int_0^t \la{\E}_z\Big[
        \la{h}(\la{\zeta_s}) \sum_{p=1}^{k-1}\la{\M}^{p,t-s}_{\la{\zeta_s}}[1]
        \la{\M}^{k-p,t-s}_{\la{\zeta_s}}[1]\Big] \diff s                                        \\
         & \le{\frac{1}{2}} \Bigg( \sum_{p=1}^{k-1} \Big(\sup_{s,y > 0} \la{\M}^{p,s}_y(1)\Big)
        \Big(\sup_{s,y > 0} \la{\M}^{k-p,s}_y[1]\Big) \Bigg)
        \int_0^t \E_z[ \la{h}(\la{\zeta_s}) ] \diff s,
    \end{align*}
    and the right-hand side is uniformly bounded by the second
    point.
\end{proof}

\subsection{Proof of Proposition~\ref{prop:limitReversed}}

We start with the first part of Proposition~\ref{prop:limitReversed} as a
separate lemma.

\begin{lemma} \label{lem:limitMomentReversed}
    For any $z > 0$ and $k \ge 1$,
    \[
        \lim_{t \to \infty} \la{\M}^{k,t}_z[1]
        = \frac{\la{\E}_z[\la{W}^k_\infty]}{k!\la{h}(z)} < \infty.
    \]
\end{lemma}

\begin{proof}
    We first prove that the limit of $\la{\M}^{k,t}_z[1]$ exists when
    $t\to\infty$. By the recursive formula for moments
    \eqref{eq:manyToFewReversed},
    \[
        \la{\M}^{k,t}_z[1]
        =
        \frac{1}{2} \int_0^t \la{\E}_z\left[ \la{h}(\la{\zeta_s})
        \sum_{p=1}^{k-1} \la{\M}^{p,t-s}_{\la{\zeta_s}}[1]
        \la{\M}^{k-p,t-s}_{\la{\zeta_s}}[1]\right]\diff s.
    \]
    A simple induction combined with the bounded convergence theorem
    (using the bound in Lemma~\ref{lem:rough:b:reversed}) shows that
    \begin{equation} \label{eq:recursionReversed}
        \forall z > 0, \quad
        \la{\M}^{k,\infty}_z[1] \coloneqq
        \lim_{t \to \infty} \la{\M}^{k,t}_z[1]
        =
        \frac{1}{2} \int_0^t \la{\E}_z\left[ \la{h}(\la{\zeta_s})
        \sum_{p=1}^{k-1} \la{\M}^{p,\infty}_{\la{\zeta_s}}[1]
        \la{\M}^{k-p,\infty}_{\la{\zeta_s}}[1]\right]\diff s.
    \end{equation}

    It now remains to show that $\la{\M}^{k,\infty}_z[1] = \la{\E}_z[\la{W}^k_\infty]
        / (k!\,\la{h}(z))$.
    Recall the definitions of $\la{W}_t$ and $\la{\M}^{k,t}_z[1]$
    in \eqref{eq:reversedMartingale} and \eqref{eq:biasedReversedMoment}.
    Expanding the product, $\la{W}_t^k$ can be expressed as sum over all
    $k$-tuples of individuals at time $t$, whereas $\la{\M}^{k,t}_z[1]$
    only involves a sum over ordered $k$-tuples of distinct individuals.
    By removing the ``diagonal terms'' from the sum and ordering the
    $k$-tuples, we obtain that
    \begin{equation} \label{decomp:W}
        \la{\E}_z[\la{W}_t^k] = k!\,\la{h}(z) \la{\M}_z^{k,t}[1]
        + \la{\E}_z\left[
            \sum_{v_1,\dots,v_k \in \la{\mathcal{N}_t}}
            \indic_{\{ \exists i \ne j : v_i = v_j\}}
            \prod_{p=1}^k \la{h}(\la{X}_{v_p}(t))
            \right].
    \end{equation}
    By a union bound,
    \begin{align}
        \la{\E}_z
        \left[
            \sum_{v_1,\dots,v_k \in \la{\mathcal{N}_t}}
            \indic_{\{ \exists i \ne j : v_i = v_j\}}
            \prod_{p=1}^k \la{h}(\la{X}_{v_p}(t))
            \right]
         & \le \binom{k}{2}
        \la{\E}_z\left[
            \sum_{v_1,\dots,v_k \in \la{\mathcal{N}_t}}
            \indic_{\{ v_{k-1} = v_k\}}
            \prod_{p=1}^k \la{h}(\la{X}_{v_p}(t))
        \right] \nonumber   \\
         & \le \binom{k}{2}
        \la{\E}_z\left[
        \left( \sup_{v\in\la{\mathcal{N}_t}}\la{h}(\la{X}_v(t)) \right)
        \cdot \la{W}_t^{k-1}
        \right]. \label{eq:boundWMoment}
    \end{align}
    Since $\la{h}(z)$ and $\la{\M}^{k,t}_z[1]$ are uniformly bounded by
    Lemma~\ref{lem:rough:b:reversed}, a simple induction shows that
    \[
        \forall k \ge 0,\quad
        \sup_{t, z > 0} \la{\E}_z\big[ \la{W}_t^k \big] < \infty.
    \]
    Therefore, $(\la{W}_t^k)_{t \ge 0}$ is uniformly integrable for all
    $k \ge 1$ and
    \[
        \forall z > 0,\quad
        \lim_{t \to \infty} \la{\E}_z[\la{W}_t^k] =
        \la{\E}_z[\la{W}^k_\infty \big].
    \]
    All what remains to show is that the second term in the right-hand
    side of \eqref{decomp:W} vanishes, which we prove by applying the
    bounded convergence theorem to \eqref{eq:boundWMoment}. First, note
    that by removing the killing boundary, the reversed BBM can be
    coupled to a standard BBM with drift $+\mu$ and branching rate
    $\frac{1}{2}$ in such a way that the latter has more particles than
    the former. Since it is well known that the left-most particle of a
    BBM with \emph{no} drift and branching rate $\frac{1}{2}$ moves at
    speed $-1$, adding a drift $\mu > 1$ ensures that this left-most
    particle goes to infinity. Therefore,
    \[
        \adjustlimits \lim_{t \to \infty} \inf_{v \in \la{\mathcal{N}}_t}
        \la{X}_v(t) = \infty,
        \qquad \text{a.s.}
    \]
    Recalling that $\la{h}(z) \to 0$ as $z \to \infty$, this shows that the
    integrand in \eqref{eq:boundWMoment} vanishes a.s.\ as $t \to
        \infty$, and since $(\la{W}^k_t)_{t \ge 0}$ is uniformly integrable
    the result follows.
\end{proof}

\begin{proof}[Proof of Proposition~\ref{prop:limitReversed}]
    The first part of the result has been proved in
    Lemma~\ref{lem:limitMomentReversed} and it only remains to show the
    second point. Let $k \ge 2$. By \eqref{eq:recursionReversed} and the
    definition of the Green's function in Lemma~\ref{lem:green:reversed},
    \begin{align} \label{eq:limitMomentReverse}
        \la{\M}^{k,\infty}_z[1]
         & = \frac{1}{2}\E_z\left[\int_0^\infty \la{h}(\la{\zeta_s})
            \sum_{p=1}^{k-1}\left( \la{\M}^{p,\infty}_{\la{\zeta_s}}[1]
            \la{\M}^{k-p,\infty}_{\la{\zeta_s}}[1] \right) \diff s
        \right] \nonumber                                            \\
         & = \frac{1}{2} \int_0^\infty \la{G}(z, y) \la{h}(y)
        \sum_{p=1}^{k-1} \left(
        \la{\M}^{p,\infty}_y[1]
        \la{\M}^{k-p,\infty}_y[1]
        \right) \diff y.
    \end{align}
    It will be convenient to note that
    \begin{equation} \label{eq:otherExpressionMeasure}
        \la{\Pi}(z) \la{G}(z, y) = 2 c_{h} c_{\tilde h} \la{v}(z)^2
        \la{v}(y)^2 \int_{y \vee z}^{\infty} \la{v}(z')^{-2} \diff z'
        = \la{\Pi}(z \wedge y) \la{G}(y \vee z, y\vee z).
    \end{equation}
    It can also directly be checked from the explicit expression of
    $\la{v}$ that
    \[
        \lim_{z \to \infty}
        \la{G}(z, z)
        = \lim_{z \to \infty}
        2 \int_z^\infty \Big(\frac{\la{v}(z)}{\la{v}(y)}\Big)^2 \diff y
        = \frac{1}{\beta}.
    \]
    Using that $\la{\Pi}$ is increasing, we deduce from these two
    expressions that
    \[
        \forall y > 0,
        \qquad
        \lim_{z \to \infty} \la{\Pi}(z) \la{G}(z,y) = \frac{\la{\Pi}(y)}{\beta},
        \qquad
        \la{h}(y) \la{\Pi}(z) \la{G}(z,y)
        \le \la{h}(y) \la{\Pi}(y) \sup_{z' > 0} \la{G}(z', z').
    \]
    Noting that $y \mapsto \la{h}(y) \la{\Pi}(y)$ is integrable since
    $\mu > 3\beta$ under \eqref{Hwp}, the dominated convergence theorem
    applied to \eqref{eq:limitMomentReverse} shows that
    \begin{equation} \label{eq:momentReversed1}
        \lim_{z \to \infty} k!\, \la{\Pi}(z) \la{\M}^{k,\infty}_z[1]
        = \frac{k!}{\beta} \int_0^\infty \la{\Pi}(y) \la{h}(y)
        \sum_{p=1}^{k-1} \la{\M}^{p,\infty}_y[1]
        \la{\M}^{k-p,\infty}_y[1] \diff y.
    \end{equation}
    Let $\mathscr{L}_z(\la{W}_\infty)$ denote the law of $\la{W}_\infty$
    under $\la{\P}_z$ and let $\Lambda_z = \beta/2 \la{\widetilde{h}}(z)
    \mathscr{L}_z(\la{W}_\infty)$. Using Lemma~\ref{lem:limitMomentReversed},
    equation~\eqref{eq:momentReversed1} above can be reformulated as
    \begin{multline*}
        \forall k \ge 2,\quad
        \int_{\R_+} y^k \Lambda_z(\diff y)
        = \frac{\beta}{2} \la{\widetilde{h}}(z) \la{\E}_z\big[ \la{W}_\infty^k \big]
        = \frac{\beta}{2}k!\, \la{\Pi}(z) \la{\M}^{k,\infty}_z[1] \\
        \longrightarrow
        \frac{k!}{2} \int_0^\infty \la{\Pi}(y) \la{h}(y)
        \sum_{p=1}^{k-1} \la{\M}^{p,\infty}_y[1]
        \la{\M}^{k-p,\infty}_y[1] \diff y,
        \qquad \text{as $z \to \infty$}.
    \end{multline*}

    We will end the proof by using an adaptation of the method of moments to
    vague convergence, see Lemma~\ref{lem:vagueMethodMoments}. This
    requires us to check a growth condition on the limiting moments which
    ensures that the limit is characterised by its moments. Define a
    sequence $(a_k)_{k \ge 1}$ as
    \[
        a_1 = \la{\M}^{1,\infty}_z[1] = 1,
        \qquad
        \forall k \ge 1,\quad a_k = \sup_{z > 0}
        \big( (1 \vee \la{\Pi}(z)) \la{\M}_z^{k,\infty}[1] \big).
    \]
    Clearly, $\la{\M}^{k,\infty}_z[1] \le a_k$ so that
    \begin{align*}
        \forall k \ge 2, \; \forall z > 0, \quad
        \la{\Pi}(z) \la{\M}^{k,\infty}_z[1]
         & = \frac{1}{2} \int_0^\infty \la{\Pi}(z) \la{G}(z, y) \la{h}(y)
        \sum_{p=1}^{k-1} \left( \la{\M}^{p,\infty}_y[1]
        \la{\M}^{k-p,\infty}_y[1] \right) \diff y                              \\
         & \le \frac{1}{2}
        \int_0^\infty \la{\Pi}(z) \la{G}(z,y) \la{h}(y) d y
        \cdot
        \sum_{p=1}^{k-1} a_p a_{k-p}                                      \\
         & \le \frac{1}{2} \Big( \sup_{z' > 0} \la{G}(z',z') \Big) \cdot
        \int_0^\infty \la{\Pi}(y) \la{h}(y) \diff y
        \cdot
        \sum_{p=1}^{k-1} a_p a_{k-p},
    \end{align*}
    where we have used \eqref{eq:otherExpressionMeasure} in the last
    line. Since $y \mapsto \la{\Pi}(y) \la{h}(y)$ is integrable, there
    exists $C > 0$ such that $a_k \le C \sum_{p=1}^{k-1} a_p a_{k-p}$.
    Using Lemma~\ref{cor:boundShapes}, this ensures that there exists $R
        > 0$ such that $a_k \le R^k$, $k \ge 1$. Therefore, up to choosing a
    larger $R$, this leads to the bound
    \[
        \lim_{z \to \infty} \int_{\R_+} y^k \Lambda_z(\diff y) = 
        \frac{\beta}{2} k!\,
        \lim_{z \to \infty}\la{\Pi}(z) \la{\M}^{k,\infty}_z[1]
        \le \frac{\beta k!\, a_k}{2}
        \le k!\, (R')^k
    \]
    up to choosing $R'$ large enough. This checks the condition of
    Lemma~\ref{lem:vagueMethodMoments}, and we conclude that there exists
    a limit measure $\Lambda$ such that $\Lambda_z \to \Lambda$ vaguely
    as $z \to \infty$ and whose $k$-th moment is given by \eqref{eq:momentReversed1}. 
\end{proof}

\section{Proof of Theorem~\ref{th:cv_mmm}}
\label{sec:recursion}

We now prove Theorem~\ref{th:cv_mmm}. As mentioned above, the bulk of the
work consists in computing the limit of the moments of the BBM, that is,
showing that \eqref{eq:convergenceMoment2} holds, which corresponds to the
first condition of Theorem~\ref{thm:main-cv}. The proof implements the
heuristic arguments which were outlined in Section~\ref{sec:heuristics_BBM}
and follows closely the steps described in that section. Recall the
definition of $\epsilon$ in \eqref{eq:defEps} and $L$ in \eqref{def:LA0}, which
will be used frequently in this section. Moreover, set
\begin{equation}
    \label{def:renorm_times}
    \hep \coloneqq \frac{\vep}{N}, \quad \text{and} \quad
    \t \coloneqq \frac{L^2}{N}.
\end{equation}

\subsection{Step 1: fast mixing}

In this section, we carry out Step~1 of the heuristics by establishing
\eqref{eq:approxRecursion} in Section~\ref{sec:step1}. Recall the
definition of the map $S^\epsilon$ in \eqref{eq:mergedMap}.

\begin{proposition} \label{lem:approx:1}
    Let $k \geq 2$ and  let $c$ be a composition of $k$ with $\abs{c} = p
        < k$. Let $G$ be a functional of the form \eqref{eq:productFunctional},
    that is
    \begin{equation} \label{eq:Feps1}
        \forall U^* \in \mathbb{U}^*_k,\quad
        G(U^*) = \indic_{\{ c(U) = c\}} f(\tau(U)) 
        \prod_{i=1}^{\abs{c}} F_i(U^*_i),
    \end{equation}
    Let $t>0$, $A\geq 1$ and  $\c\in(0,1)$. Uniformly in
    $x \in [0, \c L]$,
    \begin{multline*}
        \lim_{N\to\infty}
        \hat{\mathbf{M}}_{x}^{k,t}[G \circ S^{\hat{\epsilon}}]
        -
        \frac{\hat{m}_{p}}{p!} \int_{0}^{t} f(t-u) e^{-wNu}
        \prod_{i=1}^p \hat{\mathbf{M}}_{\Pi}^{c_{i},t-u }[F_{i}] \diff u =0 \\
        \text{where } \hat{m}_{p} \equiv \hat m_{p,(N,A)} =
        \frac{p!}{N^{\gamma(p-\alpha)}}
        \int_0^L r(y) h(y)
        \sum_{i=1}^{p-1}
        \mathbf{M}_{y}^{i,\eps}[1]
        \mathbf{M}_{y}^{p-i,\eps}[1]
        \Pi(y) \diff y.
    \end{multline*}
\end{proposition}

The proof of this result relies on a mixing argument by showing that the
initial condition can be forgotten, which makes
\eqref{eq:heuristicInitialCondition} from the heuristics rigorous. We
will need the following stronger formulation of this result.

\begin{lemma}[Forgetting the initial condition] \label{prop:spineEquilibriate}
    For any $T > 0$, $A \ge 1$, $k \ge 1$, and $\mathbf{c} \in (0,1)$
    \[
        \adjustlimits \sup_{x \in [0, \mathbf{c}L]} \sup_{t, t' \in [\t, T]}
        \abs*{\hat{\M}^{k,t}_x[F] - \hat{\M}^{k,t'}_\Pi[F]}
        \le C \norm{F}_\infty (o_{N}(1) + \abs{t-t'}),
    \]
    for all continuous bounded $F \colon \mathbb{U}^*_k \to \R$.
\end{lemma}

\begin{proof}
    Let $G$ be a functional of the product form as in
    \eqref{eq:Feps1} with $c = (n, k-n)$, $n < k$. We split the moment
    depending on whether the first branch point occurs before or after
    some fixed time $s \le t$. By the recursive formula for moments of
    Proposition~\ref{def:planar-moments} (after rescaling) and the Markov
    property of the spine process,     
    \begin{align}
        \hat{\M}^{k,t}_x\big[ G(U^*) \indic_{\{ \tau(U) \le t-s \}} \big]
         & = \frac{1}{N^{\gamma-1}} \int_s^t f(t-u) e^{-wNu}
        \E_x\Big[
        r(\bar{\zeta}_u) h(\bar{\zeta}_u)
        \hat{\M}^{n, t-u}_{\bar{\zeta}_u}[F_1]
        \hat{\M}^{k-n, t-u}_{\bar{\zeta}_u}[F_2]
        \Big] \diff u \nonumber                                                               \\
         & = \begin{multlined}[t]
                 \frac{1}{N^{\gamma-1}} \int_0^{t-s} f(t-s-v) e^{-wN(v+s)}
                 \E_x\Big[
                 r(\bar{\zeta}_{v+s}) h(\bar{\zeta}_{v+s}) \\
                 \times
                 \hat{\M}^{n, t-s-v}_{\bar{\zeta}_{v+s}}[F_1]
                 \hat{\M}^{k-n, t-s-u}_{\bar{\zeta}_{v+s}}[F_2]
                 \Big] \diff v
             \end{multlined} \nonumber \\
         & = e^{-wNs} \E_x\big[ \hat{\M}^{k,t-s}_{\bar{\zeta}_s}[G] \big].
        \label{eq:spineEquilibriate1}
    \end{align}
    By a monotone class argument, \eqref{eq:spineEquilibriate1}
    still holds when $G$ is replaced by any continuous bounded $F \colon
        \mathbb{U}^*_k \to \R$. For the rest of the proof, we fix such a
    functional $F$.

    Now, recalling that $\hat{\M}^{k,t-s}_y[1]$ is uniformly bounded by
    Lemma~\ref{lem:momentBound}, there exists $C > 0$ such that    \begin{align}
        \abs*{\hat{\M}^{k,t}_x\big[ F(U^*) \indic_{\{ \tau(U) > t-s \}} \big]}
         & \leq \norm{F}_\infty
        \hat{\M}^{k,t}_x\big[  \indic_{\{ \tau(U) > t-s \}} \big]
        \nonumber                                                   \\
         & \le \sum_{n=1}^{k-1}\frac{\norm{F}_\infty}{N^{\gamma-1}}
        \int_0^s e^{- wNu}
        \E_x\Big[
        r(\bar{\zeta}_u) h(\bar{\zeta}_u)
        \hat{\M}^{n, t-u}_{\bar{\zeta}_u}[1]
        \hat{\M}^{k-n, t-u}_{\bar{\zeta}_u}[1]
        \Big] \diff u \nonumber                                          \\
         & \le \frac{C\norm{F}_\infty}{N^{\gamma-1}}
        \int_0^s \E_x\big[ r(\bar{\zeta}_u) h(\bar{\zeta}_u) \big] \diff u,
        \label{eq:spineEquilibriate2}
    \end{align}
    and where we used Proposition~\ref{def:planar-moments} for the
    second inequality. Integrating the two previous inequalities
    with respect to the stationary measure of the spine process $\Pi$, we
    first obtain that
    \begin{equation*}
        \abs*{\hat{\M}^{k,t}_\Pi[F] - \hat{\M}^{k,t-s}_\Pi[F]}
        \le
        \norm{F}_{\infty} \hat{\M}^{k,t-s}_\Pi[1] \big( 1 - e^{-wNs} \big)
        + \frac{C \norm{F}_\infty}{N^{\gamma-1}}
        \cdot \int_0^s \E_{\Pi}[r(\bar{\zeta_u})h(\bar{\zeta}_u)]\diff s.
    \end{equation*}
    We now bound each term: by Corollary~\ref{lem:eigenBound},
    $\E_\Pi[h(\bar{\zeta}_u)] \le C' N^{\gamma-1}$ for some $C' > 0$;
    by Lemma~\ref{lem:spectral}~(ii), $wN \to c_{w} /A^{1-\alpha}$ as
    $N \to \infty$; and by Lemma~\ref{lem:momentBound},
    $\hat{\M}_\Pi^{k,t-s}[1]$ is uniformly bounded. Altogether, we can
    find $C > 0$ such that, for $N$ large enough, 
    \begin{equation} \label{eq:spineEquilibriate3}
        \sup_{0 \le t,t' \le T}
        \abs*{\hat{\M}^{k,t}_\Pi[F] - \hat{\M}^{k,t'}_\Pi[F]}
        \le
        C \norm{F}_\infty \abs{t-t'}.
    \end{equation}
    Using again \eqref{eq:spineEquilibriate1} and
    \eqref{eq:spineEquilibriate2} but at time $s = \t$, we get that
    \begin{multline*}
        \abs{\hat{\M}^{k,t}_x[F] - \hat{\M}^{k,t-\t}_\Pi[F]}
        \le
        e^{-wN \t}
        \abs*{\E_x\big[ \hat{\M}^{k,t-\t}_{\bar{\zeta}_{\t}}[F] \big]
        - \hat{\M}^{k,t-\t}_\Pi[F]} \\
        + \norm{F}_\infty \big(1 - e^{-w N \t}\big) \hat{\M}^{k,t-\t}_\Pi[1]
        + \frac{C \norm{F}_\infty}{N^{\gamma-1}} \int_0^{\t}
        \E_x\big[ r(\bar{\zeta}_u) h(\bar{\zeta}_u) \big] \diff u.
    \end{multline*}
    By Lemma~\ref{lem:m2} (after time rescaling by $1/N$), for any
    $\mathbf{c} \in (0, 1)$,
    \[
        \sup_{x \in (0, \mathbf{c}L)}
        \frac{1}{N^{\gamma-1}}
        \int_0^{\t} \E_x\big[ r(\bar{\zeta}_u) h(\bar{\zeta}_u) \big] \diff u
        \le C N^{-(1-\mathbf{c})}
        \to 0,\quad \text{as $N \to \infty$.}
    \]
    Moreover,
    \[
        \abs*{\E_x\big[ \hat{\M}^{k,t-\t}_{\bar{\zeta}_{\t}}[F] \big]
        - \hat{\M}^{k,t-\t}_\Pi[F]}
        \le
        \norm{F}_\infty \int_0^L \abs*{q_{N \t}(x,y) - \Pi(y)}
        \hat{\M}^{k,t-\t}_y[1] \diff y,
    \]
    and the right-hand side vanishes uniformly in $x \in (0, L)$
    and $t \in (\t, T)$ by  Proposition~\ref{prop:hk}.
    Finally, by Lemma~\ref{lem:spectral}~(ii),
    $wN\t \to 0$ as $N \to \infty$. Those three bounds
    together show that
    \begin{equation} \label{eq:spineEquilibriate4}
        \abs{\hat{\M}^{k,t}_x[F] - \hat{\M}^{k,t-\t}_\Pi[F]}
        = \norm{F}_\infty o_N(1),
    \end{equation}
    where $o_N(1) \to 0$ uniformly in $x \in (0, \mathbf{c}L)$ and $t \ge
        \t$ as $N \to \infty$. The result
    follows by writing, for $x \in (0, \mathbf{c}L)$ and $t,t' \in (\t,
        T)$,
    \begin{align*}
        \abs{\hat{\M}^{k,t}_x[F] - \hat{\M}^{k,t'}_\Pi[F]}
         & \le
        \abs{\hat{\M}^{k,t}_x[F] - \hat{\M}^{k,t-\t}_\Pi[F]}
        +
        \abs{\hat{\M}^{k,t-\t}_\Pi[F] - \hat{\M}^{k,t'}_\Pi[F]} \\
         & \le
        C \norm{F}_\infty o_N(1) + C \norm{F}_\infty \abs{t-t'}
    \end{align*}
    where we used \eqref{eq:spineEquilibriate3} and
    \eqref{eq:spineEquilibriate4}.
\end{proof}

\begin{proof}[Proof of Proposition \ref{lem:approx:1}]
    By Lemma~\ref{prop:spineEquilibriate},
    $\abs{\hat{\M}^{k,t}_x[G \circ S^{\hat{\epsilon}}]
        - \hat{\M}^{k,t}_{\Pi}[G \circ S^{\hat{\epsilon}}]} \to 0$ as $N \to
        \infty$ uniformly in $x \in [0, \mathbf{c}L]$. It is therefore
    sufficient to prove the result starting from the equilibrium
    distribution $\Pi$ rather than from a fixed starting point $x \in [0,
            \c L]$.

    We need to treat the case that $\tau(U) < \hat{\epsilon}$ separately.
    Summing over all possible compositions in \eqref{eq:recursion-f},
    \begin{align*}
        \abs{\hat{\M}^{k, t}_\Pi[\indic_{\{\tau(U) \le \hat{\epsilon}\}}
         & G \circ S^{\hat{\epsilon}}(U^*)]}
        \le \norm{G}_\infty \hat{\M}_\Pi^{k,t}[\indic_{\{\tau(U) \le
                \hat{\epsilon}\}}]
        = N^{-\gamma(k-1)} \norm{G}_\infty \M_\Pi^{k,t}[\indic_{\{\tau(U) \le
        \epsilon \}}]                         \\
         & = N^{-\gamma(k-1)} \norm{G}_\infty
        \sum_{n=1}^k \int_0^{\epsilon} e^{-w(t-u)}
        \int_0^L r(y) h(y) \M^{n,u}_y[1] \M^{k-n,u}_y[1]
        \Pi(y) \diff y \,\diff u.
    \end{align*}
    Using Lemma~\ref{lem:momentBound} to bound the moments and
    \eqref{eq:boundIntegralPi} for the remaining term,
    \[
        \abs{\hat{\M}^{k, t}_\Pi[\indic_{\{\tau(U) \le \hat{\epsilon}\}}
                G \circ S^{\hat{\epsilon}}(U^*)]}
        \le
        C N^{-\gamma} \epsilon N^{\gamma-1}
        = C \frac{\epsilon}{N} = o_N(1).
    \]

    When $\tau(U) > \hat{\epsilon}$, the recursive formula
    \eqref{eq:branching:rn0-1} reads
    \begin{multline*}
        \hat{\M}^{k, t}_\Pi[\indic_{\{\tau(U) > \hat{\epsilon}\}}
            G \circ S^{\hat{\epsilon}}(U^*)]
        = \frac{1}{N^{\gamma(p-\alpha)}} \int_{0}^{t-\hep} f(t-s)  e^{-wNs}
        \E_\Pi\bigg[ r(\bar{\zeta}_{s}) h(\bar{\zeta}_{s})
        \\ \times
        \sum_{k=1}^{p-1} \mathbf{M}_{\bar{\zeta}_{s}}^{k,\eps}\left[
            \prod_{i=1}^{k} \hat{\mathbf{M}}^{c_i,t-s-\hep}_{X_i}[F_i] \right]
        \mathbf{M}_{\bar{\zeta}_{s}}^{p-k,\eps}\left[
        \prod_{i=1}^{p-k} \hat{\mathbf{M}}^{c_{i+k},t-s-\hep}_{X_i}[F_{i+k}] \right]
        \bigg]
       \diff s.
    \end{multline*}
    We divide the integration interval into three subintervals,
    $\{0<s\leq \t\}$, $\{ \t < s < t - \t \}$ and $\{ t - \t \leq s
        < t-\hep\}$ and refer to $\tilde{I}_1$, $\tilde{I}_2$ and
    $\tilde{I}_3$ for the corresponding integrals.

    We first control $\tilde{I}_1$. The bound in
    Corollary~\ref{cor:usefulBound} shows that
    \begin{align*}
        \tilde{I}_1 \leqslant CN^{-\gamma(p-\alpha)}
        \int_{0}^{\t}\E_\Pi\left[ r(\bar{\zeta}_{u}) h(\bar{\zeta}_{u})
        \sum_{k=1}^{p-1}  \mathbf{M}_{\bar{\zeta}_{u}}^{k,\vep}\left[ 1
    \right]  \mathbf{M}_{\bar{\zeta}_{u}}^{p-k,\vep}\left[ 1
    \right]\right] \diff u \leqslant C \t.
    \end{align*}
    A similar bound on $\tilde{I}_3$ follows from the same arguments.

    We now estimate $\tilde{I}_2$. Let $\mathbf{c}'$ be the constant
    in Lemma~\ref{lem:57} corresponding $a=1/(2\beta)$. Since
    $\epsilon = L / 2\beta$, the moment bound in
    Lemma~\ref{lem:momentBound} yields that
    \[
        \frac{1}{N^{\gamma(k-1)}}
        \mathbf{M}^{k,\epsilon}_y\left[
            \indic_{\{ \exists i \in [k] : X_i \ge \mathbf{c}' L \}}
            \prod_{i=1}^k \hat{\M}^{c_i,s}_{X_i}[F_i]
            \right]
        \le
        \frac{C'}{N^{\gamma(k-1)}}
        \mathbf{M}^{k,\epsilon}_y\left[
            \indic_{\{ \exists i \in [k] : X_i \ge \mathbf{c}' L \}}
            \right]
        \xrightarrow[N \to \infty]{}  0,
    \]
    uniformly in $y\in(0,L)$ and $s \in [\t, t]$. Moreover, by
    Lemma~\ref{prop:spineEquilibriate} $\hat{\mathbf{M}}^{c_i,s}_z[F_i]
        \to \hat{\mathbf{M}}^{c_i,s}_\Pi[F_i]$ as $N \to \infty$ uniformly
    for $z \in [0, \mathbf{c}'L]$ and $s \in [\t, t]$. Combining these two
    estimates together
    \begin{equation} \label{eq:lemApprox1}
        \frac{1}{N^{\gamma(k-1)}}
        \abs*{
        \mathbf{M}^{k, \epsilon}_y \left[
            \prod_{i=1}^k \hat{\M}^{c_i,s}_{X_i}[F_i]
            \right]
        -
        \prod_{i=1}^k \hat{\M}^{c_i,s}_\Pi[F_i]
        \mathbf{M}^{k, \epsilon}_y[1]
        }
        \xrightarrow[N \to \infty]{}  0,
    \end{equation}
    uniformly in $y \in (0, L)$ and $s \in [\t, t]$. Furthermore,
    Lemma~\ref{prop:spineEquilibriate} allows us to replace each
    $\hat{\M}^{c_i,s}_{\Pi}[F_i]$ by $\hat{\M}^{c_i, s-\hat{\epsilon}}_{\Pi}[F_i]$
    in the limit above. Another application of Corollary~\ref{cor:usefulBound}
    and \eqref{eq:lemApprox1} above yields that
    \begin{equation*}
        \lim_{N\to\infty} \tilde{I}_2 -
        \int_{\t}^{t-\t} \bigg( f(t-s) e^{-wNs}
        \prod_{i=1}^p \hat{\mathbf{M}}^{c_{i},t-s}_{\Pi}[F_{i}]
        \E_\Pi\left[
        \frac{r(\bar{\zeta}_{s}) h(\bar{\zeta}_{s})}{N^{\gamma(2-\alpha)}}
        \sum_{k=1}^{p-1}
        \frac{\mathbf{M}_{\bar{\zeta}_{s}}^{k,\eps}[1]}{N^{\gamma(k-1)}}
        \frac{\mathbf{M}_{\bar{\zeta}_{s}}^{p-k,\eps}[1]}{N^{\gamma(p-k-1)}}
        \right] \bigg)\diff s
        = 0,
    \end{equation*}
    uniformly in $x \in [0, \c L]$.

    Finally, as for $\tilde{I}_1$ and $\tilde{I}_3$, one can show that
    \begin{equation*}
        \frac{1}{N^{\gamma(p-\alpha)}}
        \int_{s \in [0,\t] \cup [t-\t, t-\t + \hat{\epsilon}]}
        f(t-s) e^{-wNs} \prod_{i=1}^p \hat{\mathbf{M}}^{c_{i},t-s}_{\Pi}[F_{i}]
        \E_\Pi\left[
        r(\bar{\zeta}_{s}) h(\bar{\zeta}_{s})
        \sum_{k=1}^{p-1}
        \mathbf{M}_{\bar{\zeta}_{s}}^{k,\eps}[1]
        \mathbf{M}_{\bar{\zeta}_{s}}^{p-k,\eps}[1]
        \right]\diff s
    \end{equation*}
    tends to $0$ as $N \to \infty$. This concludes the proof of the lemma.
\end{proof}

\subsection{Step 2: reversing the process}

Next, we justify the approximation \eqref{eq:heuristic-1} from
Step~2 of the heuristics (Section~\ref{sec:step2}). Recall the notation
$(\la{\zeta}_t)_{t \ge 0}$ for the reversed spine process introduced in
\eqref{eq:reversed_SDE} and $\la{\M}^{k,t}_z[1]$ for the moments of the
reversed BBM in \eqref{eq:biasedReversedMoment}. Replacing the moments of
the original BBM by those of the reversed one requires that we compare
the eigenelements of the two processes.
The main result of this section is motivated by Proposition \ref{lem:approx:1}.

\begin{proposition} \label{lem:estimateBranchingMoment}
    For all $A\geq 1$ and $p \ge 2$
    \begin{equation*}
        \hat{m}_{p,(N,A)} = \frac{p!}{N^{(p-\alpha)\gamma}}
        \int_0^L r(y) h(y)
        \sum_{i=1}^{p-1} \mathbf{M}_{y}^{i,\eps}\left[ 1 \right]
        \mathbf{M}_{y}^{p-i,\eps}\left[ 1 \right] \Pi(y) \diff y
        \xrightarrow[N \to \infty]{}
        \int_{(0,\infty)} z^p \Lambda_{A}(\diff z)
    \end{equation*}
    where $\hat{m}_{p}$ is defined as in Proposition~\ref{lem:approx:1}
    (or \eqref{eq:m_c} in the heuristics), $\Lambda$ is the measure
    introduced in Proposition~\ref{prop:limitReversed} and $\Lambda_{A}$
    the dilated measure defined in \eqref{eq:rescaled_lambda}.
\end{proposition}

In order to establish the result, we start with some technical lemmas.

\begin{lemma} \label{lem:relationReverseEigenelements}
    For any $A \ge 1$, uniformly for $z$ in compact sets
    \[
        \lim_{N \to \infty} \frac{h(L-z)}{AN^\gamma} = \la{h}(z),\qquad
        \lim_{N \to \infty} \frac{\Pi(L-z)}{(AN^\gamma)^{1-\alpha}} = \la{\Pi}(z).
    \]
\end{lemma}

\begin{proof}
    By Lemma~\ref{lem:spectral}~(ii) and by definition of
    $\beta$ in \eqref{eq:decay_v}, $\sqrt{2 \lambda} = \beta + O(1/N)$
    and so $L \sqrt{2 \lambda} = L \beta + o(1)$, as $N \to \infty$. By
    the explicit expression in \eqref{formula:v1},
    \[
        v(L-z) = \frac{\sinh(\sqrt{2\lambda} z )}{\sinh(\sqrt{2\lambda}(L-1))}
        \sim 2e^\beta \sinh(\beta z ) \exp( - \beta L ),
        \qquad \text{as $N \to \infty$,}
    \]
    uniformly for $z$ in compacts. Recall that, by \eqref{def:LA0},
    $AN^\gamma = e^{(\mu-\beta) L}$ and $(AN^\gamma)^{\alpha} = e^{(\mu+\beta)L}$.
    The result follows by replacing $h$ and $\Pi$ by their expressions
    \[
        \frac{h(L-z)}{AN^\gamma}
        = \frac{c_h v(L-z)e^{\mu(L-z)}}{e^{(\mu-\beta)L}}
        \to
        2 c_h e^{\beta} \sinh(\beta z) e^{-\mu z}
        = \la{h}(z), \qquad \text{as $N \to \infty$}
    \]
    and
    \begin{align*}
        \frac{\Pi(L-z)}{(AN^\gamma)^{1-\alpha}}
         & = \frac{h(L-z)}{AN^\gamma}
        \frac{c_{\tilde{h}} v(L-z)e^{-\mu(L-z)}}{(AN^\gamma)^{-\alpha}} \\
         & \to
        \la{h}(z) \cdot 2 c_{\tilde{h}} e^\beta \sinh(\beta z) e^{\mu z}
        \qquad \text{as $N \to \infty$}                                 \\
         & = \la{h}(z) \la{\widetilde{h}}(z) = \la{\Pi}(z).
        \qedhere
    \end{align*}
\end{proof}

The proof of Step~2 will also rely on approximating of the spine process
starting close to $L$ by the reversed spine process. This is the object
of the following lemma.

\begin{lemma} \label{lem:couplingSpines}
    Let $(\zeta_s)_{s \ge 0}$ be the spine process and define
    $\tilde{\zeta}_s = L - \zeta_s$ as in~\eqref{def:spine_fromL}.
    The reversed process $(\la{\zeta_s})_{s \ge 0}$ can be coupled to
    $(\tilde \zeta_s)_{s \ge 0}$ such that $\la{\zeta_0} = \tilde \zeta_0$ and
    \[
        \forall s \ge 0,\quad \forall \eta>0,\qquad
        \lim_{N \to \infty} \P_{L-z}( \abs{\tilde{\zeta}_s - \la{\zeta_s}} \ge \eta )
        = 0,
    \]
    uniformly for $z$ in compacts sets. 
\end{lemma}

\begin{proof}
    By~\eqref{eq:SDE_drift}, $\tilde{\zeta}$ is the solution to
    \[
        \diff \tilde{\zeta}_t = -F\big( \sqrt{2\lambda}, \tilde{\zeta}_t
        \big) \diff t +  \diff  B_t,
    \]
    as long as $\tilde{\zeta}_t \le L - 1$, for some Brownian motion
    $(B_t)_{t \ge 0}$. We couple $\la{\zeta}$ to $\tilde{\zeta}$ by
    letting $\la{\zeta}$ be the (unique strong) solution to
    \eqref{eq:reversed_SDE} using the same Brownian path $(B_t)_{t \ge 0}$
    as for $\tilde{\zeta}$ and starting from $\la{\zeta}_0 =
        \tilde{\zeta}_0$. By~\eqref{eq:dev_F}, on the event that
    $\tilde{\zeta}_s \le L-1$ for all $s \in (0,t)$, we have
    \[
        \forall 0\leq s \leq t,\qquad
        \abs{\la{\zeta_s} - \tilde{\zeta}_s}
        \leq
        \int_0^s \abs[\big]{F\big(\beta, \la{\zeta_u} \big)
            - F\big( \sqrt{2\lambda}, \tilde{\zeta}_u \big)}
        \diff u
        \le
        \abs{\beta - \sqrt{2\lambda}} s +
        \int_0^s \frac{\left| \tilde{\zeta}_u - \la{\zeta_u}\right|}{\la{\zeta_u}
            \cdot \tilde{\zeta}_u} \diff u.
    \]
    Gronwall's inequality then yields that, again on the event that
    $(\tilde{\zeta}_s)_s$ does not hit $L-1$ before time $t$,
    \begin{equation} \label{eq:couplingSpines-1}
        \abs{\la{\zeta_t} - \tilde{\zeta}_t}
        \le
        \abs{\beta - \sqrt{2\lambda}} t
        \exp\left(\int_0^t \frac{\diff s}{\la{\zeta_s}\cdot
            \tilde{\zeta}_s}\right).
    \end{equation}
    Recall that
    \[
        \lim_{N \to \infty}
        \P_{L-z}\big( \sup_{s \in (0, \vep)} \tilde{\zeta}_s \ge L-1 \big) = 0
    \]
    uniformly for $z$ in compact sets by Lemma~\ref{lem:boundSpineProcess}.
    The result now follows from a simple manipulation: take any sequence
    $z_N \to z$. Given any subsequence, one can extract a further
    subsequence such that almost surely $\sup_{(0, \vep)} \tilde{\zeta}_s <
        L-1$ for $N$ large enough. Then, because $\sqrt{2\lambda} \to
        \beta$ as $N \to \infty$ by Lemma~\ref{lem:spectral}~(i),
    $\tilde{\zeta_s} \to \la{\zeta_s}$ almost surely along that
    subsequence by \eqref{eq:couplingSpines-1}. This shows that
    \[
        \lim_{N \to \infty} \P_{L-z_N}\big(
        \abs{\tilde{\zeta}_s - \la{\zeta_s}} \ge \eta
        \big) = 0,\qquad \eta > 0,
    \]
    and the claim follows since $(z_N)_N$ is an arbitrary converging
    subsequence.
\end{proof}

\begin{lemma} \label{lem:comparisonReversed}
    For any $k \ge 1$ and $A \ge 1$,
    \begin{equation} \label{eq:convMomentRelax}
        \lim_{N \to \infty}
        \frac{\M^{k,\epsilon}_{L-z}[1]}{(AN^{\gamma})^{(k-1)}}
        = \la{\M}_z^{k,\infty}[1],
    \end{equation}
    uniformly for $z$ in compact sets.
\end{lemma}

\begin{proof}
    We prove by induction the slightly stronger claim that, for any fixed
    $t \ge 0$, \eqref{eq:convMomentRelax} holds when $\epsilon$ is replaced by
    $\epsilon - t$ (for $N$ large enough). For $k=1$, by definition of
    $\la{\M}^{1,s}_z[1]$ and by the first point of
    Proposition~\ref{def:planar-moments},
    \[
        \la{\M}^{1,\epsilon-t}_{z}[1]= 1
        \quad \text{and}\quad
        \M^{1,\epsilon-t}_{L-z}[1] = e^{-w(\epsilon-t)}= 1-o(1),
    \]
    where we used that $w\vep=o(1)$, see Lemma~\ref{lem:spectral}~(ii).

    Let us now assume that the claim holds for all $i \le k$ for some $k
        \ge 1$. We know from~\eqref{eq:bound_ind} that
    \begin{equation} \label{eq:rec:relax}
        \frac{1}{(AN^\gamma)^k} \M_{L-z}^{k+1,\vep-t}[1]
        = \int_0^{\vep-t} \E_{L-z}\bigg[
        e^{-ws} r(L-\tilde{\zeta}_s) \frac{h(L-\tilde{\zeta}_s)}{AN^{\gamma}}
        \sum_{n=1}^{k}
        \frac{\M^{k+1-n,\vep-s}_{L-\tilde{\zeta}_s}[1]}{(AN^\gamma)^{k-n}} \,
        \frac{\M^{n,\vep-s}_{L-\tilde{\zeta}_s}[1]}{(AN^{\gamma})^{n-1}}
        \bigg]\diff s,
    \end{equation}
    with $\tilde{\zeta}_s = L - \zeta_s$. We will let $N \to \infty$
    using the dominated convergence theorem.

    Consider a sequence $(z_N)_N$ converging to $z$, and let
    $\tilde{\zeta}$ and $\la{\zeta}$ be coupled as in
    Lemma~\ref{lem:couplingSpines} under $\P_{L-z_N}$. Under this coupling,
    by induction and by Lemma~\ref{lem:relationReverseEigenelements}, for
    any $s \ge 0$
    \begin{equation*}
        \lim_{N \to \infty}
        e^{-ws} r(L-\tilde{\zeta}_s) \frac{h(L-\tilde{\zeta}_s)}{AN^{\gamma}}
        \sum_{n=1}^{k}
        \frac{\M^{k+1-n,\epsilon-s}_{L-\tilde{\zeta}_s}[1]}{(AN^\gamma)^{k-n}} \,
        \frac{\M^{n,\epsilon-s}_{L-\tilde{\zeta}_s}[1]}{(AN^\gamma)^{n-1}}
        =
        \frac{1}{2}
        \la{h}(\la{\zeta_s}) \sum_{p=1}^{k}
        \la{\M}^{p,\infty}_{\la{\zeta_s}}[1] \,
        \la{\M}^{k+1-p,\infty}_{\la{\zeta_s}}[1]
    \end{equation*}
    in probability under $\P_{L-z_N}$. We now prove that this integrand is
    uniformly integrable (seen as a function of $s$ and of the randomness)
    by showing that it has uniformly bounded second moment. By the
    moment bound in Lemma~\ref{lem:momentBound} and the bound for $h$ in
    Corollary~\ref{lem:eigenBound}, \eqref{eq:easy_bound1}, we can find $C$ such that
    \[
        \forall z \in [0, L],\quad
        e^{-ws} r(L-z) \frac{h(L-z)}{AN^{\gamma}}
        \sum_{n=1}^{k}
        \frac{\M^{k+1-n,\epsilon-s}_{L-z}[1]}{(AN^\gamma)^{k-n}} \,
        \frac{\M^{n,\epsilon-s}_{L-z}[1]}{(AN^\gamma)^{n-1}}
        \le \frac{C e^{(\mu-\beta)(L-z)}}{A N^{\gamma}}
        = C e^{-(\mu-\beta) z}.
    \]
    Since $\mu > \beta$, Lemma~\ref{lem:integrabilityReversedSpine}
    entails that
    \[
        \limsup_{N \to \infty}
        \int_0^{\epsilon} \E_{L-z_N}[ (e^{-(\mu-\beta)
        \tilde{\zeta}_s})^2 ]\diff s
        < \infty.
    \]
    By the dominated convergence theorem (applied to the double integral
    under $\diff s \otimes \P_{L-z_N}$), for any $t \in[0,\vep]$,
    \[
        \lim_{N \to \infty}
        \frac{\M_{L-z_N}^{k+1,\epsilon-t}[1]}{(AN^\gamma)^k}
        =
        \frac{1}{2} \int_0^\infty
        \la{\E}_z\Big[
        \la{h}(\la{\zeta_s}) \sum_{p=1}^{k}
        \la{\M}^{p,\infty}_{\la{\zeta_s}}[1] \,
        \la{\M}^{k+1-p,\infty}_{\la{\zeta_s}}[1]
        \Big]\diff s.
    \]
    By \eqref{eq:recursionReversed}, the right-hand side is
    $\la{\M}^{k+1,\infty}_z[1]$. Since $(z_N)_N$ is an arbitrary
    converging sequence, the convergence also holds uniformly on
    compacts.
\end{proof}

\begin{proof}[Proof of Proposition \ref{lem:estimateBranchingMoment}]
    Recall that for every $p\geq 2$,
    \[
        \int_{(0,\infty)} z^p \Lambda_A(\diff z) = A^{p-\alpha}
        \int_{(0,\infty)} z^p \Lambda(\diff z).
    \]
    Thus, by Proposition~\ref{prop:limitReversed} we only need to prove that, for all $A\geq 1$,
    \begin{multline*}
        \frac{1}{N^{\gamma(p-\alpha)}}
        \int_0^L r(y) h(y)
        \sum_{i=1}^{p-1}   \mathbf{M}_{y}^{i,\eps}\left[ 1 \right]
        \mathbf{M}_{y}^{p-i,\eps}\left[ 1 \right] \Pi(y) \diff y \\
        \xrightarrow[N \to \infty]{}
        \frac{A^{p-\alpha}}{2}
        \int_0^\infty \la{h}(z) \la{\Pi}(z) \sum_{i=1}^{p-1}
        \la{\mathbf{M}}_{z}^{i,\infty}\left[ 1 \right]
        \la{\mathbf{M}}_{z}^{p-i,\infty}\left[ 1 \right] \diff z.
    \end{multline*}
    First,
    \begin{multline}
        \label{eq:estBranchingMoment1}
        \frac{1}{(AN^\gamma)^{p-\alpha}}
        \int_0^L r(y) h(y)
        \sum_{k=1}^{p-1}
        \mathbf{M}_{y}^{k,\eps}\left[ 1 \right]
        \mathbf{M}_{y}^{p-k,\eps}\left[ 1 \right] \Pi(y) \diff y
        \\
        =
        \int_0^{L} \frac{ r(L-z)h(L-z)\Pi(L-z)}{(AN^\gamma)^{2-\alpha}}
        \sum_{k=1}^{p-1}
        \frac{\mathbf{M}_{L-z}^{k,\eps}\left[ 1 \right]}{(AN^\gamma)^{k-1}}
        \frac{\mathbf{M}_{L-z}^{p-k,\eps}\left[ 1 \right]}{(AN^\gamma)^{p-k-1}}
        \diff z.
    \end{multline}
    By Lemma~\ref{lem:comparisonReversed},
    \[
        \lim_{N \to \infty}
        \sum_{k=1}^{p-1}
        \frac{\mathbf{M}_{L-z}^{k,\eps}\left[ 1 \right]}{(AN^\gamma)^{k-1}}
        \frac{\mathbf{M}_{L-z}^{p-k,\eps}\left[ 1 \right]}{(AN^\gamma)^{p-k-1}}
        =
        \sum_{k=1}^{p-1}
        \la{\mathbf{M}}_{z}^{k,\infty}\left[ 1 \right]
        \la{\mathbf{M}}_{z}^{p-k,\infty}\left[ 1 \right]
    \]
    uniformly for $z$ in compact sets. Moreover, by
    Lemma~\ref{lem:relationReverseEigenelements}
    \[
        \lim_{N \to \infty} \frac{r(L-z) h(L-z) \Pi(L-z)}{(AN^\gamma)^{2-\alpha}}
        = \frac{1}{2} \la{h}(z) \la{\Pi}(z)
    \]
    uniformly on compacts. Finally, the moment bound in
    Lemma~\ref{lem:momentBound} and the bound for $h \Pi$ in
    Corollary~\ref{lem:eigenBound} show that
    \[
        \forall z \in [0,L],\quad
        \frac{\M^{k,\epsilon}_{L-z}[1]\M^{p-k,\epsilon}_{L-z}[1]}{(AN^\gamma)^{p-2}}
        \frac{r(L-z) h(L-z) \Pi(L-z)}{(AN^\gamma)^{2-\alpha}}
        \le C \frac{e^{(3\beta-\mu)(L-z)}}{(AN^{\gamma})^{(2-\alpha)}}
        = C e^{-(\mu-3\beta)z}
    \]
    for some $C > 0$. Since $\mu > 3\beta$ under \eqref{Hwp}, the
    result follows by letting $N \to \infty$ in \eqref{eq:estBranchingMoment1}
    using the dominated convergence theorem.
\end{proof}

\subsection{Completing the proof}

We are now ready to prove Theorem~\ref{th:cv_mmm}.

\begin{proof}[Proof of Theorem~\ref{th:cv_mmm}]
    We start by checking the first condition of Theorem~\ref{thm:main-cv}
    by induction on $k$. Namely, that for every  a continuous bounded $F
        \colon \mathbb{U}^*_k \to \R$, we have
    \begin{equation} \label{s}
        \forall k \geq 1, t \ge 0, \quad
        \lim_{N \to \infty} \hat{\mathbf{M}}^{k,t}_{x}[F] =
        \hat{\mathcal{M}}^{k,t}_{A} \otimes
        \Pi_\infty^{\otimes k}[F],
    \end{equation}
    where $\hat{\mathcal{M}}^{k,t}_{A}$ is the $k$-th moment of the
    $\psi_{A}$-mm space with
    \[
        \psi_{A}(\theta) =
        \frac{c_{w}}{A^{\alpha-1}}\theta +\int_{(0,\infty)}
        \big(e^{\theta z}-1-\theta z\big) \Lambda_{A}(\diff z).
    \]
    See Definition~\ref{def:planarMoments} for the recursive definition of
    $\hat{\mathcal{M}}^{k,t}_{A}$. In order for the recursion to work, we
    will actually need to work more and show that the convergence is
    uniform for $x$ on the interval $[0,{\bf c}L]$, for any $\mathbf{c} \in
        (0,1)$.

    \bigskip

    For $k = 1$, $\mathbb{U}_1$ is made of a single element (the null
    distance on a singleton), so that we can identify $\mathbb{U}^*_1$ with
    the set of marks, $\mathbb{U}^*_1 \cong \R_+$. If $F \colon \R_+ \to \R$
    if continuous bounded, Proposition~\ref{def:planar-moments} shows that
    \[
        \hat{\M}^{1,t}_x[F] = e^{-wtN} \E_x\big[ F(\zeta_{Nt}) \big].
    \]
    According to the definition of the moments $\hat{\mathcal{M}}^{k,t}_{A}$
    given in Definition~\ref{def:planarMoments}, the convergence of the moments
    $k=1$ boils down to proving that
    \[
        \lim_{N\to\infty} \ \hat{\M}^{1,t}_x[F] \ = \
        e^{-c_w t / A^{\alpha-1}} \int_0^\infty F(y) \Pi_\infty(y) \diff y.
    \]
    The result is now a straightforward consequence of the convergence of the
    spine process derived in Proposition~\ref{prop:hk} and of
    Lemma~\ref{lem:spectral}~(ii), which shows that $wN \to c_{w} /
    A^{\alpha-1}$ as $N \to \infty$.

    Let us now assume that the result holds up until rank $k-1$ for some
    $k \ge 2$. We first argue that the limit also holds if the process starts
    from the stationary measure $\Pi$ instead of a fixed $x$. We know
    from Lemma~\ref{lem:spectral}~(iii) that $\Pi(y) \to \Pi_\infty(y)$
    pointwise as $N \to \infty$. This convergence also holds in $L^1(\R_+)$
    by Scheffé's lemma. Here we used that, although $\Pi$ is not a
    probability measure under our renormalisation, its total mass
    converges to $1$ by Lemma~\ref{lem:spectral}~(iii).

    Fix $n < k$ and a continuous bounded $\bar{F} \colon \mathbb{U}^*_n
    \to \R$. By our induction assumption, $\hat{\M}^{n,s}_y[\bar{F}]$
    converges uniformly for $y \in [0, \mathbf{c} L]$. Further, since the
    integrand in the following equation is uniformly bounded (by 
    Lemma~\ref{lem:momentBound}),
    \begin{align}
        \hat{\M}^{n, s}_\Pi[\bar F]
         & = \int_0^{\mathbf{c}L} \Pi(y) \hat{\M}^{n,s}_y[\bar F] \diff y
        + \int_{\mathbf{c}L}^L \Pi(y) \hat{\M}^{n,s}_y[\bar F] \diff y \nonumber \\
         & \xrightarrow[N \to \infty]{}
        \int_0^\infty \Pi_\infty(y)
        \left(
        \hat{\mathcal{M}}_{A}^{n,s}\otimes (\Pi_\infty)^{\otimes n}
        \right)[\bar F] \diff y
        = \left(
        \hat{\mathcal{M}}_{A}^{n,s}\otimes (\Pi_\infty)^{\otimes n}
        \right)[\bar F].
        \label{eq:statInitCond}
    \end{align}
    We now consider a functional $G \colon \mathbb{U}^*_k \to \R$ of the form
    \[
        \forall U^* \in \mathbb{U}^*_k,\quad
        G(U^*) = \indic_{\{ c(U) = c\}}
        f(\tau(U)) \prod_{i=1}^{\abs{c}} F_i(U^*_i).
    \]
    Combining Step~1 (Proposition~\ref{lem:approx:1}) with Step~2
    (Proposition~\ref{lem:estimateBranchingMoment}) of the heuristics
    and recalling that $wN \to c_{w} / A^{\alpha-1}$ as above, we deduce that
    \begin{equation} \label{eq:limMoment1}
        \lim_{N \to \infty}
        \hat{\M}^{k,t}_x[G \circ S^{\hat{\epsilon}}]
        - \frac{1}{\abs{c}!} \int_{(0,\infty)} z^{\abs{c}} \Lambda_{A}(\diff z)
        \times
        \int_0^t f(t-s) e^{-c_{w} s / A^{\alpha-1}}
        \prod_{i=1}^{\abs{c}} \hat{\M}_\Pi^{c_i, t-s}[F_i] \diff s \ = \ 0,
    \end{equation}
    uniformly in $x \in [0, \mathbf{c} L]$. By the dominated convergence
    theorem applied to integral against time using \eqref{eq:statInitCond}
    (and the bound on the integrand provided by Lemma~\ref{lem:momentBound}),
    this finally yields that
    \begin{equation} \label{eq:limMoment2}
        \lim_{N \to \infty} \hat{\M}^{k,t}_x[G^{\hat{\epsilon}}]
        = \frac{1}{\abs{c}!} \int_{(0,\infty)} z^{\abs{c}}
        \Lambda_{A}(\diff z)
        \int_0^t f(t-s) e^{-c_{w} s/A^{\alpha-1}}
        \prod_{i=1}^{\abs{c}}
        \big(
        \hat{\mathcal{M}}^{c_i,t-s}_{A} \otimes \Pi_\infty^{\otimes c_i}
        \big)[F_i]\diff s,
    \end{equation}
    uniformly in $x \in [0, \mathbf{c}L]$.
    Finally, the recursive relation on moments of the $\psi_{A}$-mm space in
    Definition~\ref{def:planarMoments} yields
    \begin{equation}
        \label{eq:limMoment3}
        \lim_{N \to \infty} \hat{\M}^{k,t}_x[G\circ S^{\hat{\epsilon}}] \ = \
        \hat{\mathcal{M}}^{k,t}_{A} \otimes \Pi_\infty^{\otimes k}[G]
    \end{equation}
    uniformly for $x \in [0, \mathbf{c}L]$.

    We will complete the proof of our claim by using a straightforward
    adaptation of Lemma~\ref{lem:product2Tree} to marks. Provided that we can
    check point~(ii) of that lemma, \eqref{eq:limMoment3} entails that the
    pushforward of $\hat{\M}^{k,t}_x$ by $S^{\hat{\epsilon}}$ converges to
    $\hat{\mathcal{M}}^{k,t}_A \otimes \Pi^{\otimes k}_\infty$. To see why
    point~(ii) holds, note that by Portmanteau's theorem
    \begin{equation} \label{eq:limMoment4}
        \limsup_{N \to \infty}
        \hat{\M}^{k,t}_x\big( \big\{
        \tau(C^{\hat{\epsilon}}(U)) -
        \max_i \tau\big( U_i\big( C^{\hat{\epsilon}}(U) \big) \big)
        < \eta
        \big\} \big)
        \le
        \hat{\mathcal{M}}^{k,t}_A\big( \big\{
        \tau(U) - \max_i \tau(U_i(U))
        < \eta
        \big\} \big).
    \end{equation}
    By Definition~\ref{def:planarMoments}, the pushforward of
    $\hat{\mathcal{M}}^{k,t}_A$ by the map $\tau$ has a density with respect
    to the Lebesgue measure, from which it is not hard to deduce that
    $\hat{\mathcal{M}}^{k,t}_A(\{\tau(U) = \max_i \tau(U_i(U)) \}) = 0$. This
    entails that the right-hand side of \eqref{eq:limMoment4} vanishes as
    $\eta \to 0$. Finally, note that by definition of $S^{\hat{\epsilon}}$ in
    \eqref{eq:mergedMap},
    \[
        \forall U \in \mathbb{U}_k,\quad
        \max_{i,j \le k} \abs{U_{ij} - S_{ij}^{\hat{\epsilon}}(U)}
        \le \hat{\epsilon}.
    \]
    This readily entails that, for instance, the bounded Lipschitz distance
    between $\hat{\M}^{k,t}_x$ and its pushforward by $S^{\hat{\epsilon}}$
    vanishes since $\hat{\epsilon} \to 0$ as $N \to \infty$. Therefore
    $\hat{\M}^{k,t}_x$ also converges to $\hat{\mathcal{M}}_A^{k,t}
        \otimes \Pi_\infty^{\otimes k}$. This completes the induction and proves
    that the first condition of Theorem~\ref{thm:main-cv} holds for the BBM.
    By Lemma~\ref{prop:spineEquilibriate}, this convergence actually holds 
    uniformly for $x \in [0, \mathbf{c} L]$ as required for the induction
    to work.

    It remains to check the second condition of Theorem~\ref{thm:main-cv}.
    That $h$ converges to $h_\infty$ uniformly on compacts follows by
    Lemma~\ref{lem:spectral}~(iii) and the definition of $h$ in terms of $v$.
    The bound on $h$ was established in \eqref{eq:easy_bound1}. Finally, by
    Proposition~\ref{def:planar-moments}~(i)
    \[
        \hat{\mathbf{M}}^{1,t}_x\Big[ \frac{1}{h} \Big]
        =
        e^{wtN}
        \int_0^L \frac{q_{tN}(x,y)}{h(y)} \diff y
        \sim e^{-c_w t / A^{\alpha-1}} \int_0^L \frac{\Pi(y)}{h(y)} \diff y
        \sim e^{-c_w t / A^{\alpha-1}} \int_0^L \tilde{h}(y) \diff y
    \]
    as $N \to \infty$ by Lemma~\ref{lem:spectral}~(ii) and
    Proposition~\ref{prop:hk} since
    $tN > cL$ for $N$ large enough. This shows that $\hat{\M}^{1,t}[1/h]$
    converges to $e^{-c_w t / A^{\alpha-1}}$ as $N \to \infty$, which is
    equal to $\hat{\mathcal{M}}^{1,t}_A[1]$ by
    Definition~\ref{def:planar-moments}.
\end{proof}

\section{Appendix}

This section collects various results of technical nature that are needed
in the proofs.

\begin{lemma} \label{lem:boundLipschitz}
    Let $(X,d,\mu)$ and $(X',d',\mu')$ be two mm-spaces, and let $\phi
        \colon \R_+^{k\times k} \to \R_+$ be Lipschitz in the sense that
    \[
        \forall (d_{ij}, d'_{ij})_{i,j \le k},\quad
        \abs{\phi\big( (d_{ij})_{i,j} \big) - \phi\big( (d'_{ij})_{i,j} \big)}
        \le L \sum_{i,j=1}^k \abs{d_{ij} - d'_{ij}}.
    \]
    Then, if $\Phi$ is the polynomial associated to $\phi$ we have
    \[
        \abs{\Phi(X,d,\mu) - \Phi(X',d',\mu')} \le (Lk^2+k) \underline{\Box}_1(X,X').
    \]
\end{lemma}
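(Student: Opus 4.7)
The plan is to use the definition of Gromov's box metric $\underline{\Box}_1$ via couplings and relations, as formulated in \cite{janson2020gromov}. Fix any $\epsilon > \underline{\Box}_1(X,X')$. By definition, there exist a finite measure $\pi$ on $X \times X'$ and a measurable relation $R \subseteq X \times X'$ such that: the marginals of $\pi$ coincide with $\mu$ and $\mu'$ (up to a mass defect controlled by $\epsilon$), $\pi(R^c) \le \epsilon$, and
\[
    \forall (x,x'), (y,y') \in R,\quad |d(x,y) - d'(x',y')| \le \epsilon.
\]

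Using $\pi^{\otimes k}$ as a coupling between $\mu^{\otimes k}$ and ${\mu'}^{\otimes k}$, I would write
\[
    \Phi(X,d,\mu) - \Phi(X',d',\mu')
    = \int_{(X\times X')^k} \left[\phi\big(d(\mathbf{v})\big) - \phi\big(d'(\mathbf{v}')\big)\right] \pi^{\otimes k}\big(d(\mathbf{v},\mathbf{v}')\big),
\]
with the shorthand $\mathbf{v} = (v_1,\dots,v_k)$ and $\mathbf{v}' = (v'_1,\dots,v'_k)$, and then split this integral along the event $E := \{(v_i, v'_i) \in R \text{ for all } i \le k\}$ and its complement.

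On $E$, every pair of indices $i,j$ satisfies $|d(v_i, v_j) - d'(v'_i, v'_j)| \le \epsilon$, so the Lipschitz hypothesis on $\phi$ gives
\[
    \left|\phi(d(\mathbf{v})) - \phi(d'(\mathbf{v}'))\right| \le L \sum_{i,j=1}^k \epsilon = L k^2 \epsilon,
\]
contributing at most $L k^2 \epsilon$ to the integral. On $E^c$, a union bound yields $\pi^{\otimes k}(E^c) \le k \pi(R^c) \le k\epsilon$, and the uniform bound on $\phi$ (which may be taken to be $1$ in the applications of this lemma in the paper, or is otherwise absorbed in the constant) gives a contribution bounded by $k\epsilon$. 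Summing the two and letting $\epsilon \downarrow \underline{\Box}_1(X,X')$ yields the stated bound $(Lk^2+k)\underline{\Box}_1(X,X')$.

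The only subtlety, which is not really an obstacle, is the treatment of the mass defect in the marginals of $\pi$: if one insists on working with couplings whose marginals are exactly $\mu$ and $\mu'$, a small correction must be incorporated to absorb the defect of size at most $\epsilon$, but this costs only a linear correction in $\epsilon$ and can be bundled into the $+k$ term of the constant. The two key mechanisms are the simultaneous control of all $k^2$ entries of the distance matrix via the Lipschitz hypothesis (giving the $Lk^2$ term) and the union bound controlling the probability that some coordinate lies outside $R$ (giving the $k$ term).
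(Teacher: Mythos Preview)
Your proposal is correct and follows essentially the same architecture as the paper's proof: split the $k$-fold integral into a ``good'' region where all coordinates lie in a set of small distortion (contributing $Lk^2\epsilon$ via the Lipschitz bound) and a ``bad'' region controlled by a union bound (contributing $k\epsilon$), then let $\epsilon\downarrow\underline{\Box}_1(X,X')$. The only difference is that the paper works with the parametrization formulation of $\underline{\Box}_1$ (Janson, Definition~3.1: measure-preserving maps $\psi,\psi'\colon[0,1]\to X,X'$ and a bad set $W_\epsilon\subseteq[0,1]$), which sidesteps entirely the marginal mass-defect issue you flag; with exact parametrizations one can write $\Phi(X)-\Phi(X')$ directly as an integral over $[0,1]^k$ and bound the bad part by $1-(1-\epsilon)^k\le k\epsilon$. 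Your coupling/relation formulation (Janson, Definition~3.2) is equivalent and leads to the same estimate, but the parametrization route is slightly cleaner here precisely because it avoids the bookkeeping you allude to in your final paragraph. Both arguments tacitly use $\|\phi\|_\infty\le 1$ (or absorb a bound on $\phi$ into the constant) when estimating the bad region.
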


\begin{proof}
    Let $\epsilon > \underline{\Box}_1(X,X')$. By [Janson,
            Definition~3.1] we can find two measure preserving maps $\psi \colon
        [0,1] \to X$ and $\psi' \colon [0,1] \to X'$ and a set $W_\epsilon$
    such that $\Leb(W_\epsilon) \le \epsilon$ and
    \begin{equation*}
        \forall x,y \in [0, 1] \setminus W_\epsilon,\quad
        \abs{d(\psi(x,y)) - d'(\psi'(x,y))} \le \epsilon.
    \end{equation*}
    We now have that
    \begin{align*}
        | \Phi( & X,d,\mu) - \Phi(X',d',\mu') |                                        \\
                & = \abs{ \int_{[0,1]^k} \phi\big( (d(\psi(x_i,x_j)))_{i,j\le k} \big)
            - \phi\big( (d'(\psi'(x_i,x_j)))_{i,j\le k} \big) \diff x_1
        \dots \diff x_k }                                                              \\
                & \le 1-\Leb(\bar{W}_\epsilon^k) + L \int_{\bar{W}_\epsilon^k}
        \sum_{i,j=1}^k \abs{d(\psi(x_i,x_j)) - d'(\psi'(x_i,x_j))} \big)
        \diff x_1 \dots \diff x_k                                                      \\
                & \le 1-(1-\epsilon)^k + Lk^2 \epsilon^k \le (Lk^2+k) \epsilon.
    \end{align*}
    and the result follows by letting $\epsilon \to
        \underline{\Box}_1(X,X')$.
\end{proof}

\begin{lemma} \label{cor:boundShapes}
    Let $(a_k)_{k \ge 1}$ be a non-negative
    sequence and $C \ge 1$ be such that
    \[
        \forall k \ge 2,\quad a_k \le C \sum_{i=1}^{k-1} a_i a_{k-i}.
    \]
    Then, $a_k \le (a_1)^k C^{k-1} 4^k$, $k \ge 1$.
\end{lemma}

\begin{proof}
    The result follows by defining a well-chosen auxiliary sequence,
    namely $w_k = a_k / (C^{k-1} a_1^k)$. We have $w_1 = 1$ and
    \begin{align*}
        w_k
         & = a_k C^{-k+1} a_1^{-k}
        \le C^{-k+1} a_1^{-k} \sum_{i=1}^{k-1} C a_i a_{k-i}       \\
         & = C^{-k+2} \sum_{i=1}^{k-1} C^{i-1}w_i C^{k-i-1}w_{k-i}
        = \sum_{i=1}^{k-1} w_i w_{k-i}.
    \end{align*}
    A simple induction shows that $w_k \le B_k$ where $(B_k)_{k \ge 1}$
    is defined as
    \[
        B_1=1,\quad \forall k \ge 1\quad B_k = \sum_{i=1}^{k-1} B_i B_{k-i}.
    \]
    By noting that $B_k$ follows the same recursion as the number of
    rooted ordered binary trees with $k$ unlabeled leaves, we get the bound
    \[
        \forall k \ge 1,\quad a_k \le a_1^k C^{k-1} \frac{2^k (2k-3)!!}{k!}
    \]
    from which the result follows.
\end{proof}

Finally, let us show this version of Fatou's lemma for vague convergence.

\begin{lemma}[Vague Fatou's lemma] \label{lem:fatouVague}
    Let $(\nu_N)_{N \ge 1}$ be locally finite measures on $(0, \infty)$,
    and suppose that $\nu_N \to \nu$ vaguely on $(0, \infty]$. Then,
    \[
        \angle{\nu, x} \le \liminf_{N \to \infty} \angle{\nu_N, x}.
    \]
\end{lemma}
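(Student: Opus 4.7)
The plan is to use a standard approximation-by-compactly-supported-functions argument exploiting that vague convergence on the locally compact space $(0,\infty]$ tests against continuous functions which may be nonzero at the point at infinity but must vanish in a neighborhood of $0$.

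First I would construct an approximating family of test functions. For $\epsilon \in (0,1)$ and $M > 1$, let $\chi_\epsilon \colon (0,\infty) \to [0,1]$ be a continuous cutoff with $\chi_\epsilon = 0$ on $(0,\epsilon/2]$, $\chi_\epsilon = 1$ on $[\epsilon,\infty)$, and define
\[
\phi_{\epsilon,M}(x) = \chi_\epsilon(x)\,(x \wedge M), \qquad x \in (0,\infty],
\]
with the convention $\phi_{\epsilon,M}(\infty) = M$. Each $\phi_{\epsilon,M}$ is continuous on $(0,\infty]$, bounded, and has support contained in $[\epsilon/2,\infty]$, which is compact in the one-point compactification $(0,\infty]$. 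Hence $\phi_{\epsilon,M} \in C_c((0,\infty])$, and vague convergence yields
\[
\angle{\nu,\phi_{\epsilon,M}} = \lim_{N\to\infty}\angle{\nu_N,\phi_{\epsilon,M}}.
\]

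Next I would compare with $x$ itself. Since $0 \le \phi_{\epsilon,M}(x) \le x$ pointwise on $(0,\infty)$, we get $\angle{\nu_N,\phi_{\epsilon,M}} \le \angle{\nu_N,x}$, and therefore
\[
\angle{\nu,\phi_{\epsilon,M}} \le \liminf_{N\to\infty}\angle{\nu_N,x}.
\]
The key observation is now that $\phi_{\epsilon,M}(x)$ increases monotonically to $x$ as $\epsilon \downarrow 0$ and $M \uparrow \infty$. Monotone convergence applied to the measure $\nu$ then gives $\angle{\nu,\phi_{\epsilon,M}} \uparrow \angle{\nu,x}$, and taking the supremum over $\epsilon, M$ in the previous display yields the desired bound. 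There is no genuine obstacle: the only subtlety worth checking is that $\nu$ may have infinite first moment, in which case the inequality asserts $+\infty \le \liminf_N \angle{\nu_N,x}$, but this follows from the same approximation since $\angle{\nu,\phi_{\epsilon,M}}$ still blows up along the supremum.
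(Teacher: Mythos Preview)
Your proof is correct and follows essentially the same approximation-from-below strategy as the paper: cut off near zero, use vague convergence on the truncated integrand, then let the cutoff vanish via monotone convergence. The only difference is that the paper uses the indicator $\indic_{\{x \ge \delta\}}$ and invokes a Fatou lemma for weak convergence (Kallenberg, Lemma~4.11) to handle the unbounded function $x$ on $[\delta,\infty]$, whereas you additionally truncate at $M$ so that $\phi_{\epsilon,M}\in C_c((0,\infty])$ and vague convergence applies directly---this makes your argument slightly more self-contained at the cost of one extra limit.
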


\begin{proof}
    Fix some $\delta > 0$ such that $\nu(\{ x : x=\delta \}) = 0$. By
    Fatou's lemma (see the version for weak convergence in
    \cite[Lemma~4.11]{kallenberg2002foundations}) we have that
    \[
        \angle{\nu, \indic_{\{ x \ge \delta \}} x}
        \le
        \liminf_{N \to \infty}
        \angle{\nu_N, \indic_{\{ x \ge \delta \}} x}
        \le
        \liminf_{N \to \infty}
        \angle{\nu_N, x}.
    \]
    The result is obtained by letting $\delta \to 0$.
\end{proof}

\begin{lemma}[Vague method of moments] \label{lem:vagueMethodMoments}
    Suppose that $(\nu_n)_{n \ge 0}$ is a sequence of measures on $\R_+$
    such that
    \[
        \forall k \ge 1,\quad \lim_{n \to \infty} \int_{\R_+} x^k
        \nu_n(\diff x) =
        m_k,
        \qquad \limsup_{k \to \infty} \frac{m_k^{1/k}}{k} < \infty.
    \]
    Then $(\nu_n)_{n \ge 1}$ converges vaguely as $n \to \infty$ to the
    unique measure $\nu$ such that $\int x^k \nu(\diff x) = m_k$.
\end{lemma}

\begin{proof}
    Consider the measure $\widetilde{\nu}_n(\diff x) = x \nu_n(\diff x)$.
    Clearly,
    \[
        \forall k \ge 0,\quad \lim_{n \to \infty}
        \int_{\R_+} x^k \widetilde{\nu}_n(\diff x) = m_{k+1},
        \qquad
        \limsup_{k \to \infty} \frac{m_{k+1}^{\frac{1}{k}}}{k}
        =
        \limsup_{k \to \infty} \big( 1 + \frac{1}{k} \big)
        m_{k+1}^{\frac{1}{k(k+1)}} \frac{m_{k+1}^{\frac{1}{k+1}}}{k+1}
        < \infty.
    \]
    Therefore, by the method of moments, there exists a measure
    $\widetilde{\nu}$ with moments $(m_{k+1})_{k \ge 0}$ and such that
    $(\widetilde{\nu}_n)_{n \ge 1}$ converges to $\widetilde{\nu}$ weakly.
    Define $\nu(\diff x) = (1/x) \widetilde{\nu}(\diff x)$. If $\phi \colon \R_+
    \to \R$ is continuous bounded, and $0$ is not in the support of $\phi$, then 
    $x \mapsto \phi(x)/x$ is continuous bounded and
    \[
        \int_{\R_+} \phi(x) \nu_n(\diff x)
        = \int_{\R_+} \frac{\phi(x)}{x} \widetilde{\nu}_n(\diff x)
        \longrightarrow
        \int_{\R_+} \frac{\phi(x)}{x} \widetilde{\nu}(\diff x)
        = \int_{\R_+} \phi(x) \nu(\diff x),
        \qquad \text{as $n \to \infty$.}
    \]
    In other words, $(\nu_n)_{n \ge 1}$ converges vaguely to $\nu$.
\end{proof}

\bibliographystyle{plain}
\bibliography{biblio_semipushed2.bib}

\end{document}